\providecommand{\keywords}[1]
{\small\textbf{Keywords ---} #1}
\theoremstyle{plain}
\newtheorem{theorem}{Theorem}[section] 
\newtheorem{lemma}{Lemma}[section]
\newtheorem{corollary}{Corollary}[section]
\newtheorem{assumption}{Assumption}[]
\def\sqrtexplained#1{%
	\begingroup
	\sbox0{$#1$}
	\def\underbrace##1_##2{##1}
	\sbox2{$#1$}
	\dimen0=\wd0 \advance\dimen0-\wd2
	\mathrlap{\sqrt{\phantom{\displaystyle#1}\kern\dimen0 }}
	\hphantom{\sqrt{\vphantom{\displaystyle#1}}}
	\endgroup
	#1}
\date{} 
\DeclarePairedDelimiter{\chevrons}{\langle}{\rangle}
\DeclareMathOperator*{\linop}{\mathfrak{L}}
\DeclareMathOperator*{\expectation}{\mathbb{E}}
\DeclareMathOperator{\expectationc}{\mathbb{E}_j}
\newcommand{\expec}[1]{\expectation\left[#1\right]}
\newcommand{\expecc}[1]{\expectationc\left[#1\right]}
\newcommand{\mse}[1]{\textrm{MSE}\left(#1\right)}
\newcommand{\cost}[1]{\text{Cost}\left(#1\right)}
\newcommand{\abs}[1]{\left|#1\right|}
\newcommand{\norm}[1]{\left\|#1\right\|}
\DeclareMathOperator{\dif}{d\!} 
\DeclareMathOperator{\lebesgue}{L}
\DeclareMathOperator*{\argmin}{arg\,min}
\newcommand{\risk}{\ensuremath{\mathcal{R}}}
\newcommand{\obj}{\ensuremath{\mathcal{J}}}
\newcommand{\objest}{\hat{\obj}^{j}}
\newcommand{\objt}{\widetilde{\obj}^{j}}
\newcommand{\objgrad}{\nabla \obj}
\newcommand{\objestgrad}{\tilde{\nabla} \objest}\newcommand{\lagrangian}{\ensuremath{\mathcal{L}}}
\newcommand{\qoi}{\ensuremath{Q}}
\newcommand{\linf}{\mathrm{L}^{\infty}}
\newcommand{\lp}{\mathrm{L}^{p}}
\newcommand{\ellq}{\mathrm{L}^{q}}
\newcommand{\sltwo}{l^{2}}
\newcommand{\measure}{\ensuremath{\mathbb{P}}}
\newcommand{\setR}{\ensuremath{\mathbb{R}}}
\newcommand{\setN}{\ensuremath{\mathbb{N}}}
\newcommand{\abbrie}{i.e\@ifnextchar.{}{.\@}}
\newcommand{\abbreg}{e.g\@ifnextchar.{}{.\@}}
\newcommand{\abbrae}{a.e\@ifnextchar.{}{.\@}}
\newcommand{\abbrst}{s.t\@ifnextchar.{}{.\@}}
\newcommand{\interpone}[1]{\mathcal{S}_n' \left(#1\right)}
 \newcommand{\interp}[1]{\mathcal{S}_n \left(#1\right)}
\newcommand{\thetab}{\bm\theta}
\newcommand{\Phieone}{\hat{\Phi}_L'}
\newcommand{\Phietwo}{\hat{\Phi}_L''}
\newcommand{\Psieone}{\hat{\Psi}_L'}
\newcommand{\Psiekone}{\hat{\Psi}_{L,k}'}
\newcommand{\Psik}{\Psi_k}
\newcommand{\Phie}{\hat{\Phi}_L}
\newcommand{\Psie}{\hat{\Psi}_L}
\newcommand{\Psiek}{\hat{\Psi}_{L,k}}
\newcommand{\Phione}{\Phi'}
\newcommand{\Psione}{\Psi'}
\newcommand{\Psikone}{\Psi_k'}
\newcommand{\quant}{q_{\tau}}
\newcommand{\cvar}{c_{\tau}}
\newcommand{\err}{\text{e}}
\newcommand{\indicator}{\mathbbm{1}}
\newcommand{\errest}{\hat{\err}}
\DeclarePairedDelimiter{\sqbbracket}{\llbracket}{\rrbracket}
\newcommand{\Zint}[2]{\sqbbracket{#1,#2}}
\newcommand{\tavg}[2][{}]{\chevrons{#2}_{#1}}
\newcommand{\deriv}[3][{}]{\frac{\dif^{#1}#2}{\dif{}#3^{#1}}}
\newcommand{\inner}[1]{\left\langle #1 \right\rangle}
\newtheorem{proposition}{Proposition}[section]
\newacronym{qoi}{QoI}{Quantity of Interest}
\newacronym{pde}{PDE}{Partial Differential Equation}
\newacronym{ode}{ODE}{Ordinary Differential Equation}
\newacronym{kde}{KDE}{Kernel Density Estimation}
\newacronym{mlmc}{MLMC}{Multi-Level Monte Carlo}
\newacronym{mc}{MC}{Monte Carlo}
\newacronym{cmlmc}{CMLMC}{Continuation MLMC}
\newacronym{mimc}{MIMC}{Multi-Index Monte Carlo}
\newacronym{mfmc}{MFMC}{Multi-Fidelity Monte Carlo}
\newacronym{mse}{MSE}{Mean Squared Error}
\newacronym{cdf}{CDF}{Cumulative Distribution Function}
\newacronym{pdf}{PDF}{Probability Density Function}
\newacronym{cvar}{CVaR}{Conditional-Value-at-Risk}
\newacronym{var}{VaR}{Value-at-Risk}
\newacronym{ouu}{OUU}{Optimization Under Uncertainty}
\newacronym{uq}{UQ}{Uncertainty Quantification}
\newacronym{sde}{SDE}{Stochastic Differential Equation}
\newacronym{rhs}{RHS}{Right Hand Side}
\newacronym{lhs}{LHS}{Left Hand Side}
\newacronym{amgd}{AMGD}{Alternating Minimisation-Gradient Descent}
\title{Gradient-based optimisation of the conditional-value-at-risk using the multi-level Monte Carlo method}
\author[1]{Sundar Ganesh\thanks{sundar.ganesh@epfl.ch}}
\author[1]{Fabio Nobile\thanks{fabio.nobile@epfl.ch, corresponding author}}
\affil[1]{\small{Institute of Mathematics, {\'E}cole Polytechnique F{\'e}d{\'e}rale de Lausanne, 1015 Lausanne, Switzerland}}
\begin{document}

\maketitle
\vspace{-0.25in}

\begin{abstract}
In this work, we tackle the problem of minimising the Conditional-Value-at-Risk (CVaR) of output quantities of complex differential models with random input data, using gradient-based approaches in combination with the Multi-Level Monte Carlo (MLMC) method.
In particular, we consider the framework of multi-level Monte Carlo for parametric expectations introduced in \cite{krumscheid2017multilevel} and propose modifications of the MLMC estimator, error estimation procedure, and adaptive MLMC parameter selection to ensure the estimation of the CVaR and sensitivities for a given design with a prescribed accuracy.
We then propose combining the MLMC framework with an alternating inexact minimisation-gradient descent algorithm, for which we prove Q-linear convergence in the optimisation iterations under the assumptions of strong convexity and Lipschitz continuity of the gradient of the objective function.
We demonstrate the performance of our approach on two numerical examples of practical relevance, which evidence the same optimal asymptotic cost-tolerance behaviour as standard MLMC methods for fixed design computations of output expectations.
\end{abstract}

\noindent \keywords{Multilevel Monte Carlo Methods, VaR, CVaR, Uncertainty Quantification, Optimisation Under Uncertainty, Gradient Descent.}

\section{Introduction}\label{sec:introduction}
Optimisation algorithms play an important role across various scientific and engineering fields as valuable design tools.
The key goal of optimisation is to find the best values of certain parameters (design variables) of a model, typically a differential model such as a \gls{pde}, used to predict the behaviour of a certain system, such that a desired output \gls{qoi} of the model is optimised.
Such differential models usually also include various other input parameters besides the design variable, which may or may not be fully characterised.
There is an increasing interest in the computational science and engineering community to treat such parameters as random variables to reflect their uncertainty, either due to a lack of knowledge or to some intrinsic variability.
As a result, the output \gls{qoi} being optimised also becomes a random variable. 
Naively optimising the system for only one particular value of the inputs (e.g., the nominal value) can lead to a design that is not robust enough to the uncertainties in the system.
A classical example is of civil engineering structures designed to minimise structural loads for moderate wind conditions, which are then unable to withstand local wind gusts or storms \cite{kodakkal2022risk}.

The field of \gls{pde}-constrained \gls{ouu} seeks to characterise the randomness of the output \gls{qoi} of the \gls{pde} using summary statistics such as moments, quantiles, etc., and optimise the summary statistic instead of the \gls{qoi} directly.
In particular, in risk-averse \gls{pde}-constrained optimisation, one aims at favouring designs with acceptable performance also in extreme conditions.
In this case, the summary statistic, often called a risk-measure, should quantify the importance that is given in the design process to unfavourable scenarios.
The reader is referred to \cite{shapiro2009lectures} for a comprehensive review of several risk-measures and their main properties.
An important class of risk-measures is that of coherent risk-measures \cite{ruszczynski2006optimization,artzner1999coherent}, which exhibit favourable properties such as monotonicity and convexity.

In this work, we focus on the so-called \gls{cvar} \cite{rockafellar2002conditional}, which corresponds to the expectation of the output \gls{qoi} conditional on being above a given quantile (referred to as the \gls{var} in the finance literature), and is a widely used coherent risk-measure.
To be more specific, let $\qoi(z) \in \setR$ denote the random \gls{qoi}, which depends on the design parameter $z \in \setR^d$.
We denote by $\mathbb{E}[X]$ the expected value of a random variable $X$, and by $\cvar(z)$ the \gls{cvar} of $\qoi(z)$ of significance $\tau \in [0,1]$, i.e., $\cvar(z) \coloneqq \expec{\qoi(z) | \qoi(z) \geq \quant}$, where $\quant$ is the $\tau$-quantile of $\qoi(z)$.
It was demonstrated in \cite{rockafellar2002conditional} that $\cvar(z)$ could be written in the following form under certain conditions on the distribution of $\qoi(z)$:
\begin{align}
\cvar(z) = \min_{\theta \in \setR} \big\{\Phi(\theta;z) \coloneqq \expec{\phi(\theta, \qoi(z))}\big\} ,\quad \phi(\theta, \qoi ) \coloneqq \theta + \frac{(\qoi-\theta)^+}{1-\tau},\label{eq:cvar_pexp_def}
\end{align}
where we denote with $X^+$ the positive part of $X$; namely $X^+ \coloneqq \max(0,X)$.
In this work, we consider the following problem of penalised \gls{cvar} minimisation:
\begin{align}
\obj^* &= \min_{z \in \setR^d}\left\{ \cvar(z) + \kappa \norm{z-z_{ref}}_{\sltwo}^2 \right\},\label{eq:opt_form_orig}\\
&= \min_{\substack{z \in \setR^d \\ \theta \in \setR}} \bigg\{ \obj(\theta,z) \coloneqq \Phi(\theta;z) + \kappa \norm{z-z_{ref}}_{\sltwo}^2 \bigg\},\label{eq:opt_form_combined_spec}
\end{align}
where we have added a term penalising deviation of the design $z$ from a preferred design $z_{ref}$.
In Eq.~\eqref{eq:opt_form_combined_spec}, the parameter $\kappa$ controls the strength of the penalisation, and $\norm{\cdot}_{\sltwo}$ denotes the Euclidean norm.
In particular, we will consider Monte Carlo type approximations of problem~\eqref{eq:opt_form_combined_spec}, and since evaluating the objective function and its sensitivities at a given design $z$ requires the solution of a costly \gls{pde} many times, we will accelerate the Monte Carlo estimation by multilevel strategies following the well established \gls{mlmc} paradigm \cite{giles2015multilevel}, which has been shown to provide significant performance improvements in comparison to classical Monte Carlo methods for estimating various summary statistics of output \gls{qoi} of differential models \cite{giles2008multilevel,giles2015multilevel,hoel2012adaptive,Pisaroni_mlmcPart1_pre,collier2014continuation}.

Two broad approaches can be used to solve problem~\eqref{eq:opt_form_combined_spec}; namely, gradient-free and gradient-based methods.
Evolutionary algorithms, a type of gradient-free method, were used in combination with Monte Carlo estimators for \gls{pde}-constrained \gls{cvar} minimisation in \cite{quagliarella2020risk, quagliarella2019value}.
A genetic algorithm was also used in combination with \gls{mlmc} estimators in \cite{pisaroni2019continuation}.
Multiple different risk-measures, including the \gls{cvar}, were estimated, and the framework was applied to aerodynamic shape optimisation problems.
The authors of \cite{chaudhuri2020multifidelity} proposed a multifidelity Monte Carlo estimator for the \gls{cvar} based on cross-entropy methods combined with importance sampling.
This approach was used in \cite{peherstorfer2018multifidelity} in combination with gradient-free optimisation algorithms to minimise the \gls{cvar}. 
However, gradient-free algorithms typically have slower rates of convergence in comparison to gradient-based methods and involve multiple expensive evaluations of the objective function.
We propose instead the use of gradient based algorithms, combined with \gls{mlmc} estimators, to compute sensitivities of the objective function in problem~\eqref{eq:opt_form_combined_spec}.
In particular, the \gls{mlmc} estimators developed in this work rely on the framework of parametric expectations \cite{krumscheid2017multilevel} and extend the work in \cite{Ganesh2022a} to the computation of \gls{cvar} sensitivities, addressing the corresponding challenges as outlined hereafter.
We mention that gradient-based methods have also been used for other risk-measures and sampling strategies in the context of \gls{pde}-constrained \gls{ouu} \cite{guth2022parabolic,van2019robust,martin2019multilevel}. 

The computation of the sensitivities of the \gls{cvar} $\cvar(z)$ with respect to the extended design variables $z$ and $\theta$ typically requires the estimation of expectations of the form $\expec{(\qoi(z)-\theta)^+}$ and $\expec{\indicator_{\qoi(z) \geq \theta}f(z)}$ for suitable design-dependent random variables $f(z)$.
Although $\expec{(\qoi(z)-\theta)^+}$ and $\expec{\indicator_{\qoi(z) \geq \theta}f(z)}$ can be shown to be differentiable in $\theta$ and $z$ \cite{hong2009simulating, hong2011monte} under some conditions on the distribution of $\qoi(z)$, sample- or quadrature-based approximations of these expectations are typically not differentiable and may require some additional treatment.
One possibility is to directly use the non-differentiable estimations in combination with non-smooth optimisation techniques that use sub-gradient information.
For example, the work in \cite{lim2010portfolio} uses a combination of smooth and non-smooth optimisation techniques, using sub-gradients computed using Monte Carlo estimators, to minimise the \gls{cvar}.
Alternatively, one could construct smoothed versions of the maximum/indicator functions, with sufficient regularity such that sample-based approximations are still differentiable.
For example, a regularised version of the \gls{cvar} was constructed in \cite{kouri2016risk}, with second order differentiability, and optimised successfully using a trust-region method.
However, although regularised or smoothed versions of the \gls{cvar} can be constructed with adequate differentiability, this property is lost in the limit of vanishing smoothing, as is required when the algorithm is close to the optimum.
The method proposed in \cite{krumscheid2017multilevel,Ganesh2022a} offers an alternative to \gls{cvar} regularisation.
In these works, the quantity $\expec{(\qoi(z)-\theta)^+}$ is estimated directly using an \gls{mlmc} estimator at a set of points in $\theta$, all sharing the same realisations of $\qoi(z)$, followed by a cubic spline interpolation over the pointwise evaluations thus obtained.
Derivatives such as $\expec{\indicator_{\qoi(z)\geq\theta}f(z)}$ are then approximated using derivatives of the cubic spline.
We propose to follow the above path in this work.
As was discussed in \cite{Ganesh2022a}, directly using a naive \gls{mlmc} estimator to estimate $\expec{\indicator_{\qoi(z)\geq\theta}f(z)}$ causes non-optimal \gls{mlmc} complexity behaviour.
By constructing an \gls{mlmc} estimator of $\expec{(\qoi(z)-\theta)^+f(z)}$ and numerically differentiating in $\theta$ instead, the approach in \cite{Ganesh2022a} ameliorates this issue and preserves the same optimal complexity behaviour of the \gls{mlmc} method as predicted for estimating $\expec{\qoi(z)}$.
Lastly, since the \gls{mlmc} estimator proposed in \cite{krumscheid2017multilevel,Ganesh2022a} automatically provides an approximation $\hat{\obj}(\cdot,z)$ of the function $\theta \mapsto \obj(\theta,z)$ at a given design $z$, we propose in this work to use an optimisation algorithm in which, at each iteration, gradient steps are taken only in the design variable $z$, whereas exact optimisation in $\theta$ is performed using the surrogate $\hat{\obj}(\cdot,z)$.
Such an algorithm, introduced in \cite{exaqute_d6.3}, was applied in combination with the Monte Carlo estimation of a regularised version of the \gls{cvar} in \cite{beiser2020adaptive}.

The main contributions of this work are as follows.
We propose novel expressions for the sensitivity of the objective function defined in Eq.~\eqref{eq:opt_form_combined_spec} in terms of parametric expectations, thus allowing us to use and extend the framework in \cite{Ganesh2022a} to build cost optimal adaptive \gls{mlmc} estimators for those sensitivities with error control.
We then propose to use \gls{mlmc} sensitivity estimators within an \gls{amgd} algorithm, analogous to the one proposed in \cite{exaqute_d6.3,beiser2020adaptive}, where gradient steps are taken in the design variable $z$ whereas exact optimisation is performed in $\theta$ using an \gls{mlmc}-constructed surrogate $\hat{\obj}$ of $\obj$.
The accuracy of the surrogate and sensitivity estimation is increased over the optimisation iterations and is set proportional to the gradient norm.
Following closely the analysis in \cite{beiser2020adaptive}, we propose a convergence result for our algorithm under the assumption that the objective function $\obj(\theta,z)$ is strongly convex with Lipschitz continuous gradients.

The structure of this paper is as follows.
We present the problem formulation in Section~\ref{sec:prob_form}, for a problem of penalised \gls{cvar} minimisation of the form in Eq.~\eqref{eq:opt_form_combined_spec}.
The novel expression for the gradients in terms of the parametric expectations is also presented in Section~\ref{sec:prob_form}.
In Section~\ref{sec:conv_novel_alg}, we propose the \gls{amgd} algorithm with inexact gradient and objective function estimation and demonstrate its convergence.
Section~\ref{sec:grad_est_mlmc} discusses the novel \gls{mlmc} estimators, error estimation procedure, and adaptive \gls{cmlmc}-type hierarchy selection for the gradients of $\obj(\theta,z)$.
In addition, it presents a final \gls{cmlmc}-\gls{amgd} algorithm.
Lastly, in Section~\ref{sec:results}, we demonstrate the above optimisation algorithm and \gls{mlmc} procedure on two problems of interest.
The first is a two-dimensional oscillator, typically used to model oscillatory phenomena in excitable media. 
The second is a more applied problem of pollutant transport modelling. 
We demonstrate that the procedure proposed in this work performs well and reflects the theoretical results presented in Sections~\ref{sec:prob_form} and~\ref{sec:conv_novel_alg}.

\section{Problem formulation}\label{sec:prob_form}
Let $(\Omega, \mathcal{F}, \measure)$ denote a complete probability space, $\omega \in \Omega$ an elementary random event and $z \in \setR^d$ the vector of design variables.
We denote by $\qoi(z,\omega) \in \setR$ the random \gls{qoi}, typically a functional of the solution to an underlying differential model with random input $\omega$ and design $z$.
We are interested in minimising the \gls{cvar} $\cvar(z)$ of the random variable $\qoi(z,\cdot)$ over the designs $z \in \setR^d$, as indicated in Eq.~\eqref{eq:opt_form_orig}, following the formulation presented in \cite{rockafellar2002conditional}. 
To this end, we first introduce the following assumptions on the random variable $\qoi(z,\cdot)$.
\begin{assumption}\label{assume:qoi_prop}
For any $z \in \setR^d$:
\begin{enumerate}[label=(\roman*)]
\item $\qoi(z,\cdot)$ is a random variable in $\lp(\Omega, \setR)$ for some $p \in [1,\infty)$.
\item The measure of $\qoi(z,\cdot)$ admits a probability density function, i.e., the measure of $\qoi(z,\cdot)$ is free of atoms.
We denote by $\Gamma$ the subset of random variables in $\lp(\Omega, \setR)$ that are free of atoms, and hence, $\qoi(z,\cdot) \in \Gamma \subset \lp(\Omega, \setR)$. \label{assume:qoi_prop:point:atom_free}
\item There exists a positive random variable $K$, possibly dependent on $z$, such that $\expec{K} < \infty$ and
\begin{align}
\abs{\qoi(z+\Delta z, \cdot)-\qoi(z, \cdot)} \leq K(\cdot) \norm{\Delta z}_{\sltwo},
\end{align}
for any $\Delta z \in \setR^d$ close enough to $0$ (restated here from \cite{hong2009simulating, hong2011monte}).\label{assume:qoi_prop:point:diff_bound}
\item For almost every $\omega \in \Omega$, the mapping $z \mapsto \qoi(z,\omega)$ is differentiable in $\setR^d$ and the corresponding vector of partial derivatives $\qoi_z(z,\cdot) = \left[\qoi_{z^1}(z,\cdot),...,\qoi_{z^d}(z,\cdot)\right]^T$ of $\qoi$ with respect to the components $z^k$ of $z,\; k \in \{1,...d\}$, is a random variable in $\lp(\Omega, \setR^d)$.\label{assume:qoi_prop:point:qoi_sens}
\end{enumerate}
\end{assumption}

To quantify the tails of $\qoi(z,\cdot)$, we first define the $\tau$-\gls{var} $\quant(z)$, alternatively known as the $\tau$-quantile, of significance $\tau \in (0,1)$ as follows:
\begin{align}
\quant(z) \coloneqq \min \{\theta \in \setR | \expec{\indicator_{\qoi(z,\cdot)\leq \theta}} \geq \tau \}.
\end{align}
The $\tau$-\gls{cvar} $\cvar(z)$ is defined as the expected value of $\qoi(z,\cdot)$ in the tail above and including the $\tau$-\gls{var} $\quant(z)$:
\begin{align}
\cvar(z) \coloneqq \expec{\qoi(z,\cdot) | \qoi(z,\cdot) \geq \quant(z)}.
\end{align}
As was described in Section~\ref{sec:introduction}, \cite{rockafellar2002conditional} proposed that $\cvar(z)$ could be written in the form in Eq.~\eqref{eq:cvar_pexp_def} for a random variable $\qoi(z,\cdot)$ satisfying Assumption~\ref{assume:qoi_prop}.\ref{assume:qoi_prop:point:atom_free}.

In this work, we extensively use the concept of parametric expectations.
In particular, let us introduce the function (parametric expectation) $\Phi:\setR \times \setR^d \to \setR$ as:
\begin{align}
\Phi(\theta;z) \coloneqq \expec{\phi(\theta,\qoi(z, \cdot))}, \quad \theta \in \setR,\; z \in \setR^d, \label{eq:parest_form}
\end{align}
with $\phi:\setR \times \setR \to \setR$ given by:
\begin{align}
\phi(\theta, \qoi) &\coloneqq \theta + \frac{(\qoi-\theta)^+}{1-\tau}, \quad\theta \in \setR,\;\qoi \in \setR. \label{eq:phi_form}
\end{align}
The introduction of the parametric expectation $\Phi$ has the advantage that the $\tau$-\gls{var} $\quant(z)$ and the $\tau$-\gls{cvar} $\cvar(z)$ of any significance $\tau$ can be obtained by simple post-processing of $\Phi$ as:
\begin{align}
\quant(z) = \argmin_{\theta \in \setR}\Phi(\theta;z), \qquad \cvar(z) &= \min_{\theta \in \setR}\Phi(\theta;z) = \Phi(\quant(z);z).\label{eq:stat_defs}
\end{align}
The framework of parametric expectations allows us to write the penalised \gls{cvar} minimisation problem in Eq.~\eqref{eq:opt_form_orig} as a combined minimisation over $\theta$ and $z$ as in Eq.~\eqref{eq:opt_form_combined_spec}.
The problem is restated below for reference:
\begin{align}
\obj^* &= \min_{\substack{z \in \setR^d \\ \theta \in \setR}} \bigg\{ \obj(\theta,z) \coloneqq \Phi(\theta;z) + \kappa \norm{z-z_{ref}}_{\sltwo}^2 \bigg\}. \label{eq:opt_form_combined}
\end{align}
For the remainder of this work, we address the challenge of solving problem~\eqref{eq:opt_form_combined}.
The combined objective function $\obj(\theta,z)$ has several properties that, when combined with the properties of $\qoi$ in Assumption~\ref{assume:qoi_prop}, have useful implications for gradient based optimisation techniques.
We first discuss the differentiability of $\obj(\theta, z)$.
Theorem~\ref{thm:q_t_diff} below gives a result on Fr\'echet differentiability of the \gls{cvar}.
\begin{theorem}\label{thm:q_t_diff}
Let $\linop(X,Y)$ denote the space of bounded linear operators between the normed vector spaces $X$ and $Y$. 
We define the function $\risk: \setR \times \lp(\Omega;\setR) \to \setR$ as follows:
\begin{align}
\risk(\theta, \qoi) \coloneqq \theta + \frac{\expec{(\qoi-\theta)^+}}{1-\tau} = \expec{\phi(\theta,\qoi)}.
\end{align}
Then, $\risk(\theta,\qoi)$ is jointly Fr\'echet differentiable in $\setR  \times  \Gamma$, with Fr\'echet derivative $D\risk(\theta, \qoi) \in \linop(\setR \times \lp(\Omega, \setR), \setR)$ at the point $(\theta, \qoi) \in \setR \times \Gamma$ in the direction $(\delta \theta, \delta \qoi) \in \setR \times \lp(\Omega, \setR)$ given by:
\begin{align}
D \risk (\theta, \qoi) (\delta \theta, \delta \qoi) = \left(1 - \frac{\expec{\indicator_{\qoi \geq \theta}}}{1-\tau}\right)\delta \theta + \frac{\expec{\indicator_{\{\qoi \geq \theta\}}\delta \qoi}}{1-\tau}. \label{eq:J_frechet}
\end{align}
\end{theorem}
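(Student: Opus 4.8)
The plan is to first isolate the only nonlinear piece of $\risk$. Writing $\risk(\theta,\qoi)=\theta+\frac{1}{1-\tau}\,g(\theta,\qoi)$ with $g(\theta,\qoi)\coloneqq\expec{(\qoi-\theta)^+}$, the map $(\theta,\qoi)\mapsto\theta$ is bounded linear and hence equals its own Fréchet derivative, so it suffices to show $g$ is jointly Fréchet differentiable on $\setR\times\Gamma$ with derivative $Dg(\theta,\qoi)(\delta\theta,\delta\qoi)=\expec{\indicator_{\{\qoi\geq\theta\}}(\delta\qoi-\delta\theta)}$; then $\delta\theta+\frac{1}{1-\tau}Dg(\theta,\qoi)(\delta\theta,\delta\qoi)$ is exactly \eqref{eq:J_frechet}. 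That this candidate is a bounded operator $\setR\times\lp(\Omega,\setR)\to\setR$ follows from $\abs{D\risk(\theta,\qoi)(\delta\theta,\delta\qoi)}\leq C\abs{\delta\theta}+\frac{1}{1-\tau}\norm{\indicator_{\{\qoi\geq\theta\}}}_{\ellq}\norm{\delta\qoi}_{\lp}$ with $C=\abs{1-(1-\tau)^{-1}\expec{\indicator_{\{\qoi\geq\theta\}}}}$, using Hölder's inequality and the fact that the indicator has $\ellq$-norm at most $1$ since $\measure$ is a probability measure; thus $D\risk(\theta,\qoi)\in\linop(\setR\times\lp(\Omega,\setR),\setR)$.

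Fix $(\theta,\qoi)\in\setR\times\Gamma$. I would change variables by setting $Y\coloneqq\qoi-\theta$, whose law is again atom-free so $\measure(Y=0)=0$, and, for a perturbation $(\delta\theta,\delta\qoi)$, $\delta Y\coloneqq\delta\qoi-\delta\theta$, noting $\norm{\delta Y}_{\lp}\leq\norm{\delta\qoi}_{\lp}+\abs{\delta\theta}$ because $\norm{1}_{\lp(\Omega)}=1$. Since $g(\theta+\delta\theta,\qoi+\delta\qoi)-g(\theta,\qoi)=\expec{(Y+\delta Y)^+}-\expec{Y^+}$ and the candidate derivative equals $\expec{\indicator_{\{Y\geq0\}}\delta Y}$, and since $\norm{\delta Y}_{\lp}$ is bounded by (and vanishes with) $\abs{\delta\theta}+\norm{\delta\qoi}_{\lp}$, the whole statement reduces to showing that $\expec{(Y+\delta Y)^+}-\expec{Y^+}-\expec{\indicator_{\{Y\geq0\}}\delta Y}=o(\norm{\delta Y}_{\lp})$ as $\norm{\delta Y}_{\lp}\to0$ (the passage between the $\delta Y$-estimate and the joint one is routine, the only edge case $\delta Y=0$ being trivial).

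The crux is a pointwise bound on $R\coloneqq(Y+\delta Y)^+-Y^+-\indicator_{\{Y\geq0\}}\delta Y$. A short case distinction on the signs of $Y$ and $Y+\delta Y$ — using that $t\mapsto t^+$ is $1$-Lipschitz, that $R=0$ on $\{Y>0,\,Y+\delta Y>0\}$ and on $\{Y<0,\,Y+\delta Y<0\}$, that on the two remaining ``crossing'' sets one has $\abs{Y}\leq\abs{\delta Y}$ and $\abs{R}=\abs{Y+\delta Y}\leq\abs{\delta Y}$, and that $\{Y=0\}$ is $\measure$-null — gives $\abs{R}\leq\abs{\delta Y}\,\indicator_{\{\abs{Y}\leq\abs{\delta Y}\}}$ almost surely. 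Hence the quantity in the previous paragraph is bounded in absolute value by $\expec{\abs{\delta Y}\,\indicator_{\{\abs{Y}\leq\abs{\delta Y}\}}}$, and the task becomes to show this is $o(\norm{\delta Y}_{\lp})$.

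For $p>1$, Hölder's inequality bounds it by $\norm{\delta Y}_{\lp}\,\measure(\abs{Y}\leq\abs{\delta Y})^{1/q}$ with $1/p+1/q=1$; and $\measure(\abs{Y}\leq\abs{\delta Y})\leq\measure(\abs{Y}\leq\varepsilon)+\varepsilon^{-p}\norm{\delta Y}_{\lp}^{p}$ for every $\varepsilon>0$ by Markov's inequality, so letting $\norm{\delta Y}_{\lp}\to0$ and then $\varepsilon\to0$, together with $\measure(\abs{Y}\leq\varepsilon)\downarrow\measure(Y=0)=0$ (continuity from above of $\measure$), forces $\measure(\abs{Y}\leq\abs{\delta Y})\to0$ and gives the $o$-bound. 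The main obstacle is exactly this last estimate: it is the only place the density/atom-free hypothesis (Assumption~\ref{assume:qoi_prop}.\ref{assume:qoi_prop:point:atom_free}) is used, and without it the crossing set $\{\abs{Y}\leq\abs{\delta Y}\}$ could carry non-vanishing probability, leaving $g$ only Gâteaux — not Fréchet — differentiable. I also expect the endpoint $p=1$ to need separate treatment, since the Hölder step degenerates there and one would instead argue by uniform integrability of the family $\{\abs{\delta Y}\,\indicator_{\{\abs{Y}\leq\abs{\delta Y}\}}\}$ (or restrict to $p>1$).
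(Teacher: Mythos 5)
Your proposal is correct and follows essentially the same route as the paper's proof in Appendix~\ref{sec:proof-q-t-diff}: reduce to the Fr\'echet differentiability of $Y \mapsto \expec{Y^+}$, bound the remainder by the mass of the crossing set (your single set $\{\abs{Y}\leq\abs{\delta Y}\}$ versus the paper's split into $T_1$, $T_2$, $T_3$ over $\{0\leq Y\leq -\delta Y\}$ and $\{-\delta Y\leq Y\leq 0\}$), apply H\"older, and show the crossing-set probability vanishes via a Markov-type inequality combined with atom-freeness and dominated convergence (your fixed-$\varepsilon$ double limit plays the role of the paper's $\norm{\delta Y}_{\lp}^{\beta}$ threshold in Lemma~\ref{lemma:dom_conv}). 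Your observation that the H\"older step degenerates at $p=1$ is a fair one and applies equally to the paper's own argument, which also relies on the exponent $1/q$ being strictly positive.
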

\begin{proof}
The reader is referred to Appendix~\ref{sec:proof-q-t-diff} for the proof.
\end{proof}
\noindent This result, combined with Assumption~\ref{assume:qoi_prop} on $\qoi$, leads immediately to the differentiability of $\obj(\theta,z)$.
\begin{corollary}\label{corollary:j_frech}
The objective function $\obj(\theta, z)$ is jointly Fr\'echet differentiable in $\setR \times \setR^d$, with Fr\'echet derivative $D\obj(\theta,z) \in \linop(\setR \times \setR^d, \setR)$ at the point $(\theta,z)$ in the direction $(\delta \theta, \delta z) \in \setR \times \setR^d$ given by:
\begin{align}
D \obj (\theta, z) (\delta \theta, \delta z) = \left(1 - \frac{\expec{\indicator_{\qoi \geq \theta}}}{1-\tau}\right)\delta \theta + \frac{\expec{\indicator_{\{\qoi \geq \theta\}} \qoi_z^T \delta z}}{1-\tau} + 2 \kappa(z-z_{ref})^T \delta z.
\end{align}
\end{corollary}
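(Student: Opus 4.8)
The plan is to obtain the corollary directly from Theorem~\ref{thm:q_t_diff} via the chain rule for Fr\'echet derivatives. Decompose $\obj(\theta,z) = \Phi(\theta;z) + \kappa\norm{z-z_{ref}}_{\sltwo}^2$; the penalty term is a smooth quadratic on $\setR^d$ with Fr\'echet derivative $\delta z\mapsto 2\kappa(z-z_{ref})^T\delta z$, so it suffices to prove that $\Phi$ is jointly Fr\'echet differentiable with derivative $(\delta\theta,\delta z)\mapsto\left(1-\frac{\expec{\indicator_{\qoi\geq\theta}}}{1-\tau}\right)\delta\theta + \frac{1}{1-\tau}\expec{\indicator_{\qoi\geq\theta}\,\qoi_z^T\delta z}$. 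The key is to recognise $\Phi = \risk\circ\Psi$, where $\risk:\setR\times\lp(\Omega,\setR)\to\setR$ is the functional of Theorem~\ref{thm:q_t_diff} and $\Psi:\setR\times\setR^d\to\setR\times\lp(\Omega,\setR)$ is $(\theta,z)\mapsto(\theta,\qoi(z,\cdot))$; by Assumption~\ref{assume:qoi_prop}.\ref{assume:qoi_prop:point:atom_free}, $\Psi$ takes values in $\setR\times\Gamma$, precisely the set on which $\risk$ is Fr\'echet differentiable.

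The first step is to show $\Psi$ is Fr\'echet differentiable at each $(\theta,z)$ with $D\Psi(\theta,z)(\delta\theta,\delta z) = (\delta\theta,\,\qoi_z(z,\cdot)^T\delta z)$. The scalar component is the identity, so the content lies in the random-variable component: Assumption~\ref{assume:qoi_prop}.\ref{assume:qoi_prop:point:qoi_sens} gives, for almost every $\omega$, the expansion $\qoi(z+h,\omega) = \qoi(z,\omega) + \qoi_z(z,\omega)^Th + r_h(\omega)$ with $r_h(\omega)/\norm{h}_{\sltwo}\to 0$ as $h\to 0$, while Assumption~\ref{assume:qoi_prop}.\ref{assume:qoi_prop:point:diff_bound} supplies the integrable majorant $\abs{r_h(\omega)}\leq\left(K(\omega)+\norm{\qoi_z(z,\omega)}_{\sltwo}\right)\norm{h}_{\sltwo}$ for $h$ small. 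Dominated convergence then upgrades the pointwise limit to $\norm{r_h}_{\mathrm{L}^1(\Omega,\setR)} = o(\norm{h}_{\sltwo})$, which is the claimed Fr\'echet differentiability of $\Psi$. Composing via the chain rule, substituting $\delta\qoi = \qoi_z(z,\cdot)^T\delta z$ into Eq.~\eqref{eq:J_frechet}, and adding $2\kappa(z-z_{ref})^T\delta z$ yields the asserted formula, and the boundedness of each summand in $(\delta\theta,\delta z)$ places $D\obj(\theta,z)$ in $\linop(\setR\times\setR^d,\setR)$.

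The main obstacle is the function space in which $\Psi$ is differentiable: Assumption~\ref{assume:qoi_prop}.\ref{assume:qoi_prop:point:diff_bound} only provides $K\in\mathrm{L}^1(\Omega)$, so the argument above controls $r_h$ in $\mathrm{L}^1(\Omega,\setR)$ rather than in $\lp(\Omega,\setR)$ when $p>1$. This is harmless: $\risk(\theta,\cdot)$ is $1$-Lipschitz in the $\mathrm{L}^1$ norm (from $\abs{\expec{(\qoi_1-\theta)^+}-\expec{(\qoi_2-\theta)^+}}\leq\expec{\abs{\qoi_1-\qoi_2}}$) and its Fr\'echet derivative~\eqref{eq:J_frechet} is continuous in the $\mathrm{L}^1$ norm of the perturbation $\delta\qoi$, so Theorem~\ref{thm:q_t_diff} and the composition above both go through with $\mathrm{L}^1(\Omega,\setR)$ replacing $\lp(\Omega,\setR)$, leaving the derivative formula unchanged. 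A self-contained alternative is to differentiate $\theta + \frac{1}{1-\tau}\expec{(\qoi(z,\cdot)-\theta)^+}$ directly by dominated convergence, using the $1$-Lipschitz continuity of $r\mapsto r^+$ to dominate the difference quotient by $K\norm{\delta z}_{\sltwo}+\abs{\delta\theta}$, invoking Assumption~\ref{assume:qoi_prop}.\ref{assume:qoi_prop:point:atom_free} so that $\measure(\qoi(z,\cdot)=\theta)=0$ and the limiting integrand equals $\indicator_{\qoi(z,\cdot)\geq\theta}\left(\qoi_z(z,\cdot)^T\delta z-\delta\theta\right)$, and then checking linearity of this directional derivative together with uniformity of the limit over $\norm{(\delta\theta,\delta z)}=1$.
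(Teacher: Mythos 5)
Your overall route is the one the paper intends: the paper gives no explicit proof of Corollary~\ref{corollary:j_frech}, asserting only that it follows "immediately" from Theorem~\ref{thm:q_t_diff} combined with Assumption~\ref{assume:qoi_prop}, i.e., precisely the composition $\Phi=\risk\circ(\theta,z)\mapsto(\theta,\qoi(z,\cdot))$ plus the smooth quadratic penalty that you spell out. You also correctly put your finger on the genuine subtlety that the paper glosses over: Assumption~\ref{assume:qoi_prop}.\ref{assume:qoi_prop:point:diff_bound} only gives $\expec{K}<\infty$, so the remainder $r_h$ of the inner map is controlled in $\mathrm{L}^1(\Omega,\setR)$, not in $\lp(\Omega,\setR)$ for $p>1$.

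However, your repair of that gap does not work. The claim that Theorem~\ref{thm:q_t_diff} "goes through with $\mathrm{L}^1$ replacing $\lp$" is false: $1$-Lipschitz continuity of $\risk(\theta,\cdot)$ in $\mathrm{L}^1$ and $\mathrm{L}^1$-continuity of the candidate derivative do not imply Fr\'echet differentiability, and in fact $l_1(Y)=\expec{Y^+}$ is \emph{not} Fr\'echet differentiable on $\mathrm{L}^1$ at a generic atom-free $Y$. Take $\delta Y=-c\,\indicator_{\{0\leq Y\leq c\}}$ with $Y$ having a density bounded above and below near $0$: then $\norm{\delta Y}_{\mathrm{L}^1}=c\,\measure(0\leq Y\leq c)\asymp c^2$, while the remainder equals $\expec{(c-Y)\indicator_{\{0\leq Y\leq c\}}}\geq (c/2)\,\measure(0\leq Y\leq c/2)\asymp c^2$, so the ratio stays bounded away from zero as $c\to 0$. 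This is consistent with the paper's own proof, where the bounds on $T_1,T_2,T_3$ use H\"older with exponent $q<\infty$ and the factor $\expec{\indicator_{\{0\leq Y\leq -\delta Y\}}}^{1/q}\to 0$; for $p=1$, $q=\infty$, that factor is $1$ and the argument collapses. The correct way to close the gap is instead to strengthen (or read) Assumption~\ref{assume:qoi_prop}.\ref{assume:qoi_prop:point:diff_bound} as $K\in\lp(\Omega,\setR)$ for the same $p>1$ as in Theorem~\ref{thm:q_t_diff}: then $\abs{r_h}^p/\norm{h}_{\sltwo}^p$ is dominated by $(K+\norm{\qoi_z(z,\cdot)}_{\sltwo})^p\in\mathrm{L}^1(\Omega)$ and tends to $0$ almost surely, so dominated convergence gives $\norm{r_h}_{\lp}=o(\norm{h}_{\sltwo})$ and the chain rule applies exactly as you composed it. Your "self-contained alternative" has the same issue in disguise: the step you defer --- "checking uniformity of the limit over $\norm{(\delta\theta,\delta z)}=1$" --- is exactly the content of Fr\'echet (versus Gateaux) differentiability and is where Theorem~\ref{thm:q_t_diff} and the $\lp$-control of the remainder are actually needed; it cannot be dispatched as a routine verification.
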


A direct implication of Corollary~\ref{corollary:j_frech} is that the gradient $\objgrad \in \setR^{d+1}$ and the partial derivatives $\obj_z(\theta, z) = \left[\obj_{z^1}(\theta,z),...,\obj_{z^d}(\theta,z)\right]^T$ and $\obj_\theta(\theta, z)$ exist and are given by the following expressions:
\begin{align}
\obj_{\theta}(\theta,z) &= 1 - \frac{\expec{\mathbbm{1}_{\qoi(z,\cdot) \geq \theta}}}{1-\tau},\label{eq:sens_theta}\\
\obj_{z}(\theta,z) &= \frac{\expec{\mathbbm{1}_{\qoi(z,\cdot) \geq \theta}\qoi_{z}(z,\cdot)}}{1-\tau} + 2 \kappa(z-z_{ref}). \label{eq:sens_z}
\end{align}
One of the main contributions of this work is the estimation of the sensitivities in Eqs.~\eqref{eq:sens_theta} and \eqref{eq:sens_z} using \gls{mlmc} estimators.
However, as discussed in Section~\ref{sec:introduction}, using \gls{mlmc} to directly estimate the expectations in Eqs.~\eqref{eq:sens_theta} and \eqref{eq:sens_z} may result in compromised or non-optimal \gls{mlmc} performance.
The reader is referred to \cite{Ganesh2022a,krumscheid2017multilevel} for a detailed discussion on the topic.
To ameliorate this issue, we propose the following alternative formulation of the gradients in terms of parametric expectations:
\begin{align}
\obj_{\theta}(\theta,z) &= \Phione(\theta;z) \label{eq:par_theta}, \quad \text{with $\Phi$ as in Eqs.~\eqref{eq:parest_form}-\eqref{eq:phi_form}},\\
\obj_{z}(\theta,z) &= \Psione(\theta;z) + 2 \kappa(z-z_{ref}), \label{eq:par_sens_theta}\\
\text{where } \Psi(\theta;z) &\coloneqq \expec{ - \frac{(\qoi(z,\cdot)-\theta)^+\qoi_{z}(z,\cdot)}{1-\tau}} \eqqcolon \left[\expec{\psi(\theta, \qoi, \qoi_{z^1})},...,\expec{\psi(\theta, \qoi, \qoi_{z^d})}\right]^T.\label{eq:psi_def}
\end{align}
The superscript prime of the parametric expectations in Eq.~\eqref{eq:par_theta} and Eq.~\eqref{eq:par_sens_theta} denotes the derivative computed with respect to $\theta$.
In addition to $\Phi(\theta;z)$, we have introduced the parametric expectation $\Psi(\theta;z) \in \setR^d$ and the function $\psi(\theta, \qoi, \qoi_{z^k}) \in \setR$ where $z^k$ and $\qoi_{z^k}$ denote the $k^{\text{th}}$ components of $z$ and $\qoi_z$ respectively, $k \in \{1,...,d\}$.
The differentiability of $\Psi(\theta; z)$ in $\theta$ follows by the same arguments of Theorem~\ref{thm:q_t_diff} and Corollary~\ref{corollary:j_frech}, under Assumption~\ref{assume:qoi_prop}.
It was shown in \cite{Ganesh2022a} that since $\phi$ and $\psi$ are Lipschitz continuous in their arguments, the corresponding \gls{mlmc} estimators no longer suffer from the compromised performance due to discontinuities.
The idea is then to build \gls{mlmc} estimators $\hat{\Phi}(\cdot,z)$ and $\hat{\Psi}(\cdot,z)$ for the whole functions $\theta \mapsto \Phi(\theta; z)$ and $\theta \mapsto \Psi(\theta;z)$ respectively on a suitably chosen interval $\Theta \subset \setR$, and then approximate $\obj_{\theta}$ and $\obj_z$ as $\hat{\obj}_{\theta}(\theta,z) = \hat{\Phi}'(\theta;z)$ and $\widetilde{\obj}_{z}(\theta,z) = \hat{\Psi}'(\theta;z) + 2\kappa(z-z_{ref})$ respectively.
As a by-product of this approach for estimating sensitivities, we construct an approximation $\theta \in \Theta \mapsto \hat{\obj}(\theta,z) = \hat{\Phi}(\theta;z) + \kappa \norm{z-z_{ref}}_{\sltwo}^2$ of the objective function itself for all $\theta \in \Theta$, at a given design $z \in \setR^d$.
This allows us to consider an optimisation problem in which exact minimisation in $\theta$ is performed at each iteration using the surrogate $\hat{\obj}$, and gradient steps are performed only in $z$ using the approximate gradient $\widetilde{\obj}_z$.
Notice that the gradient approximation in $z$ is inconsistent with the surrogate model $\hat{\obj}$, i.e., $\widetilde{\obj}_z \neq \partial_z \hat{\obj}$, in contrast to $\hat{\obj}_{\theta}$.
We will detail this approach in the next section.

\section{Gradient based optimisation algorithm} \label{sec:conv_novel_alg}
In this section, we present a gradient-based iterative procedure to find a local minimiser $(\theta^*, z^*)$ of the \gls{ouu} problem in Eq.~\eqref{eq:opt_form_combined}, should it exist. 
The broad goal of a gradient based algorithm is to define the iterates $(\theta_j, z_j), j \in \setN$ such that
\begin{align}
\lim_{j \to \infty} (\theta_j,z_j) = (\theta^*,z^*), \label{eq:convergence}
\end{align}
where the iterates are computed using gradient information.
Motivated by our interest in using \gls{mlmc} estimators based on parametric expectations to estimate the objective function and its sensitivities, we consider in this section the general situation in which, at each iteration $j$ of the gradient based algorithm, we build an approximation $\objest(\theta,z), \theta \in \Theta$ of the objective function at the design $z\in \setR^d$ on a suitably chosen interval $\Theta \subset \setR$ (which may depend on $j$, although to ease the notation, we do not highlight such dependence), as well as approximations $\objest_{\theta}(\theta,z)$ and $\objt_z(\theta,z)$, $\theta \in \Theta$, where the approximation $\objt_z$ may not coincide with the $z$-derivative of $\objest$.
The approximations $\objest$, $\objest_\theta$ and $\objt_z$ may be random, as will be the case for \gls{mlmc} estimators.
We then propose the following variation of the standard gradient descent algorithm, starting from an initial design $z_0$:
\begin{align}
\theta_j \in \argmin_{\theta \in \Theta} \objest(\theta,z_j),\label{eq:step_theta}\\
z_{j+1} =  z_j - \alpha \objt_z(\theta_j,z_j) , \label{eq:step_z}
\end{align}
where $\alpha > 0$ denotes a step size parameter.
We note that according to the procedure in \cite{Ganesh2022a}, the interval $\Theta$ can be freely selected and, hence, we can ensure that $\theta_j$ always belongs to the interior of $\Theta$, so that $\objest_\theta(\theta_j, z_j) = 0 \;\forall j \in \setN$.

In Theorem~\ref{thm:approx_convergence} in Section~\ref{sec:exp_conv}, we show that the iterates $(\theta_j, z_j)$ converge Q-linearly in the iteration counter $j$ towards $(\theta^*,z^*)$ under additional assumptions on the objective function $\obj$ and its approximations $\objest$.
The results of Theorem~\ref{thm:approx_convergence}, specifically the implications of Eq.~\eqref{eq:gradient_error_bound_assumption} introduced there, demonstrate that Q-linear convergence of the iterates $z_j$ and $\theta_j$ in $j$ can be obtained if the gradient approximation is accurate up to a tolerance that is a fraction $\eta$ of the gradient magnitude at the previous iteration.
Such an accuracy condition was used in \cite{beiser2020adaptive}, and has been utilised in literature prior to this work.
The interested reader is referred to \cite{byrd2012sample} and \cite{bollapragada2018adaptive}.

The step size is selected sufficiently small, and remains fixed over all optimisation iterations, although variable step sizes and line search methods could be easily added.
The algorithm is terminated once the gradient magnitude drops to a specified fraction of the initial value.
We introduce here the notation $w = (\theta, z)$, $\objgrad = (\obj_{\theta}, \obj_z)$ and $\objestgrad = (\objest_{\theta}, \objt_z)$ for convenience in the following.\\ \ \\
\begin{algorithm}[H]
\begin{algorithmic}[1]
	\STATE Input: Initial design $z_0$, iteration counter $j=0$, tolerance $0<\epsilon <1$, step size $\alpha > 0$ and tolerance fraction $\eta > 0$.
	\STATE Set residual $r = \epsilon+1$.
	\WHILE{$r > \epsilon$}
		\STATE \textbf{if} $j = 0$ $\{$ Compute $\hat{\obj}^{0}(\cdot, z_j)$ and $\widetilde{\obj}_z^{0}(\cdot, z_j)$ up to a fixed tolerance.$\}$ 
		\STATE \textbf{else} $\bigg\{$ Compute $\objest(\cdot,z_j)$ and $\objt_z(\cdot,z_j)$ such that $\mse{\objestgrad(\cdot,z_j)} \leq \eta^2 \norm{\objgrad(\theta_{j-1},z_j)}^2_{\sltwo}$ with $\mse{\objestgrad(\cdot,z_j)}$ defined as in Eq.~\eqref{eq:gradient_error_bound_assumption}.$\bigg\}$
		\STATE Compute a minimiser $\theta_{j} \in \argmin_{\theta \in \Theta} \objest(\theta, z_j)$.
		\STATE Compute gradient step $z_{j+1} = z_j - \alpha \objt_{z}(\theta_j, z_j) $.
		\STATE Set residual $r = \norm{\objestgrad(w_{j})}_{\sltwo}^2 / \norm{\tilde{\nabla} \hat{\obj}^{0}(w_{0})}_{\sltwo}^2$.
		\STATE Update $j \leftarrow j + 1$.
	\ENDWHILE
\end{algorithmic}
\caption{Novel \gls{amgd} algorithm}
\label{alg:opti_mlmc_demo}
\end{algorithm}

\subsection{Convergence analysis} \label{sec:exp_conv}
For the interested reader, we present a self-contained convergence analysis of the iterates $(\theta_j, z_j)$ in Theorem~\ref{thm:approx_convergence}, under additional assumptions on $\obj$ and $\objest$, based on the analysis presented in \cite{beiser2020adaptive}.
The key differences in the two analyses are related to the fact that the algorithm studied here is an \gls{amgd} algorithm instead of a pure gradient descent algorithm.
We first note that the objective function $\obj(\theta,z)$ is convex under the additional assumption that $\qoi(z,\cdot)$ is almost surely convex in $z$ \cite[Theorem 10]{rockafellar2002conditional}.
When combined with the assumption that $\obj \to \infty$ when $\norm{z}_{\sltwo}, \abs{\theta} \to \infty$, this ensures that a minimiser of $\obj(\theta,z)$ exists in $\setR \times \setR^d$.
However, we require additional assumptions on the objective function $\obj$ to prove Q-linear convergence of the iterates $\theta_j$ and $z_j$ towards such a minimiser; namely Assumptions~\ref{assume:strong_convexity} and \ref{assume:lipschitz_grad} below on strong convexity and on the Lipschitz continuity of the gradients, respectively.
An immediate implication of Assumption~\ref{assume:strong_convexity} is that there exists a unique minimiser $(\theta^*, z^*) \in \setR \times \setR^d$ for the \gls{ouu} problem in Eq.~\eqref{eq:opt_form_combined} such that $\obj_{z}(\theta^*, z^*) =  \obj_{\theta}(\theta^*, z^*) = 0$.

In what follows, we denote by $\expecc{\cdot}$ the expectation conditional on all of the random variables used to define $z_j$ (i.e., conditioned on the past up to iteration $j$), and by $\inner{\cdot, \cdot}$ the $\sltwo$ inner product.
Readers interested in the implementation details of Algorithm~\ref{alg:opti_mlmc_demo} and its relation to the \gls{mlmc} method can proceed directly to Section~\ref{sec:grad_est_mlmc}.
\begin{assumption}\label{assume:strong_convexity}
The objective function $\obj$ is $\mu$-strongly convex, i.e., there exists $\mu > 0$ such that, for all $w_a, w_b \in \setR \times \setR^d$, equivalently:
\begin{enumerate}
\item[(i)] $\obj(w_b) \geq \obj(w_a) + \inner{w_b-w_a,\objgrad(w_a)} + \frac{\mu}{2} \norm{w_b-w_a}_{\sltwo}^2$,
\item[(ii)] $\inner{\objgrad(w_b) - \objgrad(w_a),w_b-w_a} \geq \mu \norm{w_b-w_a}_{\sltwo}^2$.
\end{enumerate}
\end{assumption}
\begin{assumption}\label{assume:lipschitz_grad}
The objective function $\obj$ has Lipschitz continuous gradients, i.e., there exists $L > 0$ such that, for all $w_a, w_b \in \setR \times \setR^d$:
\begin{align}
\norm{\objgrad(w_b)-\objgrad(w_a)}_{\sltwo} \leq L \norm{w_b-w_a}_{\sltwo}.
\end{align}
\end{assumption}
\begin{lemma} \label{lemma:21}
Let $\obj$ satisfy Assumptions~\ref{assume:strong_convexity} and Assumptions~\ref{assume:lipschitz_grad}.
Then we have that, for $0 < \alpha \leq 1/L$,
\begin{align}
\frac{\mu}{2} \norm{w-w^*}_{\sltwo}^2 + \frac{\alpha}{2} \norm{\objgrad(w)}^2_{\sltwo} \leq \inner{\objgrad(w),w-w^*}.
\end{align}
The above result is restated here from \cite[Lemma 2.1]{beiser2020adaptive}.
\end{lemma}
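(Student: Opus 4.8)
The plan is to obtain the stated inequality by combining two standard consequences of Assumptions~\ref{assume:strong_convexity} and~\ref{assume:lipschitz_grad}, using crucially that the unique minimiser $w^*=(\theta^*,z^*)$ guaranteed by strong convexity satisfies the first-order condition $\obj_w(w^*)=0$ (already noted just before the lemma). Everything takes place in the finite-dimensional space $\setR\times\setR^d$, so no functional-analytic care beyond $\obj\in C^1$ (a consequence of Corollary~\ref{corollary:j_frech}) is needed.

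First I would use Assumption~\ref{assume:lipschitz_grad} through the descent lemma: $L$-Lipschitz continuity of $\obj_w$ gives, for all $w,y\in\setR\times\setR^d$,
\begin{align*}
\obj(y)\le \obj(w)+\inner{\obj_w(w),\,y-w}+\frac{L}{2}\norm{y-w}_{\sltwo}^2,
\end{align*}
and minimising the right-hand side over $y$ (at $y=w-L^{-1}\obj_w(w)$), together with $\obj(w^*)\le\obj(y)$, yields the Polyak--\L ojasiewicz-type bound $\obj(w)-\obj(w^*)\ge \frac{1}{2L}\norm{\obj_w(w)}_{\sltwo}^2$. Next I would invoke Assumption~\ref{assume:strong_convexity}(i) with $w_a=w$, $w_b=w^*$, which after rearranging gives
\begin{align*}
\inner{\obj_w(w),\,w-w^*}\ge \big(\obj(w)-\obj(w^*)\big)+\frac{\mu}{2}\norm{w-w^*}_{\sltwo}^2 .
\end{align*}
Substituting the first bound into the second and then using $0<\alpha\le 1/L$ to replace $\frac{1}{2L}$ by $\frac{\alpha}{2}$ produces exactly
\begin{align*}
\inner{\obj_w(w),\,w-w^*}\ge \frac{\alpha}{2}\norm{\obj_w(w)}_{\sltwo}^2+\frac{\mu}{2}\norm{w-w^*}_{\sltwo}^2 ,
\end{align*}
which is the claim.

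I do not anticipate any genuine obstacle: the argument is entirely elementary once the two ingredients are in place. The only point that warrants a line of justification is the descent lemma itself, which follows from $\obj\in C^1(\setR\times\setR^d)$ by applying the fundamental theorem of calculus to $t\mapsto\obj(w+t(y-w))$ and bounding $\norm{\obj_w(w+t(y-w))-\obj_w(w)}_{\sltwo}$ via Assumption~\ref{assume:lipschitz_grad}. One could alternatively start from the sharper co-coercivity estimate for $\mu$-strongly convex, $L$-smooth functions and set $y=w^*$, but the weaker route above already gives the required constants for all $\alpha\le 1/L$ and keeps the proof self-contained.
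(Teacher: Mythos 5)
Your argument is correct. Note first that the paper does not actually prove this lemma: it simply restates it from \cite[Lemma~2.1]{beiser2020adaptive}, so there is no in-paper proof to compare against line by line. Your derivation is the standard self-contained route: the descent lemma (valid since $\obj\in C^1$ with $L$-Lipschitz gradient by Assumption~\ref{assume:lipschitz_grad}) minimised over $y$ gives $\obj(w)-\obj(w^*)\geq \frac{1}{2L}\norm{\obj_w(w)}_{\sltwo}^2$, strong convexity at $w_a=w$, $w_b=w^*$ gives $\inner{\obj_w(w),w-w^*}\geq \obj(w)-\obj(w^*)+\frac{\mu}{2}\norm{w-w^*}_{\sltwo}^2$, and chaining these with $\alpha\leq 1/L$ yields the claim with exactly the stated constants. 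One small remark: your preamble says the first-order condition $\obj_w(w^*)=0$ is used ``crucially,'' but in fact your proof never needs it --- the descent-lemma step only uses that $w^*$ is a global minimiser (so $\obj(w^*)\leq\obj(y)$ for the particular $y=w-L^{-1}\obj_w(w)$), and the strong-convexity step evaluates the gradient at $w$, not at $w^*$. The alternative you mention, co-coercivity of strongly convex $L$-smooth functions with $y=w^*$, would give $\inner{\obj_w(w),w-w^*}\geq\frac{\mu L}{\mu+L}\norm{w-w^*}_{\sltwo}^2+\frac{1}{\mu+L}\norm{\obj_w(w)}_{\sltwo}^2$ and then requires the extra (harmless) observations $\frac{\mu L}{\mu+L}\geq\frac{\mu}{2}$ and $\frac{1}{\mu+L}\geq\frac{1}{2L}\geq\frac{\alpha}{2}$; your chosen route is cleaner for the constants actually needed.
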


\begin{theorem} \label{thm:approx_convergence}
Let $\Theta \subset \setR$ be a convex set.
Let $\obj:\setR \times \setR^d \to \setR$ satisfy Assumptions~\ref{assume:strong_convexity} and \ref{assume:lipschitz_grad}, and $\objest:\Theta \times \setR^d \to \setR$ satisfy the following condition:
\begin{align}
\mse{\objestgrad(\cdot,z_j)} &\coloneqq \expecc{\norm{\objest_{\theta}(\cdot, z_j)-\obj_{\theta}(\cdot, z_j)}^2_{\linf(\Theta)}} \nonumber \\
&+ \sum_{k=1}^d \expecc{\norm{\objt_{z,k}(\cdot, z_j)-\obj_{z,k}(\cdot, z_j)}^2_{\linf(\Theta)}} \leq \eta^2 \norm{\objgrad(\theta_{j-1}, z_j)}^2_{\sltwo},\label{eq:gradient_error_bound_assumption}
\end{align}
for some $\eta > 0$, where $(\theta_{j-1},z_j)$ is the $j^{\text{th}}$ iterate produced by Algorithm~\ref{alg:opti_mlmc_demo} with step size $\alpha$ satisfying $0<\alpha \leq 1/L$ and $\alpha \mu \leq 1$.
Then, the following result holds true:
\begin{align}
\expec{\norm{z_{j+1}-z^*}_{\sltwo}^2 + C_1 (\theta_{j}-\theta^*)^2} & \leq \xi \expec{ \norm{z_j-z^*}_{\sltwo}^2 + C_1 (\theta_{j-1}-\theta^*)^2 }, \label{eq:z_contraction_approx}
\end{align}
for some constants $C_1 > 0$ and $0 <\xi < 1$.
\end{theorem}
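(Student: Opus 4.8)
The plan is to track the combined iterate error $w_j - w^* = (\theta_{j-1} - \theta^*, z_j - z^*)$ through one step of the algorithm, handling the $z$-update and the $\theta$-update separately and then combining them into a single Lyapunov-type quantity. Throughout I will write $g_j := \obj_z(\theta_j, z_j)$ for the exact gradient and $\tilde g_j := \objt_z(\theta_j, z_j)$ for its approximation, and similarly exploit the fact (noted before the theorem) that $\objest_\theta(\theta_j, z_j) = 0$ since $\theta_j$ is an interior minimiser of $\theta \mapsto \objest(\theta, z_j)$ on $\Theta$.

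\textbf{Step 1: the $z$-contraction.} Expand
\begin{align}
\norm{z_{j+1} - z^*}_{\sltwo}^2 = \norm{z_j - z^*}_{\sltwo}^2 - 2\alpha \inner{\tilde g_j, z_j - z^*} + \alpha^2 \norm{\tilde g_j}_{\sltwo}^2. \nonumber
\end{align}
Write $\tilde g_j = g_j + (\tilde g_j - g_j)$ and split the inner product and the squared norm accordingly. For the exact-gradient parts I would use Lemma~\ref{lemma:21} (with $w = (\theta_j, z_j)$ — note that this introduces the full gradient $\obj_w(\theta_j,z_j)$, whose $\theta$-component is $\obj_\theta(\theta_j,z_j)$, not zero, so I must be careful to add back/subtract the $\theta$-block) to get a term $-\alpha\mu\norm{z_j-z^*}_{\sltwo}^2$ plus a useful negative $-\alpha^2\norm{\obj_w(\theta_j,z_j)}_{\sltwo}^2$ term, and handle the cross terms involving $\tilde g_j - g_j$ by Cauchy--Schwarz and Young's inequality, absorbing them into the available negative terms at the cost of constants. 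The error $\norm{\tilde g_j - g_j}_{\sltwo}$ is controlled, after taking $\expecc{\cdot}$, by $\mse{\objest_w(\cdot, z_j)} \le \eta^2 \norm{\obj_w(\theta_{j-1}, z_j)}_{\sltwo}^2$ via Eq.~\eqref{eq:gradient_error_bound_assumption}, since the $\linf(\Theta)$ norm dominates the pointwise value at $\theta = \theta_j \in \Theta$.

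\textbf{Step 2: the $\theta$-error.} Because $\obj(\cdot, z_j)$ is $\mu$-strongly convex and $L$-smooth as a function of $\theta$ alone (inherited from Assumptions~\ref{assume:strong_convexity}--\ref{assume:lipschitz_grad}), and $\objest(\cdot,z_j)$ approximates it with $\linf$-derivative error bounded as in Eq.~\eqref{eq:gradient_error_bound_assumption}, I would show that the minimiser $\theta_j$ of the surrogate is close to the minimiser of the exact objective in $\theta$ at fixed $z_j$, and then relate the latter to $\theta^*$. Concretely, $0 = \objest_\theta(\theta_j, z_j)$ gives $\abs{\obj_\theta(\theta_j, z_j)} = \abs{\obj_\theta(\theta_j,z_j) - \objest_\theta(\theta_j,z_j)} \le \norm{\obj_\theta(\cdot,z_j) - \objest_\theta(\cdot,z_j)}_{\linf(\Theta)}$; strong convexity in $\theta$ then converts this into a bound on $\abs{\theta_j - \theta^\sharp(z_j)}$ where $\theta^\sharp(z_j) = \argmin_\theta \obj(\theta, z_j)$, and $L$-Lipschitzness of the $\theta$-minimiser map in $z$ (again from the assumptions, since $\obj_\theta(\theta^\sharp(z), z) = 0$ defines $\theta^\sharp$ implicitly with nondegenerate Hessian block) gives $\abs{\theta^\sharp(z_j) - \theta^*} \lesssim \norm{z_j - z^*}_{\sltwo}$. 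Combining, $(\theta_j - \theta^*)^2 \lesssim \norm{z_j - z^*}_{\sltwo}^2 + \mse{\objest_w(\cdot,z_j)}$.

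\textbf{Step 3: combine and close the recursion.} Take $\expecc{\cdot}$ of the Step 1 bound, substitute the $\mse$ bound, and add $C_1$ times the Step 2 bound for a constant $C_1$ to be chosen. This yields, for suitable $C_1$ and $\eta$ small enough (and $\alpha \le 1/L$, $\alpha\mu \le 1$), an inequality of the form $\expecc{\norm{z_{j+1}-z^*}_{\sltwo}^2 + C_1(\theta_j - \theta^*)^2} \le \xi\bigl(\norm{z_j - z^*}_{\sltwo}^2 + C_1(\theta_{j-1}-\theta^*)^2\bigr) + (\text{terms of the same form absorbed into }\xi)$; the subtlety is that the right-hand side of the $\mse$ bound involves $\norm{\obj_w(\theta_{j-1}, z_j)}_{\sltwo}^2$, evaluated at the \emph{mixed} point $(\theta_{j-1}, z_j)$, so I must bound this by $L^2(\abs{\theta_{j-1} - \theta^*}^2 + \norm{z_j - z^*}_{\sltwo}^2)$ using Assumption~\ref{assume:lipschitz_grad} and $\obj_w(w^*) = 0$ — this is exactly why the Lyapunov quantity must carry both the current $z_j$ error and the \emph{previous} $\theta_{j-1}$ error, matching the statement of Eq.~\eqref{eq:z_contraction_approx}. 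Finally, take total expectation and iterate.

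\textbf{Main obstacle.} I expect the delicate part to be the bookkeeping of constants in Step 3: choosing $C_1$ and the smallness threshold for $\eta$ so that all the error cross-terms (from the inexact gradient in Step 1 and from the $\mse$ term in Step 2, both of which reintroduce $\norm{z_j-z^*}^2$ and $(\theta_{j-1}-\theta^*)^2$ with $\eta$-dependent coefficients) are strictly dominated by the contraction factors $1-\alpha\mu$ and the chosen $\theta$-contraction, so that a single $\xi \in (0,1)$ emerges. The mismatch between the index of $\theta$ ($\theta_{j-1}$ on the right, $\theta_j$ on the left) and the appearance of the mixed evaluation point $(\theta_{j-1}, z_j)$ in the error bound is the structural feature that drives the shape of the Lyapunov function and is the one place where the \gls{amgd} analysis genuinely departs from the pure gradient-descent argument in \cite{beiser2020adaptive}.
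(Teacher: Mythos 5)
Your Steps 1 and 3 reproduce the paper's argument almost exactly: expand $\norm{z_{j+1}-z^*}_{\sltwo}^2$, use $\objest_\theta(\theta_j,z_j)=0$ to assemble full $w$-gradient quantities at no cost, apply Lemma~\ref{lemma:21} at $(\theta_j,z_j)$ to produce both the $-\alpha\mu\norm{w_j-w^*}_{\sltwo}^2$ contraction and the spare $-\alpha^2\norm{\obj_w(\theta_j,z_j)}_{\sltwo}^2$ used to absorb the inexactness, control the error terms through the $\linf(\Theta)$ bound evaluated pointwise at $\theta_j$, and convert $\norm{\obj_w(\theta_{j-1},z_j)}_{\sltwo}^2$ into $\norm{z_j-z^*}_{\sltwo}^2+(\theta_{j-1}-\theta^*)^2$, which is what forces the lagged Lyapunov pairing. (The paper performs this last conversion with the consequence $\alpha\norm{\obj_w(w)}_{\sltwo}\le \tilde L\norm{w-w^*}_{\sltwo}$ of Lemma~\ref{lemma:21} rather than with the Lipschitz constant $L$; either works.) The one genuine departure is your Step 2, and it is unnecessary: the paper never proves a separate bound on $(\theta_j-\theta^*)^2$. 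Applying Lemma~\ref{lemma:21} to the full inner product $\inner{\obj_w(w_j),w_j-w^*}$ already yields a term $-\alpha\mu\,(\theta_j-\theta^*)^2$ with a \emph{favourable} sign, which is simply moved to the left-hand side to create the quantity $\norm{z_{j+1}-z^*}_{\sltwo}^2+C_1(\theta_j-\theta^*)^2$ with $C_1\approx\alpha\mu$. Your perturbation argument for the surrogate minimiser (strong convexity in the $\theta$-direction giving $\mu\abs{\theta_j-\theta^\sharp(z_j)}\le\norm{\objest_\theta(\cdot,z_j)-\obj_\theta(\cdot,z_j)}_{\linf(\Theta)}$, plus Lipschitz continuity of $z\mapsto\theta^\sharp(z)$) is correct and could be made to close, but it is strictly more expensive: it reintroduces a positive contribution of order $C_1(L/\mu)^2\norm{z_j-z^*}_{\sltwo}^2$ that must additionally be dominated by the $1-\alpha\mu$ contraction from Step 1, which forces $C_1$ to be taken small and further tightens the admissible range of $\eta$. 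If you carry out Step 1 as you describe, you should drop Step 2 entirely and instead keep the $(\theta_j-\theta^*)^2$ term that Lemma~\ref{lemma:21} hands you for free.
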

\begin{proof}
From the definition of the iterate $z_{j+1}$ in Eq.~\eqref{eq:step_z}, we have:
\begin{align}
\norm{z_{j+1} - z^*}_{\sltwo}^2 &= \norm{z_j - z^* - \alpha  \objt_z(\theta_j, z_j) }_{\sltwo}^2\\
&= \norm{z_j -z^*}_{\sltwo}^2 + \alpha^2 \norm{\objt_{z}(\theta_j, z_j)}^2 - 2\alpha \inner{ \objt_{z}(\theta_j, z_j),z_j - z^*}\\
&= \norm{z_j -z^*}_{\sltwo}^2 + \underbrace{\alpha^2\left( \norm{\objt_{z}(\theta_j, z_j)}^2 + \left(\objest_{\theta}(\theta_j, z_j)\right)^2\right)}_{\eqqcolon \hat{T}_1}\nonumber\\
&\underbrace{- 2\alpha \left( \inner{ \obj_{z}(\theta_j, z_j),z_j - z^*} + \inner{ \obj_{\theta}(\theta_j, z_j),\theta_j - \theta^*} \right)}_{\eqqcolon \hat{T}_2} \nonumber  \\
&\underbrace{- 2\alpha \left( \inner{ \objt_{z}(\theta_j, z_j)-\obj_{z}(\theta_j, z_j),z_j - z^*} + \inner{\objest_{\theta}(\theta_j, z_j)-\obj_{\theta}(\theta_j, z_j),\theta_j - \theta^*}\right)}_{\eqqcolon \hat{T}_3}.
\end{align}
The term $\hat{T}_1 = \alpha^2 \norm{\objestgrad(w_j)}^2_{\sltwo}$ can be bounded as follows:
\begin{align}
\expecc{\hat{T}_1} &= \alpha^2 \expecc{ \norm{\objestgrad(\theta_j, z_j)}_{\sltwo}^2} \\
&\leq \alpha^2 \expecc{ \norm{\objestgrad(\theta_j, z_j) \pm \objgrad(\theta_j, z_j)}_{\sltwo}^2} \\
&\leq \alpha^2 \left[ \expecc{ \norm{\objestgrad(\theta_j, z_j) - \objgrad(\theta_j, z_j)}_{\sltwo}^2}^{1/2} + \expecc{\norm{\objgrad(\theta_j,z_j)}_{\sltwo}^2}^{1/2}\right]^2 \\
&\leq \alpha^2 \left[ \eta \norm{\objgrad(\theta_{j-1}, z_j)}_{\sltwo} + \expecc{\norm{\objgrad(\theta_j,z_j)}_{\sltwo}^2}^{1/2}\right]^2 \\
&\leq \alpha^2 \left[ (\eta^2 + \eta) \norm{\objgrad(\theta_{j-1}, z_j)}_{\sltwo}^2 + (1+\eta)\expecc{\norm{\objgrad(\theta_j,z_j)}_{\sltwo}^2}\right],
\end{align}
The term $\hat{T}_2 = -2\alpha \inner{\objgrad(w_j),w_j-w^*}$ can be bounded as follows:
\begin{align}
\expecc{\hat{T}_2} &\leq -\alpha \mu \left(\norm{z_j-z^*}_{\sltwo}^2 + \expecc{(\theta_j-\theta^*)^2}\right) - \alpha^2 \expecc{\norm{\objgrad(\theta_j,z_j)}_{\sltwo}^2},
\end{align}
where we have used Lemma~\ref{lemma:21}.
Finally, the term $\hat{T}_3 = -2\alpha \inner{\objestgrad(w_j)-\objgrad(w_j),w_j-w^*}$ can be bounded as follows:
\begin{align}
\expecc{\hat{T}_3} &\leq 2\alpha \expecc{\norm{\objestgrad(\theta_j, z_j)-\objgrad(\theta_j, z_j) }_{\sltwo}  \norm{w_j - w^*}_{\sltwo}} \\
&\leq 2\alpha \expecc{\norm{\objestgrad(\theta_j, z_j)-\objgrad(\theta_j, z_j) }^2_{\sltwo}}^{1/2}  \expecc{\norm{w_j - w^*}_{\sltwo}^2}^{1/2} \\
&\leq 2\alpha \eta \norm{\objgrad(\theta_{j-1}, z_j)}_{\sltwo} \expecc{\norm{w_j - w^*}^2_{\sltwo}}^{1/2}.
\end{align}
Combining the bounds for $\hat{T}_1$, $\hat{T}_2$ and $\hat{T}_3$, we have the following:
\begin{align}
\expecc{\norm{z_{j+1}-z^*}_{\sltwo}^2} &\leq (1-\alpha\mu) \norm{z_j-z^*}_{\sltwo}^2 -  \alpha \mu \expecc{(\theta_j-\theta^*)^2} \nonumber \\
&+ \alpha^2 (\eta^2 + \eta) \norm{\objgrad(\theta_{j-1}, z_j)}_{\sltwo}^2 + \alpha^2 \eta\expecc{\norm{\objgrad(\theta_j,z_j)}_{\sltwo}^2} \nonumber\\
&+ 2\alpha \eta \norm{\objgrad(\theta_{j-1}, z_j)}_{\sltwo} \expecc{\norm{w_j - w^*}^2_{\sltwo}}^{1/2} . \label{eq:sum_terms_bound_expecc}
\end{align}
We now utilise Lemma~\ref{lemma:21} once again, from which we have the following result:
\begin{align}
\alpha \norm{\objgrad(w)}_{\sltwo} \leq (1+\sqrt{1-\alpha \mu}) \norm{w-w^*}_{\sltwo} \eqqcolon \tilde{L} \norm{w-w^*}_{\sltwo}, \label{eq:lemma_21_corollary}
\end{align}
for $0 < \alpha \leq 1/L$ and $\alpha \mu \leq 1$.
In addition, the last term of Eq.~\eqref{eq:sum_terms_bound_expecc} can be rewritten as follows:
\begin{align}
2\alpha \eta \norm{\objgrad(\theta_{j-1}, z_j)}_{\sltwo} \expecc{\norm{w_j - w^*}^2_{\sltwo}}^{1/2} \leq \eta \left(\frac{\alpha^2 \norm{\objgrad(\theta_{j-1}, z_j)}^2_{\sltwo}}{\tilde{L}} + \tilde{L} \expecc{\norm{w_j-w^*}_{\sltwo}^2}\right) \label{eq:t3_new_bound}
\end{align}
Applying Eqs.~\eqref{eq:lemma_21_corollary} and \eqref{eq:t3_new_bound} to Eq.~\eqref{eq:sum_terms_bound_expecc}, we then have the following simplified bound:
\begin{align}
\expecc{\norm{z_{j+1}-z^*}_{\sltwo}^2} &\leq \left(1-\alpha \mu + (\eta^2 + 2 \eta) \tilde{L}^2 + 2 \eta \tilde{L} \right) \norm{z_j-z^*}_{\sltwo}^2 \nonumber \\
& + \left(-\alpha \mu + \eta \tilde{L}^2 + \eta \tilde{L} \right) \expecc{(\theta_j-\theta^*)^2} \nonumber \\
& + \left((\eta^2 + \eta)\tilde{L}^2+\eta \tilde{L}\right) (\theta_{j-1}-\theta^*)^2,\\
&= (1-C_1+C_2) \norm{z_j-z^*}_{\sltwo}^2 - C_1 \expecc{(\theta_j-\theta^*)^2} + C_2(\theta_{j-1}-\theta^*)^2,
\end{align}
where we have defined the constants $C_1 = \alpha \mu - \eta \tilde{L}^2 + \eta \tilde{L}$ and $C_2 = (\eta^2 + \eta)\tilde{L}^2 + \eta \tilde{L}$. 
We then have the following:
\begin{align}
\expecc{\norm{z_{j+1}-z^*}_{\sltwo}^2} + C_1 \expecc{(\theta_j-\theta^*)^2} &\leq (1-C_1+C_2) \norm{z_j-z^*}_{\sltwo}^2 + C_2 (\theta_{j-1}-\theta^*)^2 \nonumber\\
&\leq \max\left( 1-C_1+C_2, \frac{C_2}{C_1} \right) \left( \norm{z_j-z^*}_{\sltwo}^2 + C_1 (\theta_{j-1}-\theta^*)^2 \right).
\end{align}
We note that the leading constant on the right hand side is less than $1$ as long as $C_1>C_2$, which holds true for $\eta < \sqrt{1+\alpha\mu/\tilde{L}^2} - 1$.
This in turn ensures contraction in the norm $\norm{z}_{\sltwo}^2 + C_1 \theta^2$ on the space $\setR^d \times \setR$.
This completes the proof.
\end{proof}
\paragraph{Remark 1.} We note that although the accuracy condition Eq.~\eqref{eq:gradient_error_bound_assumption} is stated in the $\linf$-norm for all $\theta$, the proof of Theorem~\ref{thm:approx_convergence} uses this property only at $\theta_j$.
This condition is required since we do not know the quantile $\theta_j$ a priori, and seek to use the parametric expectation framework from \cite{Ganesh2022a} to do so.
\cite{Ganesh2022a} requires that the error in the approximations $\objest$ be controlled at all $\theta$, in order to estimate $\theta_j$ accurately.

\paragraph{Remark 2.} In practical applications, it is difficult to determine whether Assumptions~\ref{assume:strong_convexity} and~\ref{assume:lipschitz_grad} are satisfied, since both are strongly dependent on the properties of the random \gls{qoi} $\qoi(z,\cdot)$.
These assumptions require stronger properties on $\qoi(z,\cdot)$ and its \gls{pdf} than those presented in Assumption~\ref{assume:qoi_prop}; for example, that the \gls{pdf} remains both upper bounded and lower bounded away from zero for all designs $z$, and that the random variable $\qoi(z,\cdot)$ is bounded, i.e., $\qoi(z,\cdot) \in \linf(\Omega,\setR)$.

\paragraph{Remark 3.} We remark that $\eta$ is a monotonically increasing function of the product $\alpha \mu$ in the interval $0 < \alpha \mu \leq 1$.
For an appropriate step size $\alpha$, chosen such that $\alpha \mu$ is close to $1$, $\eta$ can be as large as $0.4$.

\section{Gradient estimation and error control using \gls{mlmc} methods} \label{sec:grad_est_mlmc}
We note that the key assumption in the proof of Theorem~\ref{thm:approx_convergence} is Eq.~\eqref{eq:gradient_error_bound_assumption}; namely, that the gradient approximation is accurate up to a tolerance that is proportional to the magnitude of the true gradient.
As stated earlier in Section~\ref{sec:introduction}, we are interested in utilising the framework of \gls{mlmc} estimators for parametric expectations developed in \cite{Ganesh2022a} for the accurate estimation of the objective function $\obj$ (risk-measure \gls{cvar}) and its gradient.

Expressing the gradients $\obj_z$ and $\obj_{\theta}$ in terms of the first derivatives of the parametric expectations $\Phi(\theta;z)$ and $\Psi(\theta;z)$ as in Eqs.~\eqref{eq:par_theta} and \eqref{eq:par_sens_theta} and estimating the latter using \gls{mlmc} estimators poses many key advantages.
The first advantage was already seen earlier in Section~\ref{sec:conv_novel_alg}; namely that $\objt_z$ and $\objest_\theta$ can be estimated for all $\theta$ for a given design $z$ in one shot.
Secondly, as was demonstrated in \cite{Ganesh2022a}, the level-wise differences for the \gls{mlmc} estimator of $\Phi(\theta;z)$, analogous to the level-wise differences corresponding to the classical \gls{mlmc} estimator of $\expec{\qoi}$, decay at the same rate in the levels $l$ as the differences $\qoi_l - \qoi_{l-1}$, in an appropriately selected norm over $\theta \in \setR$.
This ensures that if cost-optimal \gls{mlmc} behaviour can be achieved for estimating $\expec{\qoi}$, then it can be achieved also for \gls{mlmc} estimators of $\Phi(\theta;z)$ and $\Psi(\theta;z)$, using a practically computable number of samples.
The last key advantage is that, using the mechanism in \cite{Ganesh2022a}, one can select the parameters of the \gls{mlmc} estimator such that a prescribed tolerance can be attained on the \gls{mlmc} approximation error on $\Phi$ and $\Psi$.
By prescribing a tolerance proportional to the gradient magnitude, one can estimate the gradient using \gls{mlmc} estimators that respect the condition in Eq.~\eqref{eq:gradient_error_bound_assumption} as required by Algorithm~\ref{alg:opti_mlmc_demo}.

Although the procedure used in this work to estimate $\Phi$ accurately is identical to the one described in \cite{Ganesh2022a}, some important modifications are required to use the same procedure for accurately estimating $\Psi$.
We present in this section the modifications of the work developed in \cite{Ganesh2022a} that are required for the accurate estimation of $\Psi$, and consequently the gradients $\obj_\theta$ and $\obj_z$, using the \gls{mlmc} method.

\subsection{\gls{mlmc} estimator for the gradients}
We begin by recalling that the parametric expectation $\Psi$ is defined as in Eq.~\eqref{eq:psi_def}.
The proposed \gls{mlmc} method relies on a sequence of approximations $\{\qoi_{l}(z) \}_{l=0}^L$ to $\qoi(z)$ on a sequence of $L+1$ discretisations with, for example, different mesh sizes $h_0 > h_1 > ... > h_L$, typically a geometric sequence $h_{l-1}=s h_l$ with $s>1$.
The \gls{mlmc} estimator for the $k^{\text{th}}$ component $\Psik(\cdot;z) \coloneqq \expec{\psi(\cdot, \qoi(z), \qoi_{z^k}(z))}$ of $\Psi$ on $\Theta$, $k \in \{1,...,d\}$ follows the same construction as that for $\Phi$ in \cite{Ganesh2022a}.
The first step is to estimate $\Psik(\theta_r,z), r \in \{1,...,n\}$, on a set of $n$ equidistant points $\thetab = \{\theta_1, ..., \theta_n\}$ such that $\Theta = [\theta_1, \theta_n]$, by a standard \gls{mlmc} estimator $\Psiek(\theta_r;z)$, which reads:
\begin{align}
\Psiek(\theta_r;z) &\coloneqq \frac{1}{N_0}\sum_{i=1}^{N_0} \psi\left(\theta_r, \qoi_{0}^{(i,0)}(z),\qoi_{z^k,0}^{(i,0)}(z)\right) \nonumber\\
&+\sum_{l=1}^L \frac{1}{N_l} \sum_{i=1}^{N_l} \left[\psi\left(\theta_r, \qoi_{l}^{(i,l)}(z), \qoi_{z^k,l}^{(i,l)}(z)\right) - \psi\left(\theta_r, \qoi_{{l-1}}^{(i,l)}(z), \qoi_{{z^k,l-1}}^{(i,l)}(z)\right) \right], \label{eq:mlmc_est_1}
\end{align}
where $\qoi_{l}^{(i,l)}(z) \equiv \qoi_l(z;\omega^{(i,l)})$ and $\qoi_{{l-1}}^{(i,l)}(z) \equiv \qoi_{l-1}(z;\omega^{(i,l)})$ are correlated realisations of $\qoi_l(z)$ and $\qoi_{l-1}(z)$, respectively, typically obtained by solving the underlying differential problem on meshes with discretisation parameters $h_l$ and $h_{l-1}$, driven by the same realisation $\omega^{(i,l)}$ of the random parameters for the fixed design $z$. 
On the other hand, $\qoi_l^{(i,l)}$ and $\qoi_k^{(j,k)}$ are independent if $i \neq j$ or $l \neq k$.
Finally, $\qoi_{z^k,l}^{(i,l)}$ and $\qoi_{z^k,l-1}^{(i,l)}$ are the sensitivities of the realisations $\qoi_{l}^{(i,l)}$ and $\qoi_{l-1}^{(i,l)}$ respectively with respect to $z^k$.
$\{N_l\}_{l=0}^{L}$ is a decreasing sequence of sample sizes. 
The \gls{mlmc} hierarchy is hence defined by three parameters; namely the number of interpolation points $n$, the number of levels $L$ and the level-wise sample sizes $N_l$.

We finally construct a \gls{mlmc} estimator $\Psiek$ of the whole function $\Psik(\cdot;z):\Theta \to \setR$ by interpolating over the pointwise estimates as below:
\begin{align}
\Psiek(\cdot;z) = \interp{\Psiek(\thetab;z)},
\end{align}
where $\mathcal{S}_n$ denotes a uniform cubic spline interpolation operator and $\Psiek(\thetab;z)$ denotes the set of pointwise \gls{mlmc} estimates in Eq.~\eqref{eq:mlmc_est_1}, that is $\Psiek(\thetab;z) = \{\Psiek(\theta_1;z), \Psiek(\theta_2;z), \dots, \Psiek(\theta_n;z)\}$.
An estimate of the first derivative $\Psikone$ in $\theta$ is then obtained by computing the derivative of the resultant interpolated function, for each component $\Psiekone$:
\begin{align}
\Psiekone(\cdot;z) &\coloneqq \interpone{ \Psiek(\thetab;z) } \coloneqq \frac{\partial }{\partial \theta } \interp{ \Psiek(\thetab;z) }.\label{eq:mlmc_est_2}
\end{align}

\subsection{Estimation of the \gls{mse} of the gradient}
Since we have assumed that the gradient estimate $\objestgrad$ is a random vector in $\lp(\Omega,\setR^{d+1})$ with $p \geq 2$, we propose to quantify the error on the gradient in an \gls{mse} sense as follows:
\begin{align}
\mse{\objestgrad(\cdot, z_j)} \coloneqq \expec{\norm{\objest_{\theta}(\cdot, z_j) - \obj_{\theta}(\cdot, z_j) }_{\linf(\Theta)}^2} + \sum_{k=1}^d \expec{\norm{\objt_{z,k}(\cdot, z_j) - \obj_{z,k}(\cdot, z_j) }_{\linf(\Theta)}^2},\label{eq:grad_error_def}
\end{align}
where $\obj_{z,k}$ and $\objt_{z,k}$ denote the $k^{\text{th}}$ components of $\obj_z$ and $\objt_z$.

We now present a result relating $\mse{\objestgrad(\cdot,z_j)}$ to the \gls{mse} of the \gls{mlmc} estimators $\Phieone$ and $\Psieone$.
\begin{proposition}\label{prop:gradient_error_bound}
Let $\Phie(\cdot;z_j)$ and $\Psie(\cdot;z_j)$ denote the \gls{mlmc} estimators of $\Phi(\cdot;z_j)$ and $\Psi(\cdot;z_j)$ as defined in \cite{Ganesh2022a} and Eq.~\eqref{eq:mlmc_est_1} respectively.
Let $\objestgrad(\cdot,z_j)$ be the approximation to the true gradient $\objgrad(\cdot,z_j)$ computed using the estimates $\Phieone(\cdot;z_j)$ and $\Psieone(\cdot;z_j)$ at the $j^{\text{th}}$ optimisation iteration.
Let $\Psik$ and $\Psiek$ denote the $k^{th}$ component of $\Psi$ and $\Psie$ respectively, for $k \in \{1,...,d\}$.
Let the \glsplural{mse} on $\Phieone$ and $\Psiekone$ be defined as follows:
\begin{align}
\mse{\Phieone}(z_j) &\coloneqq \expec{\norm{\Phieone(\cdot;z_j)-\Phione(\cdot;z_j)}_{\linf(\Theta)}^2},\\
\mse{\Psiekone}(z_j) &\coloneqq \expec{\norm{\Psiekone(\cdot;z_j)-\Psikone(\cdot;z_j)}_{\linf(\Theta)}^2}, \label{eq:mse_psi_thm}
\end{align}
for the design $z_j \in \setR^d$.
Then, we have that:
\begin{align}
\mse{\objestgrad(\cdot,z_j)} = \mse{\Phieone}(z_j) + \sum_{k=1}^d \mse{\Psiekone}(z_j).\label{eq:grad_error_bound}
\end{align}
\end{proposition}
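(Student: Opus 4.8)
The plan is to substitute the parametric-expectation representations of the gradient components and of their MLMC-based approximations directly into the definition of $\mse{\objest_w(\cdot,z_j)}$ in Eq.~\eqref{eq:grad_error_def}, and then observe that the penalisation term cancels term by term, leaving exactly the MLMC errors on $\Phione$ and on the components $\Psikone$. No estimates are needed: the statement is an identity, and the proof is a matter of matching definitions.

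First I would treat the $\theta$-component. By Eq.~\eqref{eq:par_theta}, $\obj_\theta(\cdot,z_j)=\Phione(\cdot;z_j)$, and by construction the approximation produced in Section~\ref{sec:prob_form} is $\objest_\theta(\cdot,z_j)=\Phieone(\cdot;z_j)$ on $\Theta$. Hence $\objest_\theta(\cdot,z_j)-\obj_\theta(\cdot,z_j)=\Phieone(\cdot;z_j)-\Phione(\cdot;z_j)$ pointwise on $\Theta$, so taking the $\linf(\Theta)$-norm, squaring, and taking the expectation gives $\expec{\norm{\objest_\theta(\cdot,z_j)-\obj_\theta(\cdot,z_j)}_{\linf(\Theta)}^2}=\mse{\Phieone}(z_j)$.

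Next, for each $k\in\{1,\dots,d\}$, Eq.~\eqref{eq:par_sens_theta} gives $\obj_{z,k}(\cdot,z_j)=\Psikone(\cdot;z_j)+2\kappa(z_j^k-z_{ref}^k)$, while the approximation is $\objt_{z,k}(\cdot,z_j)=\Psiekone(\cdot;z_j)+2\kappa(z_j^k-z_{ref}^k)$, the penalisation term being the \emph{same exact} quantity in both. It therefore cancels in the difference, so that $\objt_{z,k}(\cdot,z_j)-\obj_{z,k}(\cdot,z_j)=\Psiekone(\cdot;z_j)-\Psikone(\cdot;z_j)$ $\omega$-wise on $\Theta$, whence $\expec{\norm{\objt_{z,k}(\cdot,z_j)-\obj_{z,k}(\cdot,z_j)}_{\linf(\Theta)}^2}=\mse{\Psiekone}(z_j)$ by Eq.~\eqref{eq:mse_psi_thm}. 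Adding the $\theta$-term to the $d$ component terms in Eq.~\eqref{eq:grad_error_def} then yields Eq.~\eqref{eq:grad_error_bound}.

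There is essentially no obstacle here. The only care needed is bookkeeping: using the correct definitions $\objest_\theta=\Phieone$ and $\objt_{z,k}=\Psiekone+2\kappa(z_j^k-z_{ref}^k)$ from Section~\ref{sec:prob_form} — recalling in particular that $\objt_z$ is intentionally \emph{not} $\partial_z\objest$ — and matching the norm and MSE conventions of Eqs.~\eqref{eq:grad_error_def} and \eqref{eq:mse_psi_thm}. I would close with the observation that this identity is precisely what reduces the accuracy requirement Eq.~\eqref{eq:gradient_error_bound_assumption} of Algorithm~\ref{alg:opti_mlmc_demo} to separately controlling $\mse{\Phieone}$ and each $\mse{\Psiekone}$, which motivates the constructions in the remainder of the section.
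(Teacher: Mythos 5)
Your proposal is correct and follows essentially the same route as the paper: substitute the parametric-expectation representations of $\obj_\theta$ and $\obj_{z,k}$ and their approximations into Eq.~\eqref{eq:grad_error_def}, note that the penalisation term $2\kappa(z_j-z_{ref})$ cancels in each $z$-component difference, and then take $\linf(\Theta)$-norms, expectations, and sum. Your write-up is in fact slightly more explicit than the paper's, which states the two pointwise-norm identities without spelling out the cancellation.
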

\begin{proof}
We first note that:
\begin{align}
\norm{ \objest_{\theta}(\cdot,z_j)- \obj_{\theta}(\cdot,z_j)}_{\linf(\Theta)}^2 &=  \norm{\hat{\Phi}'(\cdot;z_j)-\Phione(\cdot;z_j)}_{\linf(\Theta)}^2\\
\norm{ \objt_{z,k}(\cdot,z_j) - \obj_{z,k}(\cdot,z_j)}_{\linf(\Theta)}^2 &=  \norm{ \Psiekone(\cdot;z_j)-\Psikone(\cdot;z_j) }_{\linf(\Theta)}^2 
\end{align}
Adding together each of the contribuitons and taking the expectation on both sides, we have that:
\begin{align}
\mse{\objestgrad(\cdot,z_j)} = \mse{\Phieone}(z_j) + \sum_{k=1}^d \mse{\Psiekone}(z_j). 
\end{align}
\end{proof}
As was described earlier in this section, we seek to use the error estimation and adaptivity procedure described in \cite{Ganesh2022a} to accurately estimate $\Phione$ and $\Psikone$, and consequently, to accurately estimate the gradient $\objgrad$.
From Eq.~\eqref{eq:grad_error_bound}, it is evident that if one can control the \gls{mse} of $\Phieone$ and $\Psiekone$ in an $\linf$ sense, one can control the \gls{mse} on the gradient $\objgrad$ as defined in Eq.~\eqref{eq:grad_error_def}.
Specifically, the \gls{mse} of the gradient is equal to a simple sum of the \glsplural{mse} of the parametric expectations.
Eq.~\eqref{eq:grad_error_bound} hence allows us to use the work of \cite{Ganesh2022a} to accurately calibrate \gls{mlmc} estimators for the parametric expectations $\Phione$ and $\Psione$ such that the resultant gradient estimate is accurate up to a prescribed tolerance.

\subsection{Modified error estimation procedure} \label{sec:error_mod}
Since the error estimation procedure is independent of the design $z$, in the following, we drop the explicit dependence of $\Phi$ and $\Psi$ on $z$, with the dependence being implied.
We recall here that the error estimation procedure for estimating $\mse{\Phieone}$ is identical to that presented in \cite{Ganesh2022a}.
The procedure for estimating $\mse{\Psieone}$ however has several modifications from the procedure for $\Phieone$, that we detail in this section.
We recall that $\mse{\Psieone}$ was defined in Eq.~\eqref{eq:mse_psi_thm}.
Proceeding similarly as in \cite{Ganesh2022a}, we can bound $\mse{\Psieone}$ as follows:
\begin{align}
\mse{\Psiekone} \leq (\errest^{\Psi_k}_{i})^2 + (\errest^{\Psi_k}_{b})^2 + (\errest^{\Psi_k}_{s})^2, \label{eq:err_three_split}
\end{align}
where $\errest^{\Psi_k}_{i}$, $\errest^{\Psi_k}_{b}$ and $\errest^{\Psi_k}_{s}$ denote error estimators that estimate the error due to interpolation, the error due to approximation of the \gls{qoi} (i.e. bias error), and the error due to finite sampling (i.e. statistical error) respectively on $\hat{\Psi}_{L,k}$.
The reader is referred to \cite{Ganesh2022a} for a detailed discussion of each of the three errors components, as well as their corresponding estimators. 

The procedure for estimating the interpolation and bias errors requires the accurate estimation of $\theta$-derivatives of the function $\Psi_{l,k}(\theta) = \expec{\psi(\theta,\qoi_l, \qoi_{z^k,l})}$.
Although the true function $\Psi_{l,k}$ is smooth, replacing the true probability density with an empirical probability density corresponding to a Monte Carlo estimator implies that the right-hand side would be a linear combination of piecewise linear functions.
The first derivative of such a function would be piecewise constant, and high order derivatives would not exist in the discontinuity points, and would be zero otherwise.
A \gls{mlmc} hierarchy designed based on estimates obtained in this manner would lead to non-optimal complexity behaviour.
In \cite[Section~3.2]{Ganesh2022a}, a \gls{kde} based procedure was described for ameliorating this issue.
Although the error estimation procedure is broadly the same for estimating $\Psi$ as for $\Phi$, an important distinction arises with respect to this \gls{kde} procedure, which we detail in this section.

Since the issue chiefly relates to the regularity of the empirical Monte Carlo probability density, we propose the use a \gls{kde} based smoothing technique; namely, we replace the true joint density $p_l$ of $(\qoi_l, \qoi_{z^k,l})$ with a \gls{kde} smoothed joint probability density $p_l^{kde}$, which consists of a linear combination of two-dimensional kernels composed of products of two one-dimensional Gaussian kernels centred on each of the $N_l$ fine samples $\{(\qoi^{(i,l)}_l, \qoi^{(i,l)}_{z^k,l})\}_{i=1}^{N_l}$:
\begin{align}
\Psi_{l,k}(\theta) &= \int \int \psi (\theta,q,q_{z^k}) p_l(q,q_{z^k}) dq dq_{z^k}\\
&\approx \int \int \psi(\theta, q , q_{z^k}) p^{kde}_l(q,q_{z^k}) dq dq_{z^k}\\
&\coloneqq \frac{1}{N_l}\sum_{i=1}^{N_l} \int \int \psi(\theta,q , q_{z^k}) K_{\delta_l}(q,\qoi_l^{(i,l)})K_{\delta_{z^k,l}}(q_{z^k},\qoi_{z^k,l}^{(i,l)}) dq dq_{z^k}\\
&= - \frac{1}{N_l}\sum_{i=1}^{N_l}\int q_{z^k} K_{\delta_{z^k,l}}(q_{z^k},\qoi_{z^k,l}^{(i,l)}) dq_{z^k} \int \frac{(q-\theta)^+}{1-\tau} K_{\delta_l}(q,\qoi_l^{(i,l)})dq\\
&= - \frac{1}{N_l}\sum_{i=1}^{N_l} \qoi_{z^k,l}^{(i,l)} \int \frac{(q-\theta)^+}{1-\tau} K_{\delta_l}(q,\qoi_l^{(i,l)})dq \eqqcolon \mathbb{E}^{kde}_{l,k}\left[ \psi(\theta,\cdot,\cdot)\right]. \label{eq:kde_def}
\end{align}
Here, $K_{\delta_l}(\cdot, \mu)$ denotes a Gaussian kernel with mean $\mu$ and bandwidth parameter $\delta_l >0$, which is selected according to Scott's rule \cite{scott1979optimal} for the realisations $\{\qoi_{l}^{(i,l)}\}_{i=1}^{N_l}$ and controls the ``width'' of the kernel.
A closed form expression can be computed for the integral in Eq.~\eqref{eq:kde_def}, leading to the \gls{kde} smoothened approximation $\mathbb{E}^{kde}_{l,k}\left[ \psi(\theta,\cdot,\cdot)\right]$ for $\Psik$.

According to the procedure in \cite{Ganesh2022a}, the interpolation error requires the estimation of the quantity $\norm{\Psik^{(4)}}$, for which we use the \gls{kde} estimator described above.
To this end, we first select a level $\lceil L/2 \rceil$ from the \gls{mlmc} hierarchy; this choice of level is to ensure that $\hat{\Psi}_{\lceil L/2 \rceil,k}$ is sufficiently close to $\Psik$, and $N_{\lceil L/2 \rceil}$ is large enough for the \gls{kde} procedure to produce accurate estimates. 
We then construct the \gls{kde} approximation $\Upsilon_{\lceil L/2\rceil,k}(\theta) \coloneqq \mathbb{E}^{kde}_{\lceil L/2\rceil, k}\left[ \psi(\theta,\cdot,\cdot)\right]$.
The fourth derivative $\Upsilon^{(4)}_k$ is then constructed using a second order central finite difference scheme on a uniform grid on $\Theta$ with $n' \gg n$ points.
The norm is evaluated on the same grid as follows:
\begin{align}
\norm{\Psik^{(4)}}_{\linf(\Theta)} \approx \max_{i \in \{1,...,n'\}} \left|\Upsilon^{(4)}_{\lfloor L/2 \rfloor,k} (\theta_i)\right|.
\end{align}
For the bias error on $\Psiek$, we are required to estimate the quantity
\begin{align}
\norm{\interpone{\expec{\psi(\theta,\qoi_l,\qoi_{z^k,l})-\psi(\theta,\qoi_{l-1},\qoi_{z^k,l-1})}}}_{\linf(\Theta)}. \label{eq:true_bias}
\end{align}
Replacing the expectation by a Monte Carlo estimator leads to the same regularity issue as described earlier in this section. 
To smooth the empirical Monte Carlo density, we propose the use of a \gls{kde} smoothed approximation $p_{l,l-1}^{kde}$ to the true density $p_{l,l-1}$ of $(\qoi_l, \qoi_{z^k,l},\qoi_{l-1}, \qoi_{z^k,l-1})$, consisting of products of four one-dimensional Gaussian kernels:
\begin{align}
&\expec{\psi(\theta,\qoi_l,\qoi_{z^k,l})-\psi(\theta,\qoi_{l-1},\qoi_{z^k,l-1})} \\
&= \int \int \int \int \left[ \psi(\theta,q^f ,q^f_{z^k})-\psi(\theta,q^c ,q_{z^k}^c)\right] p_{l,l-1}(q^f,q^f_{z^k},q^c,q^c_{z^k}) dq^f dq^f_{z^k} dq^c  dq_{z^k}^c\\ 
&\approx\frac{1}{N_l}\sum_{i=1}^{N_l} \int \int \int \int \left[ \psi(\theta,q^f ,q^f_{z^k})-\psi(\theta,q^c ,q_{z^k}^c)\right] \nonumber\\
&\times K_{\delta_l}(q^f,\qoi_{l}^{(i,l)})K_{\delta_{z^k,l}}(q^f_{z^k},\qoi_{z^k,l}^{(i,l)}) K_{\delta_{l-1}}(q^c,\qoi_{l-1}^{(i,l)}) K_{\delta_{z^k,l-1}}(q^c_{z^k},\qoi_{z^k,l-1}^{(i,l)}) dq^f dq^f_{z^k} dq^c  dq_{z^k}^c\\ 
&=\frac{1}{N_l}\sum_{i=1}^{N_l} \qoi_{z^k,l-1}^{(i,l)} \int  \frac{(q^c-\theta)^+}{1-\tau} K_{\delta_{l-1}}(q^c,\qoi_{l-1}^{(i,l)}) dq^c - \qoi_{z^k,l}^{(i,l)} \int \frac{(q^f-\theta)^+}{1-\tau} K_{\delta_l}(q^f,\qoi_{l}^{(i,l)}) dq^f\\
&\eqqcolon \mathbb{E}^{kde}_{l,l-1,k}\left[\psi(\theta,\qoi_l,\qoi_{z^k,l})-\psi(\theta,\qoi_{l-1},\qoi_{z^k,l-1}) \right]. \label{eq:kde_bivariate}
\end{align}
The expectation in Eq.~\eqref{eq:true_bias} can be replaced by the \gls{kde} smoothened expectation in Eq.~\eqref{eq:kde_bivariate}, which can then be used in the bias error estimation procedure outlined in \cite{Ganesh2022a}. 
Lastly, the procedure for the statistical error follows the idea of bootstrapping developed in \cite{Ganesh2022a} identically without modification.

\subsection{Adaptive hierarchy selection procedure and \gls{cmlmc}-gradient descent algorithm}
We discuss in this section how to select the parameters of the \gls{mlmc} hierarchy; namely the number of interpolation points $n$, the level-wise sample sizes $N_l$ and the number of levels $L$. 
The aim is to select these parameters such that a prescribed tolerance can be obtained on the gradient estimate $\objestgrad$.
In what follows, we drop the dependence on $z$ for notational simplicity, with the dependence being implied.
We propose here a minor variation of the framework presented in \cite[Section~5]{Ganesh2022a}.
An adaptive strategy was proposed therein for the selection of the hierarchy parameters $n$, $L$ and $N_l$ for any statistic $s_{\tau}$, the \gls{mse} of whose estimator $\hat{s}_{\tau}$ could be bounded by a linear combination of \glsplural{mse} on $\Phie$ and its derivatives:
\begin{align}
\mse{\hat{s}_{\tau}} \leq c_0 \mse{\Phie} + c_1 \mse{\Phieone} + c_2 \mse{\Phietwo} , \quad c_0, c_1, c_2 > 0.
\end{align}
We first note that the same hierarchy adaptivity procedure extends trivially to any linear combination of \glsplural{mse} of $\Phie$, $\Psiek$, and their derivatives. 
Specifically, this includes the case of the \gls{mse} on the gradient $\objestgrad$ in Eq.~\eqref{eq:grad_error_bound}.
In addition, each of the \glsplural{mse} on the parametric expectations in Eq.~\eqref{eq:grad_error_bound} can be split into its three error contributions, similar to Eq.~\eqref{eq:err_three_split}, leading to the following error estimator for $\mse{\objestgrad(w)}$:
\begin{align}
\mse{\objestgrad} &= \mse{\Phieone} + \sum_{k=1}^d \mse{\Psiekone} \nonumber\\
&\leq \underbrace{\left((\errest^{\Phi}_i)^2 + \sum_{k=1}^d(\errest^{\Psi_k}_i)^2 \right)}_{\text{Squared interpolation error}} + \underbrace{\left((\errest^{\Phi}_b)^2 + \sum_{k=1}^d(\errest^{\Psi_k}_b)^2 \right)}_{\text{Squared bias error}}+ \underbrace{\left((\errest^{\Phi}_s)^2 + \sum_{k=1}^d(\errest^{\Psi_k}_s)^2 \right)}_{\text{Squared statistical error}}.\label{eq:err_est_all_par_exp}
\end{align}
Here, $\errest^{\Phi}_{i}$, $\errest^{\Phi}_{b}$ and $\errest^{\Phi}_{s}$ denote the interpolation, bias and statistical error estimators corresponding to $\mse{\Phieone}$.
Once in the above form, the procedure described in \cite{Ganesh2022a} for adapting the hierarchy parameters $n$, $L$ and $N_l$ for linear combinations of \glsplural{mse} can be extended trivially to the current case when combined with the modifications proposed in Section~\ref{sec:error_mod}.

Lastly, we comment that the above adaptive procedure is carried out within the framework of the \gls{cmlmc} algorithm presented in \cite{Ganesh2022a}.
The \gls{cmlmc} algorithm works by first simulating a small ``screening'' hierarchy with relatively few samples and levels.
The algorithm then adapts the hierarchy parameters with respect to a decreasing set of tolerances, of which the target tolerance is the final one.
The optimal parameters for a given tolerance in the sequence are computed based on estimates obtained from the optimal hierarchy for the previous tolerance, or the initial ``screening'' hierarchy.
In this way, the \gls{mlmc} estimator becomes robust to large variations in the estimates produced by an initial screening hierarchy.

We now possess all the ingredients required to tailor Algorithm~\ref{alg:opti_mlmc_demo} to the specific case in which an \gls{mlmc} procedure is combined with a \gls{cmlmc} algorithm to estimate the gradient up to a prsecribed tolerance. 
The algorithm is detailed below, and differs from Algorithm~\ref{alg:opti_mlmc_demo} in that the first estimate of the gradient is computed based on a screening hierarchy, and that successive gradients are computed such that the \gls{mse} on the gradient satisfies a tolerance equal to a fraction of the gradient magnitude from the previous iteration; namely, the right-hand side of Eq.~\eqref{eq:gradient_error_bound_assumption} is estimated using $\hat{\obj}^{j-1}_w(w_{j-1})$.
Another key difference to note is that in contrast to \gls{cmlmc} algorithm described in \cite{Ganesh2022a}, the screening hierarchy used to compute first estimates for the design $z_j$ is the optimal hierarchy used to accurately estimate the gradient for the design $z_{j-1}$.
In addition, the gradient at the first design point $z_0$ is estimated using an initial small fixed hierarchy.\\

\begin{algorithm}[H]
\begin{algorithmic}
  \STATE Input: Initial design $z_0$, iterate $j=0$, tolerance $0<\epsilon <1$, step size $\alpha > 0$ and $\eta > 0$.
  \STATE Set residual $r = \epsilon+1$
  \WHILE{$r > \epsilon$}
    \STATE \textbf{if} $j = 0$ $\{$ Simulate screening hierarchy $\}$ 
    \STATE \textbf{else} $\bigg\{$ Start \gls{cmlmc} from the optimal hierarchy for $z_{j-1}$; Simulate \gls{cmlmc} adapting hierarchy such that $\mse{\objestgrad(\cdot,z_j)} \leq \eta^2 \norm{\hat{\obj}^{j-1}_{w}(w_{j-1})}_{\sltwo}^2$ $\bigg\}$     
    \STATE Compute minimiser $\theta_j \in \argmin_{\theta \in \Theta} \objest(\theta,z_j) = \Phie(\theta,z_j)$
    \STATE Compute gradient $\objt_{z}(\theta_j, z_j) = \Psie'(\theta_j;z_j) + 2 \kappa (z_j-z_{ref})$
    \STATE Compute gradient step $z_{j+1} =  z_j - \alpha \objt_{z}(\theta_j, z_j)$ and $\objestgrad(w_j) = \left(\objest_\theta(\theta_j,z_j)=0, \objt_z(\theta_j,z_j)\right)$
    \STATE Set residual $r = \norm{\objestgrad(w_{j})}_{\sltwo}^2 / \norm{\tilde{\nabla} \hat{\obj}^{0}(w_{0})}_{\sltwo}^2$
    \STATE Update $j \leftarrow j + 1$
  \ENDWHILE
\end{algorithmic}
\caption{\gls{cmlmc}-gradient descent \gls{ouu} algorithm}
\label{alg:opti_cmlmc}
\end{algorithm}

\section{Numerical results}\label{sec:results}
\subsection{FitzHugh Nagumo oscillator}\label{sec:fhn}
To demonstrate the optimisation framework, we use the FitzHugh--Nagumo system described in \cite{FitzHugh1961a} and \cite{nagumo1962active}.
The FitzHugh--Nagumo model is a two dimensional simplification of the Hodgkin-Huxley model introduced by \cite{Hodgkin1952a}, which was originally proposed in the field of neuroscience to model the phenomenon of spiking neurons.
The dynamical equations read as follows:
\begin{align}
\left[ \begin{matrix} \dot{v} \\ \dot{w} \end{matrix} \right] = \left[ \begin{matrix} v - \frac{v^3}{3} - w +I \\ \zeta \left(v + a - b w\right) \end{matrix} \right], \quad \left[ \begin{matrix} v(t=0)\\w(t=0) \end{matrix} \right] = \left[ \begin{matrix} v^0\\w^0 \end{matrix} \right], \quad t \in [0,T],
\end{align}
where $[v(t), w(t)]^T \in \setR^2$ denotes the state variables and $a$, $b$, $\zeta$ and $I$ denote system parameters. 
Fig.~\ref{fig:osc_nullcline} shows a phase-space plot containing the $v$ and $w$-nullclines for a nominal value of the system parameters. 
The oscillator enters a limit cycle for parameter values such that the intersection of the two nullclines lies in the interval $v \in [-1,1]$, indicated by the black lines.
If the intersection lies exterior to this interval, then the oscillator eventually reaches the intersection and remains at a constant value of $v$ and $w$. 
Although initially proposed to model neuron behaviour, the FitzHugh--Nagumo model has seen widespread use in modelling wave phenomena in excitable media.
Examples include blood coagulation \cite{ermakova2005blood, lobanov2005effect} and cardio-electrophysiological phenomena \cite{breiten2014riccati}, wherein the optimal control of the model plays an important role in the application.
The reader is referred to \cite{uzunca2017optimal} for an overview of existing work on the modelling applications and optimal control of the FitzHugh--Nagumo system.

\begin{figure}[h]
\centering
\includegraphics[scale=0.6]{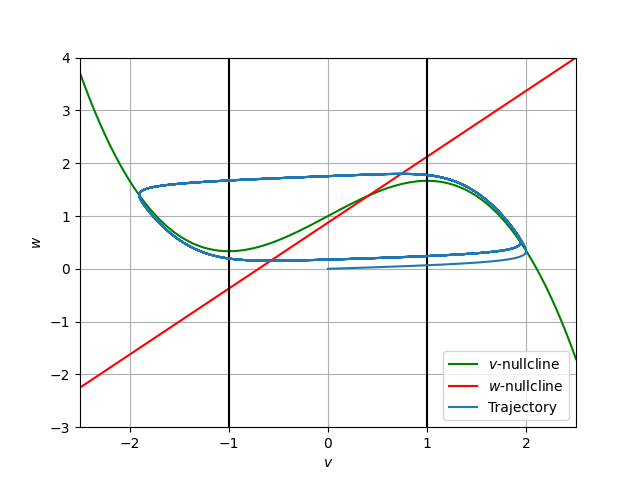}
\caption{FitzHugh--Nagumo oscillator dynamics}
\label{fig:osc_nullcline}
\end{figure}

In this work, we study the forced FitzHugh--Nagumo system:
\begin{align}
\left[ \begin{matrix} \dot{v} \\ \dot{w} \end{matrix} \right] = \left[ \begin{matrix} v - \frac{v^3}{3} - w +I +\sigma\dot{W}_1 \\ \zeta \left(v + a - b w\right) +\sigma\dot{W}_2 \end{matrix} \right], \quad \left[ \begin{matrix} v(t=0)\\w(t=0) \end{matrix} \right] = \left[ \begin{matrix} v^0\\w^0 \end{matrix} \right], \quad t \in [0,T], \label{eq:fhn_continuous}
\end{align}
where $\dot{W}_1$ and $\dot{W}_2$ are ``formal'' derivatives of standard Brownian paths and $\sigma = 0.01$ controls the noise strength.
To study the behaviour of the system, we propose the following \gls{qoi}:
\begin{align}
\qoi = \frac{1}{T} \int_0^T v^2(t) dt.
\end{align}
We are interested in minimising an objective function of the form in Eq.~\eqref{eq:opt_form_combined}, where we seek to minimise the \gls{cvar} with significance $\tau = 0.7$.
We denote by $z = [a, b, \zeta, I]^T$ the vector of design parameters with respect to which we want to carry out the optimisation, and seek to penalise deviations from the design $z_{ref}= [0.8,0.7,0.08,1.0]$.

We discretise the interval $[0,T]$ using a hierarchy of uniform grids $t_j = j \Delta t_l, j \in \{0,1,...,N_{T,l}\}$, with $\Delta t_l = T/N_{T,l}$ and $N_{T,l} = N_{T,0} 2^l$. 
We set $T=10$ and $N_{T,0} = 20$, and consider an Euler-Maruyama discretisation of Eq.~\eqref{eq:fhn_continuous}.
Using the notation $v^l_n$ to denote the approximation of $v(t_{n})$ at level $l$, the discretised system then reads:
\begin{align}
\left[ \begin{matrix} v^l_{n+1}\\ w^l_{n+1} \end{matrix} \right] &= \left[ \begin{matrix} v^l_{n}\\ w^l_{n} \end{matrix} \right] + \Delta t_l \left[ \begin{matrix} v^l_n - \frac{(v^l_n)^3}{3} - w^l_n +I \\ \zeta \left(v^l_n + a - b w^l_n \right) \end{matrix} \right] + \sigma \sqrt{\Delta t_l}\left[ \begin{matrix} \xi^l_{1,n} \\ \xi^l_{2,n} \end{matrix} \right] , \\
\left[ \begin{matrix} v^l_0\\w^l_0 \end{matrix} \right] &= \left[ \begin{matrix} v^0\\w^0 \end{matrix} \right], \quad n \in \{0,...,N_{T,l}-1\},
\end{align}
where $\xi^l_{1,n}$ and $\xi^l_{2,n}$ are independently drawn realisations of standard normal random variables.
The quantity of interest that we study is the following time average:
\begin{align}
\qoi = \frac{1}{T} \int_0^T v^2(t) dt \approx \sum_{n=0}^{N_{T,l}-1}\left( \frac{(v^l_n)^2 + (v^l_{n+1})^2}{2}\right) \frac{\Delta t_l}{T} \eqqcolon \qoi_l.
\end{align}
To compute the sensitivities $\qoi_{z,l}$, we utilize the method of adjoints.
We consider the corresponding adjoint variables $\lambda^l_n$ and $\nu^l_n$ corresponding to $v^l_n$ and $w^l_n$, $n \in \{1,...,N_{T,l}\}$ respectively.
The adjoint equation reads as follows:
\begin{align}
\left[ \begin{matrix} \lambda^l_n \\ \nu^l_n \end{matrix} \right] &= \left[ \begin{matrix} \lambda^l_{n+1} \\ \nu^l_{n+1} \end{matrix} \right] + \Delta t_l \left( \left[ \begin{matrix} (1-(v^l_n)^2) & \zeta \\ -1 & -\zeta b \end{matrix} \right] \left[ \begin{matrix} \lambda^l_{n+1} \\ \nu^l_{n+1} \end{matrix} \right] + \left[ \begin{matrix} \frac{2 v^l_n}{T} \\ 0 \end{matrix} \right] \right), \\
\left[ \begin{matrix} \lambda^l_{N_{T,l}} \\ \nu^l_{N_{T,l}} \end{matrix} \right] &= \Delta t_l \left[ \begin{matrix} \frac{v^l_n}{T} \\ 0 \end{matrix} \right], \quad n \in \{1,...,N_{T,l}-1\}.
\end{align}
The reader is referred to Appendix~\ref{sec:proof-oscillator} for the details of the derivation.

Once the adjoint equation is solved backwards in time, the approximation $\qoi_{z,l}$ of the sensitivities $\qoi_{z}$ at level $l$ can then be obtained as follows:
\begin{equation}
\begin{aligned}
\qoi_{a,l} = \sum_{n=0}^{N_{T,l}-1} \Delta t_l \zeta \nu^l_{n+1}, & \qquad \qoi_{b,l} = -\sum_{n=0}^{N_{T,l}-1} \Delta t_l \zeta w^l_n \nu^l_{n+1}, \\
\qoi_{I,l} =\sum_{n=0}^{N_{T,l}-1} \Delta t_l \lambda^l_{n+1} , & \qquad  \qoi_{\zeta,l} = \sum_{n=0}^{N_{T,l}-1} \Delta t_l (v^l_n + a - bw^l_n) \nu^l_{n+1}.
\end{aligned}\label{eq:adjoint_osc}
\end{equation}

To demonstrate the performance of Algorithm~\ref{alg:opti_cmlmc}, we assess the performance individually of its two components; firstly, the performance of the \gls{cmlmc} algorithm, the error estimation procedure and the adaptive strategy described in Section~\ref{sec:grad_est_mlmc} for accurately estimating the gradient for a given design, and secondly, the gradient based optimisation procedure described in Algorithm~\ref{alg:opti_cmlmc}.
We first assess the performance of the \gls{cmlmc} algorithm and adaptive strategy.
We remark that the solution of the forward and adjoint problems, as well as the \gls{cmlmc} procedure, are implemented within the XMC software library \cite{ExaQUte_XMC}, which we use for the simulations presented herein.

We seek to accurately estimate the gradient $\objgrad(\cdot,z_0)$ using the estimator $\hat{\obj}_{w}(\cdot,z_0)$, where $z_0 = [0.7, 0.8, 0.08, 1.0]$ and we set $\tau = 0.70$ for the significance of the \gls{cvar}.
The gradient and gradient error are estimated using the \gls{mlmc} procedure described in Sections~\ref{sec:grad_est_mlmc}.
To assess the reliability of the error bound derived in Proposition~\ref{prop:gradient_error_bound}, we run a reliability study wherein we adapt the parameters of the \gls{mlmc} hierarchy to attain a prescribed tolerance on $\mse{\hat{\obj}_{w}(\cdot,z_0)}$. 
We run the \gls{mlmc} algorithm 20 times for each tolerance tested and compare the estimated error to the true error obtained using a reference gradient computed using a Monte Carlo estimator with $2 \times 10^5$ samples and $2 \times 10^4$ time steps.
Specifically, we are interested in assessing the tightness of the inequality in Eq.~\eqref{eq:err_est_all_par_exp}.

The resultant plot is shown in Fig.~\ref{fig:fhn_reliability}.
Three errors are plotted in Fig.~\ref{fig:fhn_reliability}; namely, the true error on the gradient, defined in the $\linf$ sense, corresponding to the term on the leftmost side of Eq.~\eqref{eq:err_est_all_par_exp}, the square root of the \gls{mse} estimate on the gradient, produced by the optimally calibrated \gls{mlmc} hierarchy, corresponding to the term on the rightmost side of Eq.~\eqref{eq:err_est_all_par_exp}, and the true error on the gradient evaluated at the point $(\theta_0,z_0)$, where $\theta_0$ corresponds to the $70\%$-\gls{var} for the design $z_0$.
The true errors are computed with respect to a reference solution computed using $2 \times 10^5$ samples and $2 \times 10^4$ time steps.
As can be seen from the figure, the \gls{mse} estimator provides a tight bound on the true error on the parametric expectations.
However, the true error on the gradient in the $\linf$ sense is much larger than the true pointwise error.
This is a natural consequence of using the $\linf$-norm over the entire interval $\Theta$ to define the \gls{mse}, as compared to using the pointwise error. 
Controlling the \gls{mse} error in an $\linf$ sense, as defined in Eq.~\eqref{eq:grad_error_def}, is necessitated by the error accuracy condition in Eq.~\eqref{eq:gradient_error_bound_assumption}, in order to ensure Q-linear convergence of Algorithm~\ref{alg:opti_cmlmc}.

Fig.~\ref{fig:fhn_complexity} shows the complexity behaviour of the \gls{mlmc} estimator calibrated using the \gls{cmlmc} algorithm.
We compute the cost required to obtain the final optimal hierarchy for a given tolerance $\epsilon^2$ on $\mse{\hat{\obj}_{w}(\cdot,z_0)}$.
As can be seen from the figure, the cost grows as $\epsilon^{-2}$, which is the theoretically predicated best case performance for the \gls{mlmc} estimator.
For comparison, we also plot the estimated cost of a comparable Monte Carlo estimator, as well as the expected cost growth rate for the case of the first order time discretisation used here.
The Monte Carlo reference cost is computed as described in \cite{Ganesh2022a}.

\begin{figure}[h]
  \centering
  \begin{subfigure}{0.48\textwidth}
    \centering
    \includegraphics[width=\textwidth]{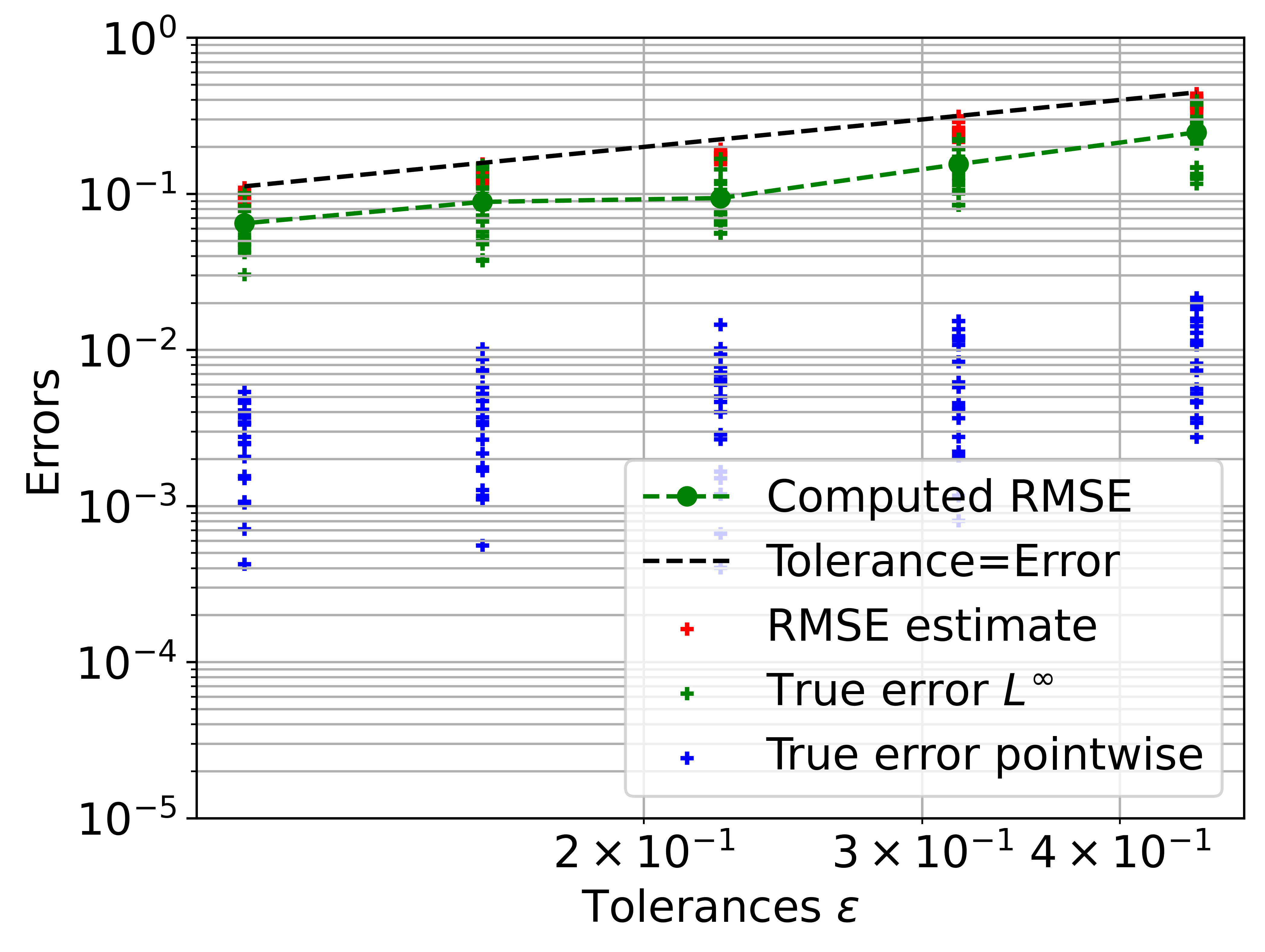}
    \caption{Reliability of error estimator}
    \label{fig:fhn_reliability}
  \end{subfigure}
  \begin{subfigure}{0.48\textwidth}
    \centering
    \includegraphics[width=\textwidth]{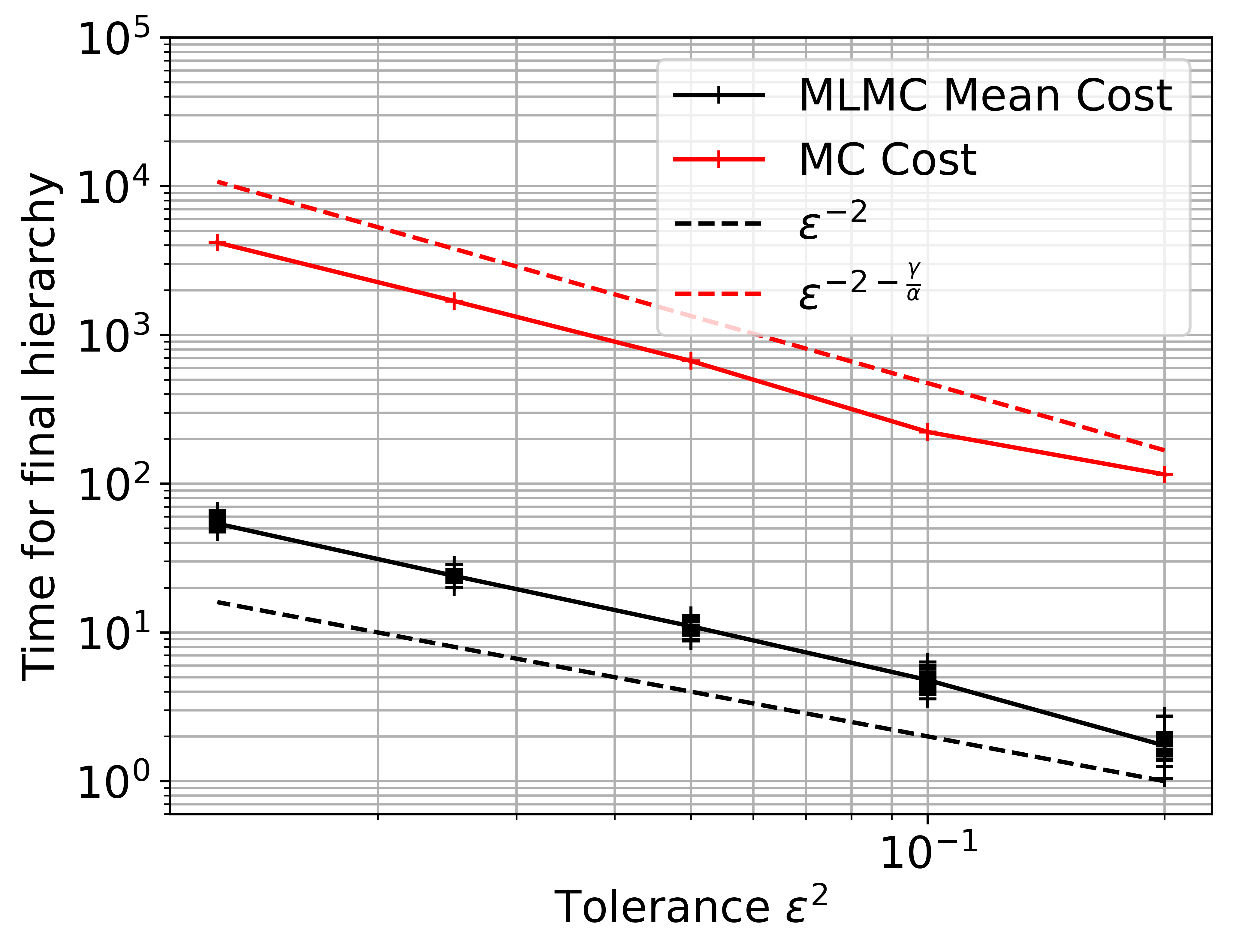}
    \caption{Complexity behaviour}
    \label{fig:fhn_complexity}
  \end{subfigure}
  \caption{Error estimator performance for the \gls{cmlmc} estimator of the gradient for the FitzHugh--Nagumo system}
  \label{fig:fhn_performance}
\end{figure}

We now examine the performance of the gradient descent algorithm proposed in Section~\ref{sec:conv_novel_alg}.
We are interested in solving the minimisation problem given in Eq.~\eqref{eq:opt_form_combined}, with $\tau = 0.7$.
We utilize the framework of Algorithm~\ref{alg:opti_cmlmc}, with a tolerance $\epsilon = 0.01$ on the gradient ratio.
This implies that we stop the algorithm once the gradient magnitude has dropped to $1/100^{\text{th}}$ of its initial magnitude.
As an initial guess, we begin with the design $z_0 = [0.7, 0.8, 0.08, 1.0]$.
We also set $z_{ref} = [0.7, 0.8, 0.08, 1.0]$.
We combine the above with the \gls{cmlmc} algorithm detailed in \cite{Ganesh2022a} and detailed further in Section~\ref{sec:grad_est_mlmc}, with $\eta = 0.2$ on the relative error on the gradient.

We plot in Fig.~\ref{fig:fhn_obj_func} the value of the objective function for different iterations of the objective function. 
We observe Q-linear convergence in the number of iterations towards the final value, as predicted by Theorem~\ref{thm:approx_convergence}, although we cannot guarantee that the hypotheses of Theorem~\ref{thm:approx_convergence} are satisfied for this problem.
Fig.~\ref{fig:fhn_grad_ratio} shows the value of the gradient ratio $r$ for different iterations of the optimisation algorithm. 
We also observe that the gradient decreases Q-linearly.
Lastly, we plot in Fig.~\ref{fig:fhn_cdf} the \gls{cdf} of the output \gls{qoi} $\qoi(z_j,\cdot)$ computed at different iterations of the optimisation algorithm, as well as the predicted \gls{var} and \gls{cvar} values.
We observe that the \gls{cdf}, the \gls{var} and the \gls{cvar} all move left, reducing the mass in the right tail of the distribution.
Since we are minimising the \gls{cvar}, defined as the expectation of the random variable above the \gls{var}, this translates to moving the right tail of the distribution as much as possible to the left. 

\begin{figure}[h]
  \centering
    \begin{subfigure}{0.32\textwidth}
    \centering
    \includegraphics[width=\textwidth]{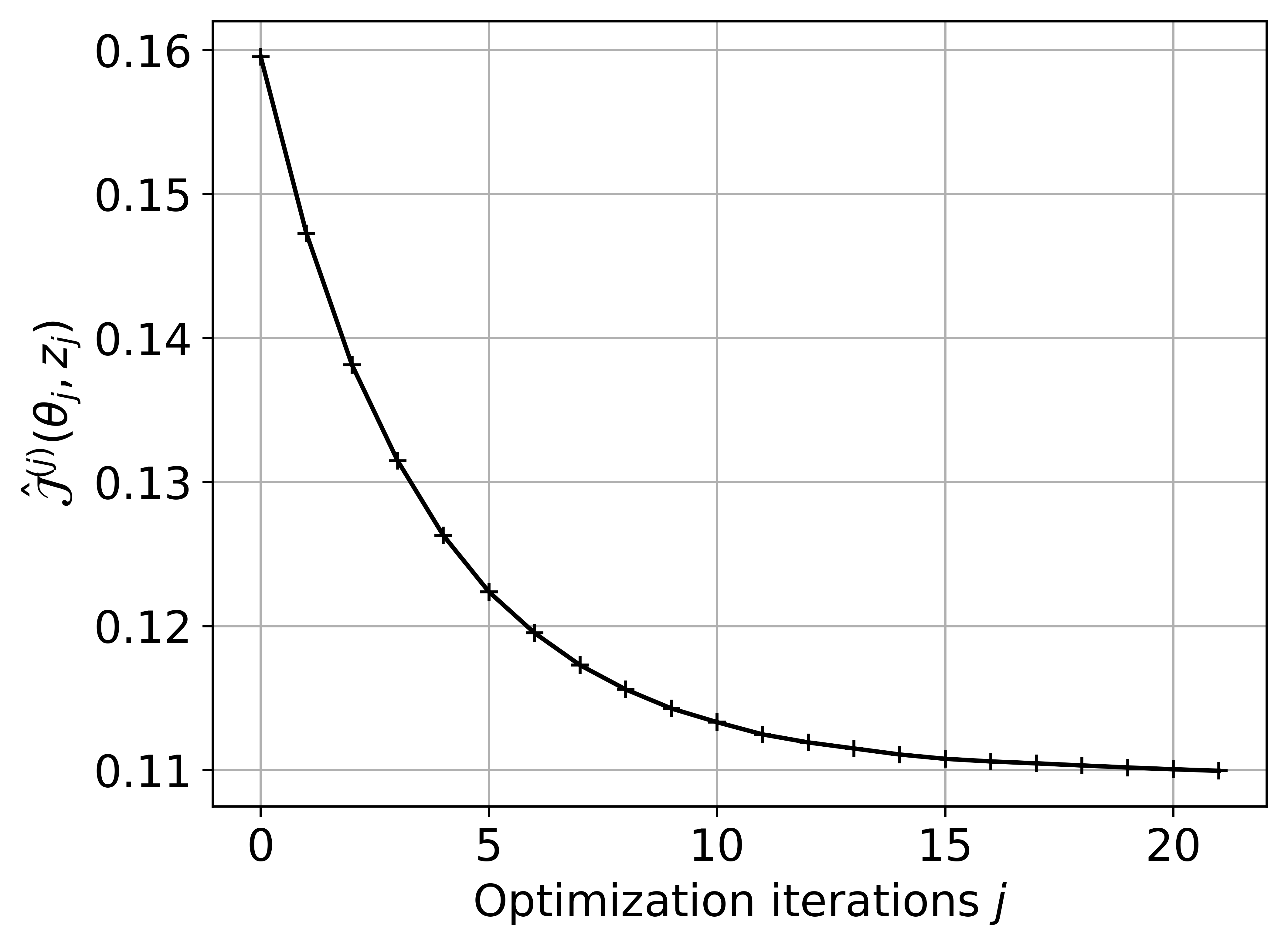}
    \caption{Objective function decay.}
    \label{fig:fhn_obj_func}
  \end{subfigure}
  \begin{subfigure}{0.32\textwidth}
    \centering
    \includegraphics[width=\textwidth]{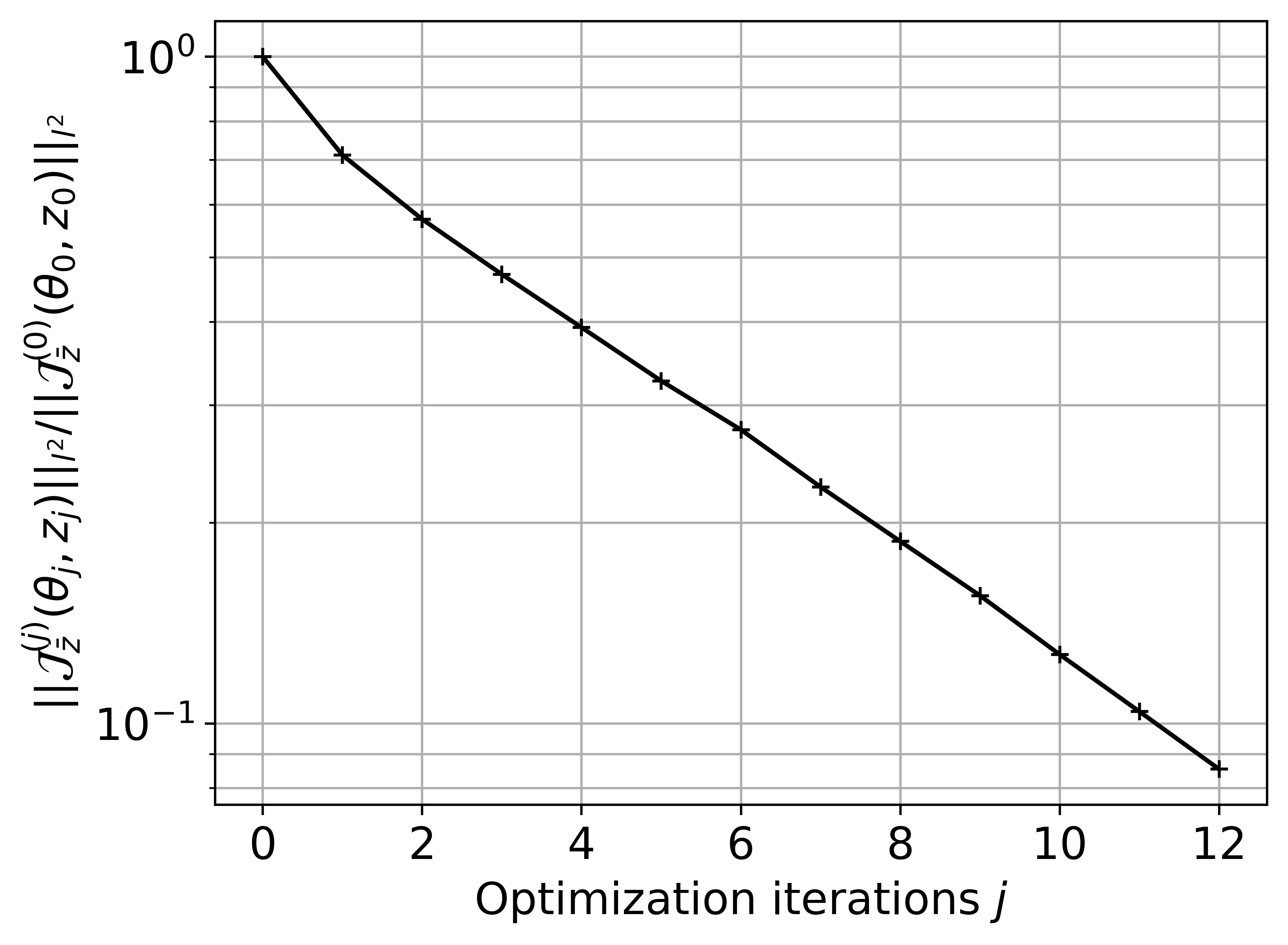}
    \caption{Gradient ratio decay.}
    \label{fig:fhn_grad_ratio}
  \end{subfigure}
  \begin{subfigure}{0.32\textwidth}
    \centering
    \includegraphics[width=\textwidth]{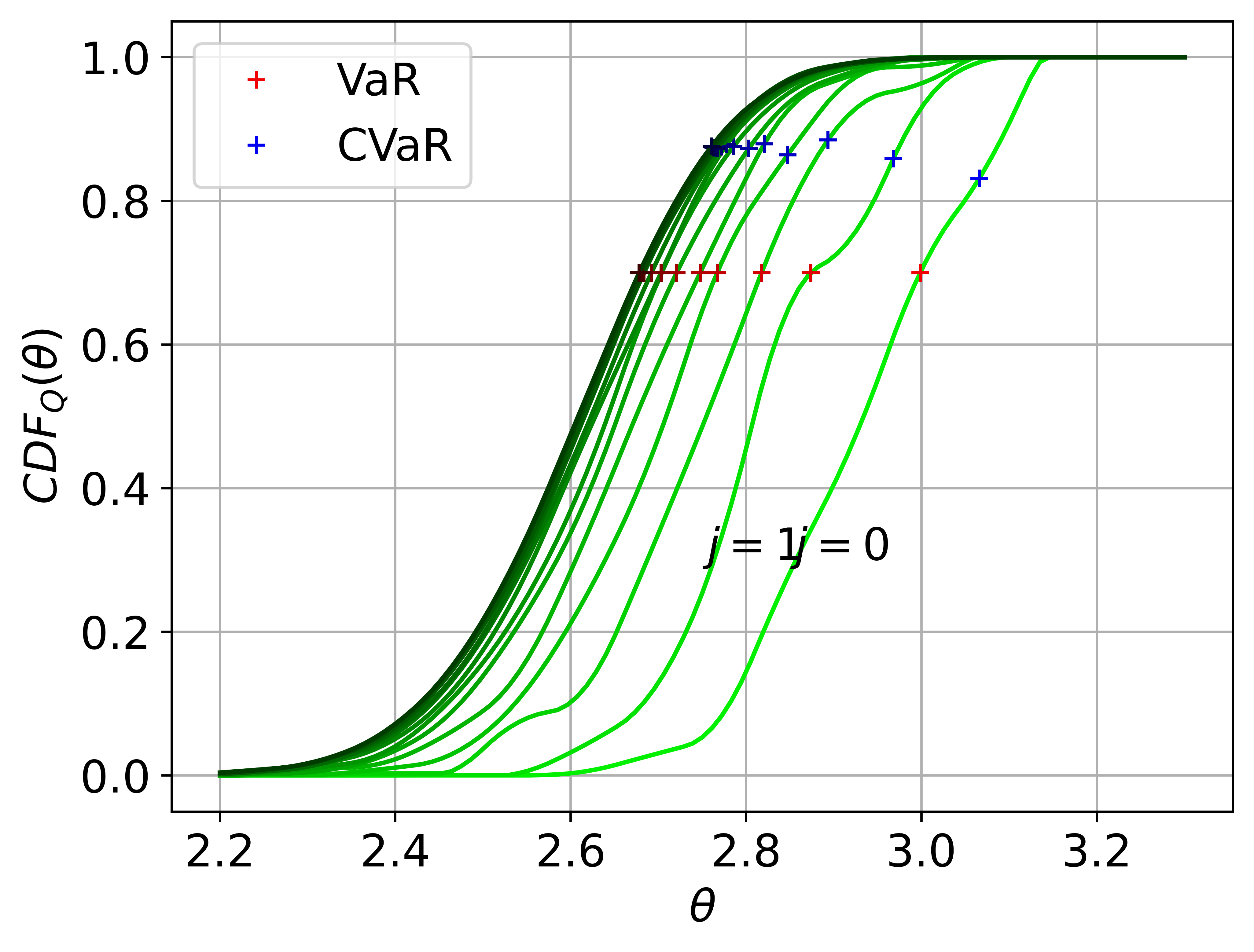}
    \caption{Change in \gls{cdf}.}
    \label{fig:fhn_cdf}
  \end{subfigure}
  \caption{Performance of Algorithm~\ref{alg:opti_cmlmc} over different iterations for the FitzHugh--Nagumo system}
  \label{fig:fhn_opti_performance}
\end{figure}

Fig.~\ref{fig:fhn_hierarchy} shows the optimal hierarchy produced by the \gls{cmlmc} algorithm at each iteration of the optimisation.
We observe that since the tolerance supplied to the \gls{cmlmc} algorithm is a fraction of the gradient magnitude, the optimally tuned hierarchy becomes larger for later iterations of the optimisation. 
In addition, Fig.~\ref{fig:fhn_opti_complexity} shows the cumulative cost required for the optimisation algorithm to reach a given gradient magnitude.
The cumulative cost at a given optimisation iteration is defined as the sum of costs of all optimal hierarchies until the current optimisation iteration.
Specifically, the cumulative cost is computed as $\sum_{i=0}^j \sum_{l=0}^L N_l^{(i)} ( \cost{\qoi_l} +  \cost{\qoi_{l-1}} )$, where $\{N_l^{(i)}\}_{l=0}^L$ denote the optimal level-wise sample sizes for the $i$\textsuperscript{th} optimisation iteration and $\cost{\qoi_l}$ denotes the average cost of simulating one sample of $\qoi_l$.
This cost is plotted versus the gradient magnitude.
We observe that after an initial pre-asymptotic regime, the cumulative cost grows as $\norm{\objestgrad(w_j)}_{\sltwo}^{-2}$, a rate comensurate with the use of an optimally tuned \gls{mlmc} hierarchy at each iteration tuned to obtain a tolerance proportional to $\norm{\objestgrad(w_j)}_{\sltwo}$.

\begin{figure}[h]
  \centering
  \begin{subfigure}{0.48\textwidth}
    \centering
    \includegraphics[width=\textwidth]{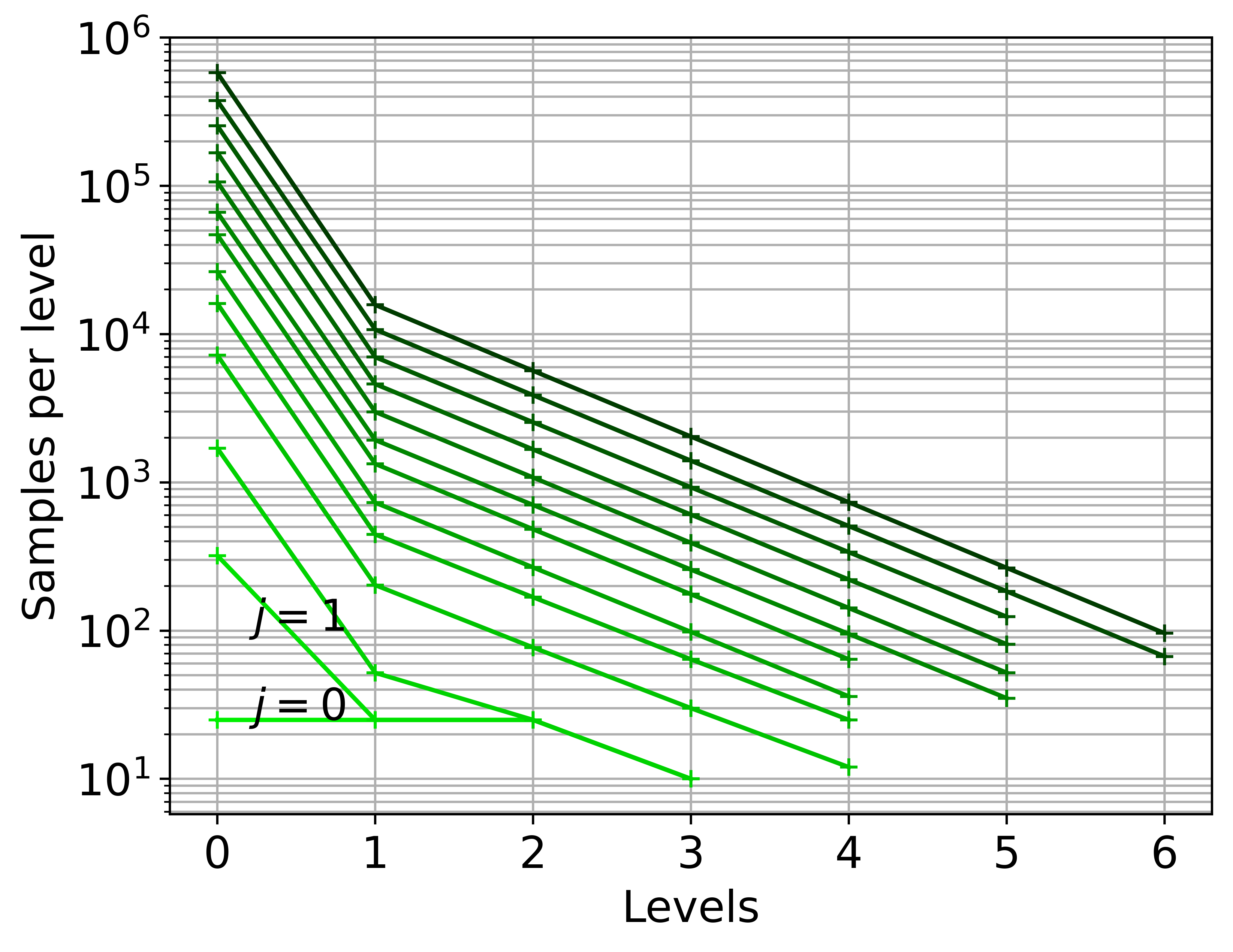}
	\caption{Level-wise sample sizes}
    \label{fig:fhn_hierarchy}
  \end{subfigure}
  \begin{subfigure}{0.48\textwidth}
    \centering
    \includegraphics[width=\textwidth]{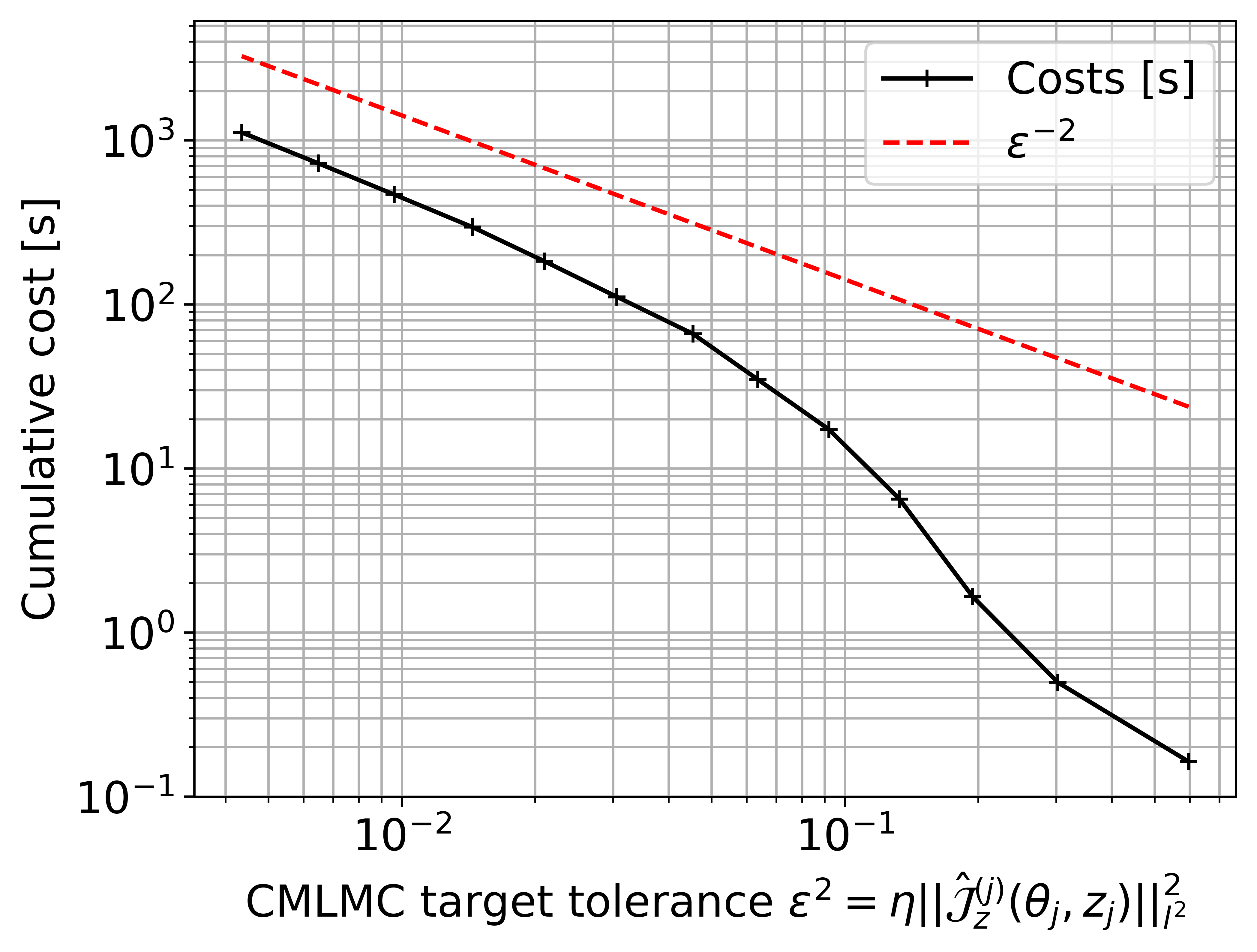}
	\caption{Cumulative cost}
    \label{fig:fhn_opti_complexity}
  \end{subfigure}
  \caption{Hierarchy of \gls{cmlmc} estimators and complexity behaviour of Algorithm~\ref{alg:opti_cmlmc} for different iterations for the FitzHugh--Nagumo system}
  \label{fig:fhn_comp_performance}
\end{figure}

\subsection{Pollutant transport problem}
We now apply the methodology to a more applied problem of practical relevance.
A problem of pollutant transport is studied, where the concentration of pollutant in a domain is modelled using a steady reaction-diffusion-advection equation.
We consider a square domain $D = (0,1) \times (0,1)$, with boundary $\partial D \coloneqq \Gamma_d \cup \Gamma_n$, where $\Gamma_d \coloneqq \{0\} \times (0,1)$ and $\Gamma_n \coloneqq \partial D \setminus \Gamma_d$.
We denote by $u : D \times \setR^9 \times \Omega\to \setR$ the concentration of the pollutant.
The concentration satisfies the following equation:
\begin{align}
-\nabla \cdot (\epsilon \nabla u(x,z,\omega)) + \mathbb{V}(x,\omega) \cdot \nabla u(x,z,\omega) = f(x) - B(x,z), \quad x \in D,
\end{align}
subject to the following boundary conditions:
\begin{align}
\epsilon \frac{\partial u}{\partial n}(x,z,\omega) &= 0, \quad x \in \Gamma_n, \quad \text{for }\measure-\text{a.e. } \omega \in \Omega\\
u(x,z,\omega) &= 0, \quad x \in \Gamma_d, \quad \text{for }\measure-\text{a.e. } \omega \in \Omega,
\end{align}
where $\epsilon > 0$ denotes a viscosity parameter.
$\mathbb{V}(x,\omega)$ is a random divergence-free velocity field defined as follows:
\begin{align}
\mathbb{V}(x,\omega) \coloneqq \left[ \begin{matrix}
b(\omega) - a(\omega) x_1 \\ a(\omega) x_2
\end{matrix}\right],
\end{align}
where $a \sim \mathcal{U}[4.95,5.05]$ and $b \sim \mathcal{U}[3.95,3.05]$ are uniformly distributed random variables, and $x_1$ and $x_2$ denote the components of $x$.
The source $f(x)$ is the sum of five Gaussian source terms:
\begin{align}
f(x) = \sum_{i=1}^5 s_i \exp\left(- \frac{(x-
\mu_i)^T(x-\mu_i)}{2\sigma_i^2}\right),
\end{align}
where the values of $s_i$, $\mu_i$ and $\sigma_i$ are given in Table~\ref{tab:source_params}.
The sink term $B(x,z)$ is defined as follows:
\begin{align}
B(x,z) = \sum_{k=1}^9 z^k \exp\left(- \frac{(x-p_k)^T(x-p_k)}{2\sigma^2}\right),
\end{align}
where the locations $p_k$ are defined as $p_k = (0.25i,0.25j), i,j \in \{1,2,3\}, k = 3(i-1)+j$, $\sigma = 0.05$, and $z^k$ denotes the $k^{\text{th}}$ component of $z \in \setR^9$.
We are interested in studying the distribution of the random \gls{qoi} $\qoi$, defined as follows:
\begin{align}
\qoi(z,\omega) \coloneqq \frac{\kappa_s}{2} \int_D u^2(x,z,\omega) dx,
\end{align}
with $\kappa_s = 10^4$.

\begin{table}[ht]
\centering
\begin{tabular}{cccc}
	\toprule
	$i$ & $\mu_i $ & $\sigma_i$ & $s_i$\\
	\midrule
	$1$ & $ [0.55205319,0.65571641]^T $ & $0.0229487$ & $2.3220339$\\
	$2$ & $ [0.49379544,0.10950509]^T $ & $0.0205321$ & $1.7931427$\\
	$3$ & $ [0.13032797,0.57569277]^T $ & $0.0196891$ & $2.3522452$\\
	$4$ & $ [0.33868732,0.37971428]^T $ & $0.0212297$ & $2.2850373$\\
	$5$ & $ [0.27670822,0.15833522]^T $ & $0.0227373$ & $2.3194400$\\
	\bottomrule
\end{tabular}
\caption{Source term parameters for the pollutant transport problem}
\label{tab:source_params}
\end{table}

The problem is implemented using the FEniCS finite element software \cite{logg2012automated}.
The domain is discretised using a uniform triangular mesh with piecewise linear finite elements.
The resultant linear system is solved using a sparse direct solver \cite{MUMPS:1, MUMPS:2}.
The number of elements per side of the square domain varies as $32 \times 2^{l/2}, \; l \in \{0,1,...,L\}$, leading to a mesh size $h_l$ that varies as $h_l = h_0 \times 2 ^{-l}$.
An in-built automatic differentiation module within the FEniCS library is used to compute the sensitivities of the \gls{qoi} with respect to design parameters.
Once again, the XMC software library \cite{ExaQUte_XMC} is used to implement the \gls{cmlmc} procedure.

Similar to Section~\ref{sec:fhn}, we seek to examine both parts of the optimisation algorithm; namely the \gls{cmlmc} and the gradient based \gls{ouu} algorithm.
For the \gls{cmlmc}, we seek to accurately estimate $\hat{\obj}_w(\cdot,z_0)$, where $z_0 = [0.1]^9$, such that $\mse{\hat{\obj}_w(\cdot,z_0)}$ satisfies a prescribed tolerance.
Fig.~\ref{fig:pol_performance} shows the results of reliability and complexity studies conducted for the above parameters, similar to the one conducted for the FitzHugh--Nagumo system in Section~\ref{sec:fhn}.
For studying the reliability of the error estimators, we conduct 20 independent \gls{cmlmc} simulations for a given tolerance. 
For each simulation, we plot three errors; namely the true $\linf$ error on the gradient, the square root of the \gls{mse} estimate produced by our error estimation procedure described in Section~\ref{sec:grad_est_mlmc}, and the true pointwise error on the gradient, computed by evaluating the parametric expectations $\hat{\obj}_w(\cdot,z_0)$ for the gradient at $\theta_0$, the \gls{var} corresponding to the design $z_0$.
The reference value of the gradient is computed by first running 20 simulations for a tolerance that is half of the finest tested tolerance, and averaging over the gradient estimates produced by these simulations.
Similar to before, we find that although our novel error estimators provide a tight bound on the true $\linf$ error of the gradient, the $\linf$ error on the gradient is significantly larger than the error on the gradient evaluated at $\theta_0$.
Fig.~\ref{fig:pol_complexity} presents the complexity results of the \gls{cmlmc} algorithm.
The cost to compute the optimal hierarchy for a given tolerance $\epsilon^2$ on $\mse{\objestgrad(\cdot, z_0)}$ is plotted versus the tolerance, for each of the 20 \gls{cmlmc} simulations at a given tolerance, in addition to their sample average value. 
In addition, the theoretical cost growth rate of a comparable Monte Carlo estimator is shown, as well as the estimated cost of the estimator for reference and comparison. 
The Monte Carlo reference cost is computed as described in \cite{Ganesh2022a}.
As can be seen from the figure, the complexity follows the theoretically predicted complexity $\epsilon^{-2}$.

\begin{figure}[ht]
  \centering
  \begin{subfigure}{0.48\textwidth}
    \centering
    \includegraphics[width=\textwidth]{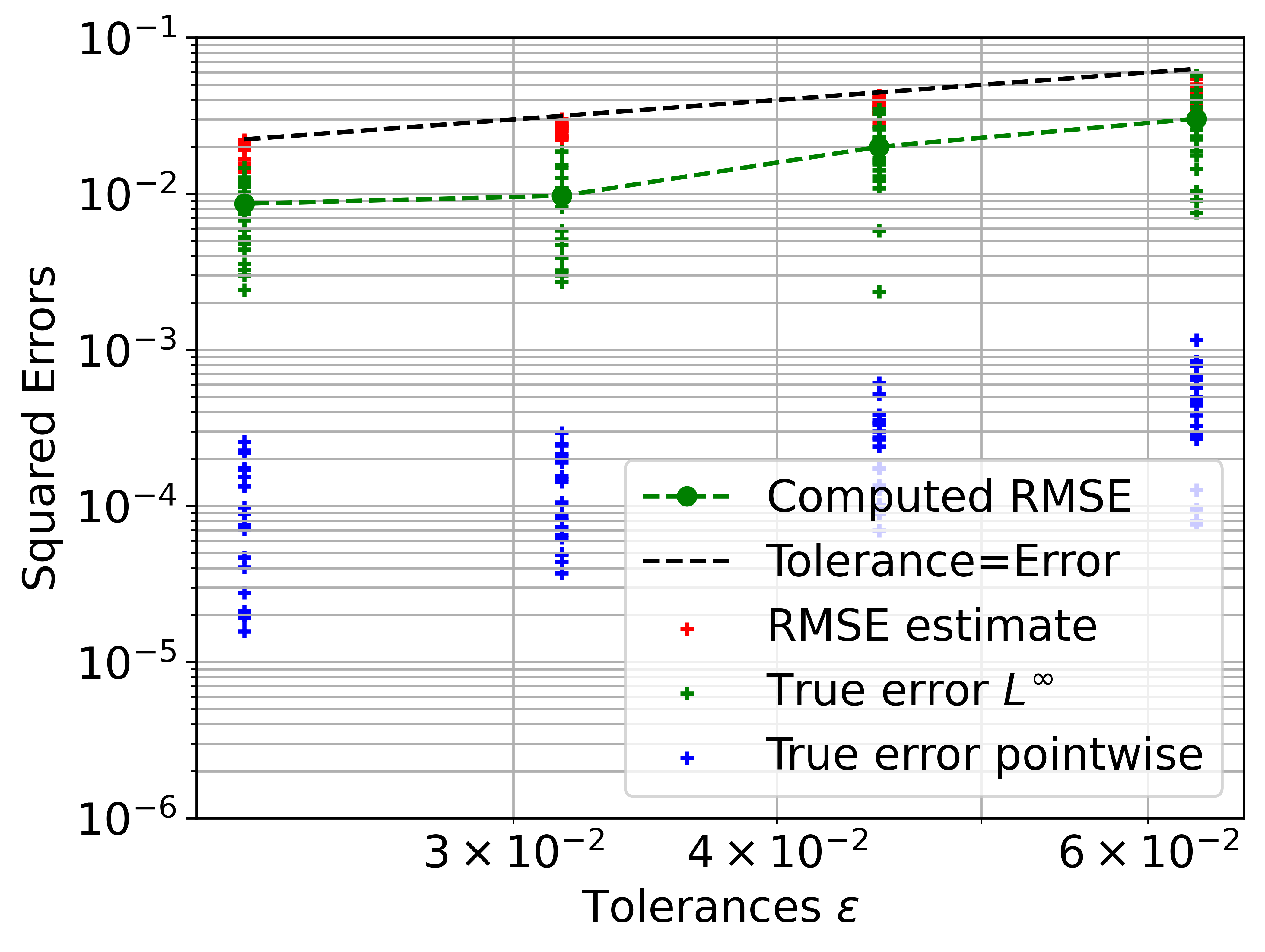}
    \caption{Reliability of error estimator}
    \label{fig:pol_reliability}
  \end{subfigure}
  \begin{subfigure}{0.48\textwidth}
    \centering
    \includegraphics[width=\textwidth]{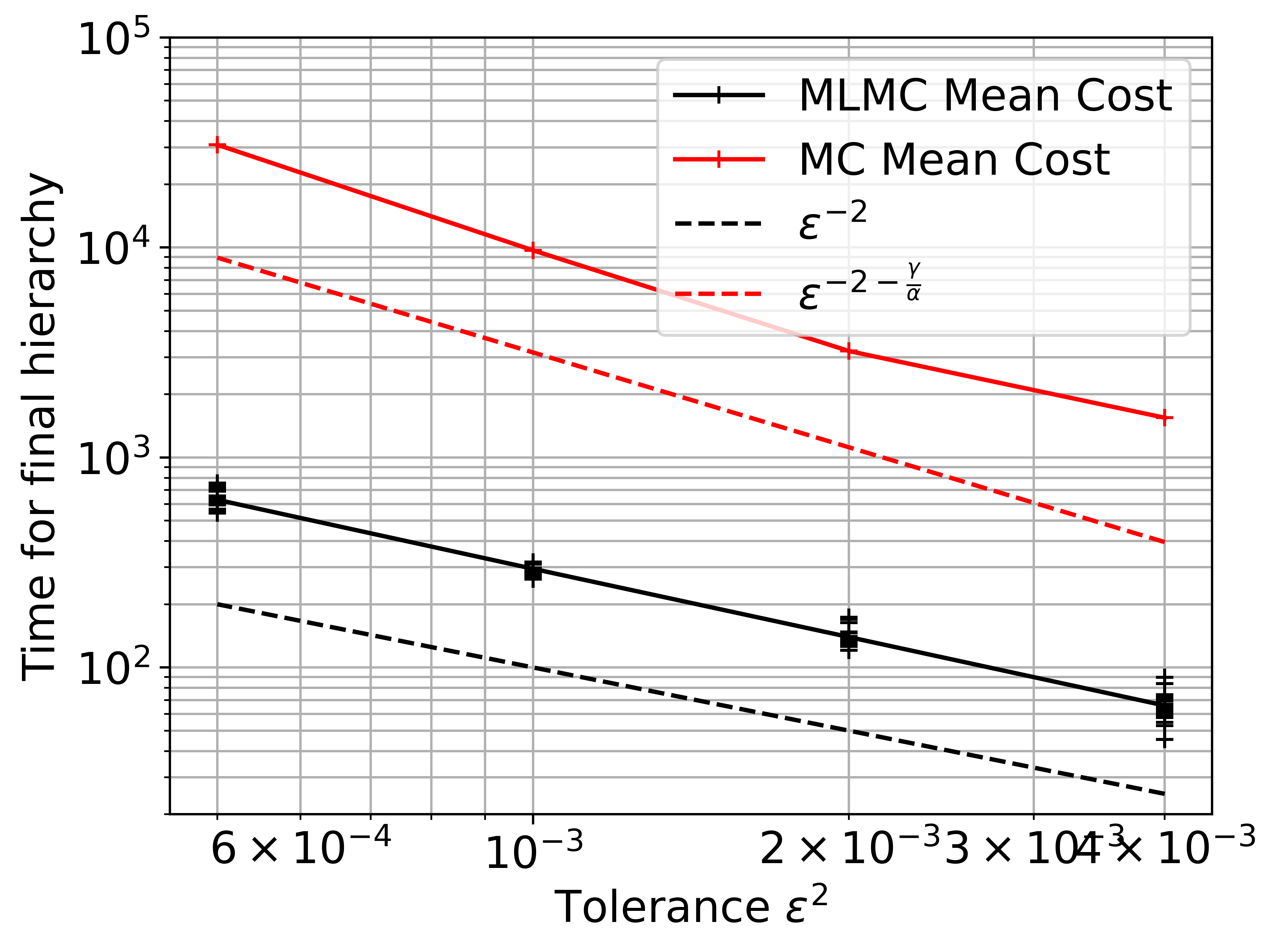}
    \caption{Complexity behaviour}
    \label{fig:pol_complexity}
  \end{subfigure}
  \caption{Error estimator performance for the pollutant transport problem}
  \label{fig:pol_performance}
\end{figure}

For the \gls{ouu}, we wish to minimise an objective function of the form in Eq.~\eqref{eq:opt_form_combined}, with $z_{ref} = 0$ and for significance of $\tau = 0.7$.
This implies that we seek to minimise the \gls{cvar} while also minimising the amplitude of the controlled sinks.
We utilise Algorithm~\ref{alg:opti_cmlmc}, starting from a design $z_0 = [0.1]^9$, and halt the optimisation once a gradient ratio of $r=0.08$ has been achieved.
In Fig.~\ref{fig:pol_simulation}, we show the source field $f(x)$, the control field $B(x,z^*)$ and the solution $u(x,z^*,\omega)$ for the mean conditions $a(\omega) = 4$ and $b(\omega) = 5$ at the optimal control $z^*$ obtained by solving problem~\eqref{eq:opt_form_combined}.

\begin{figure}[H]
\centering
\begin{subfigure}{.31\textwidth}
    \centering
    \includegraphics[width=\textwidth]{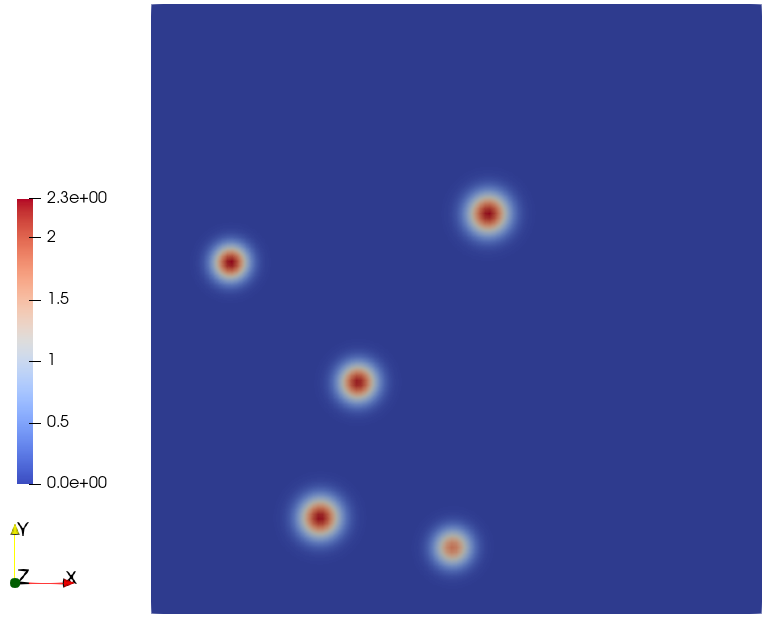}
    \caption{$f(x)$}
\end{subfigure}
\begin{subfigure}{.31\textwidth}
    \centering
    \includegraphics[width=\textwidth]{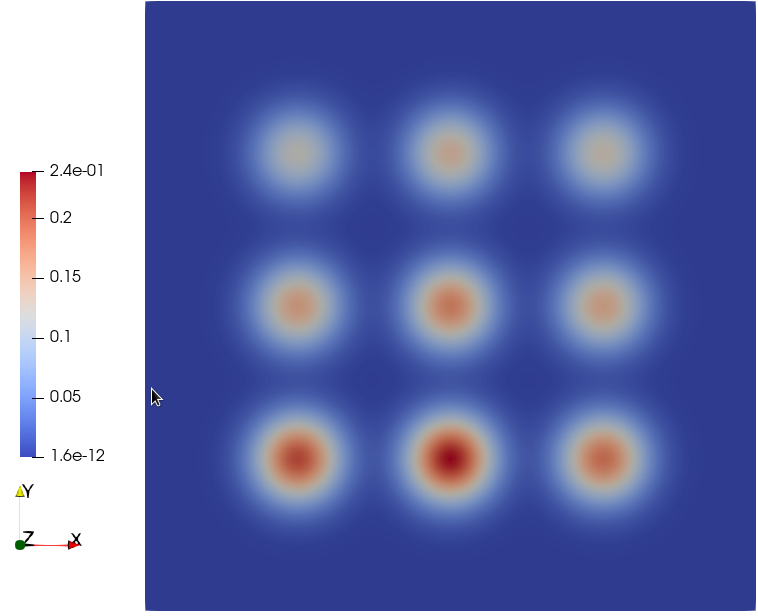}
    \caption{$B(x,z^*)$}
\end{subfigure}
\begin{subfigure}{.31\textwidth}
    \centering
    \includegraphics[width=\textwidth]{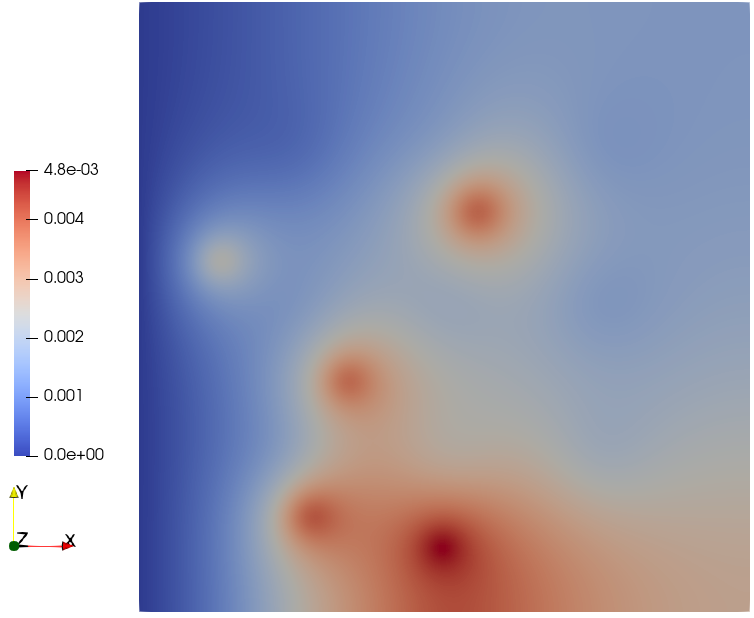}
    \caption{$u(x,z^*,\omega)$}
\end{subfigure}
\caption{Source, control and solution fields for the pollutant transport problem for $a(\omega) = 4$ and $b=5(\omega)$, and for $x \in D$}
\label{fig:pol_simulation}
\end{figure}

Fig.~\ref{fig:pol_obj_func} shows the decay of the objective function towards its final value. 
We once again observe Q-linear convergence in the optimisation counter $j$, as predicted by Theorem~\ref{thm:approx_convergence}.
In addition, we plot in~\ref{fig:fhn_grad_ratio} the gradient ratio for different iterations of the optimisation, which also decreases Q-linearly in the iteration counter $j$.
Fig.~\ref{fig:pol_cdf} shows the \gls{cdf} of the output \gls{qoi} $\qoi(z_j,\cdot)$ for different iterations $j$ of the optimisation algorithm, along with the estimated \gls{var} and \gls{cvar}.
The \gls{cdf}, the \gls{var} and the \gls{cvar} all move left as before in Section~\ref{sec:fhn}, which translates to moving the right tail of the distribution as much as possible to the left. 

\begin{figure}[h]
  \centering
    \begin{subfigure}{0.32\textwidth}
    \centering
    \includegraphics[width=\textwidth]{img/optimization_objfun_pol.png}
    \caption{Objective function decay.}
    \label{fig:pol_obj_func}
  \end{subfigure}
  \begin{subfigure}{0.32\textwidth}
    \centering
    \includegraphics[width=\textwidth]{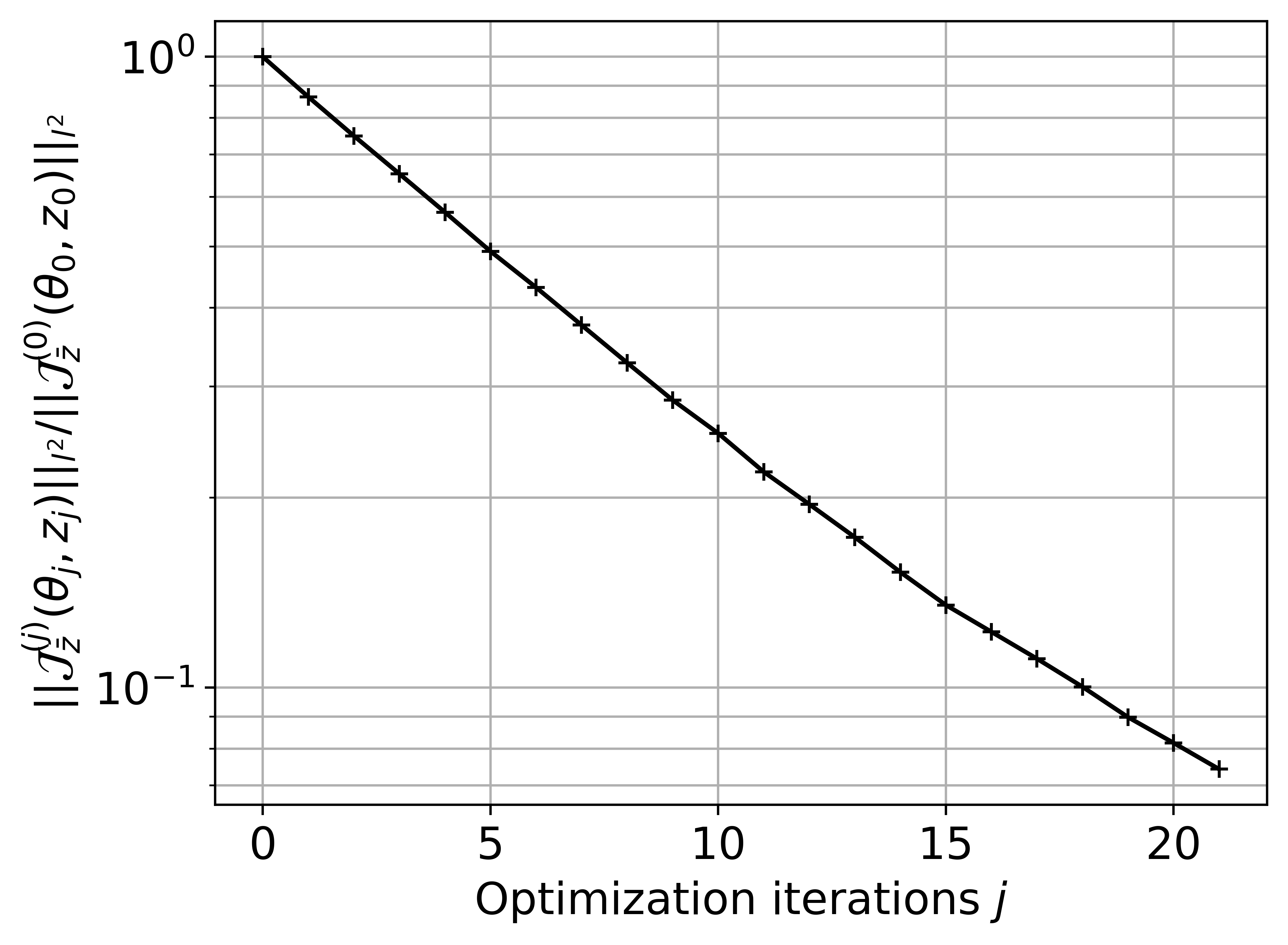}
    \caption{Gradient ratio decay.}
    \label{fig:pol_grad_ratio}
  \end{subfigure}
  \begin{subfigure}{0.32\textwidth}
    \centering
    \includegraphics[width=\textwidth]{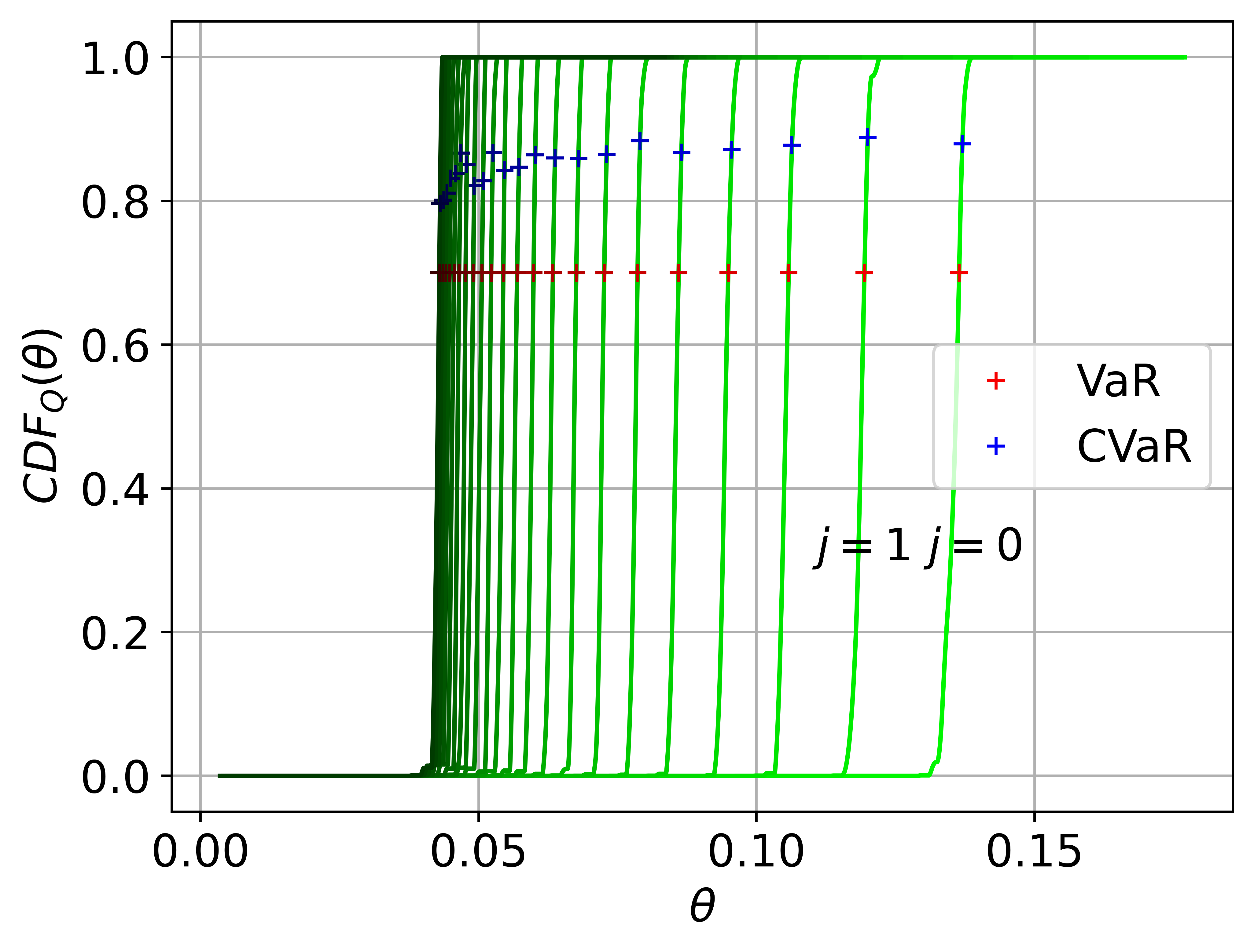}
    \caption{Change in \gls{cdf}.}
    \label{fig:pol_cdf}
  \end{subfigure}
  \caption{Optimization performance over different iterations for the pollutant transport problem}
  \label{fig:pol_opti_performance}
\end{figure}

Fig.~\ref{fig:pol_hierarchy} shows the optimal hierarchy produced by the \gls{cmlmc} algorithm at each iteration of the optimisation for a given tolerance.
Similar to before, we observe that the optimally tuned hierarchy increases in size for later optimisation iterations, since the tolerance supplied to the \gls{cmlmc} is a fraction of the gradient magnitude.
Fig.~\ref{fig:pol_opti_complexity} shows the cumulative cost as defined in Section~\ref{sec:fhn} for a given gradient magnitude.
We observe once again that the cumulative cost grows as $\norm{\objestgrad(w_j)}_{\sltwo}^{-2}$ after an initial pre-asymptotic regime, as is to be expected for the use of an optimally tuned \gls{mlmc} hierarchy at each iteration, tuned to obtain a tolerance proportional to $\norm{\objestgrad(w_j)}_{\sltwo}$.

\begin{figure}[h]
  \centering
  \begin{subfigure}{0.48\textwidth}
    \centering
    \includegraphics[width=\textwidth]{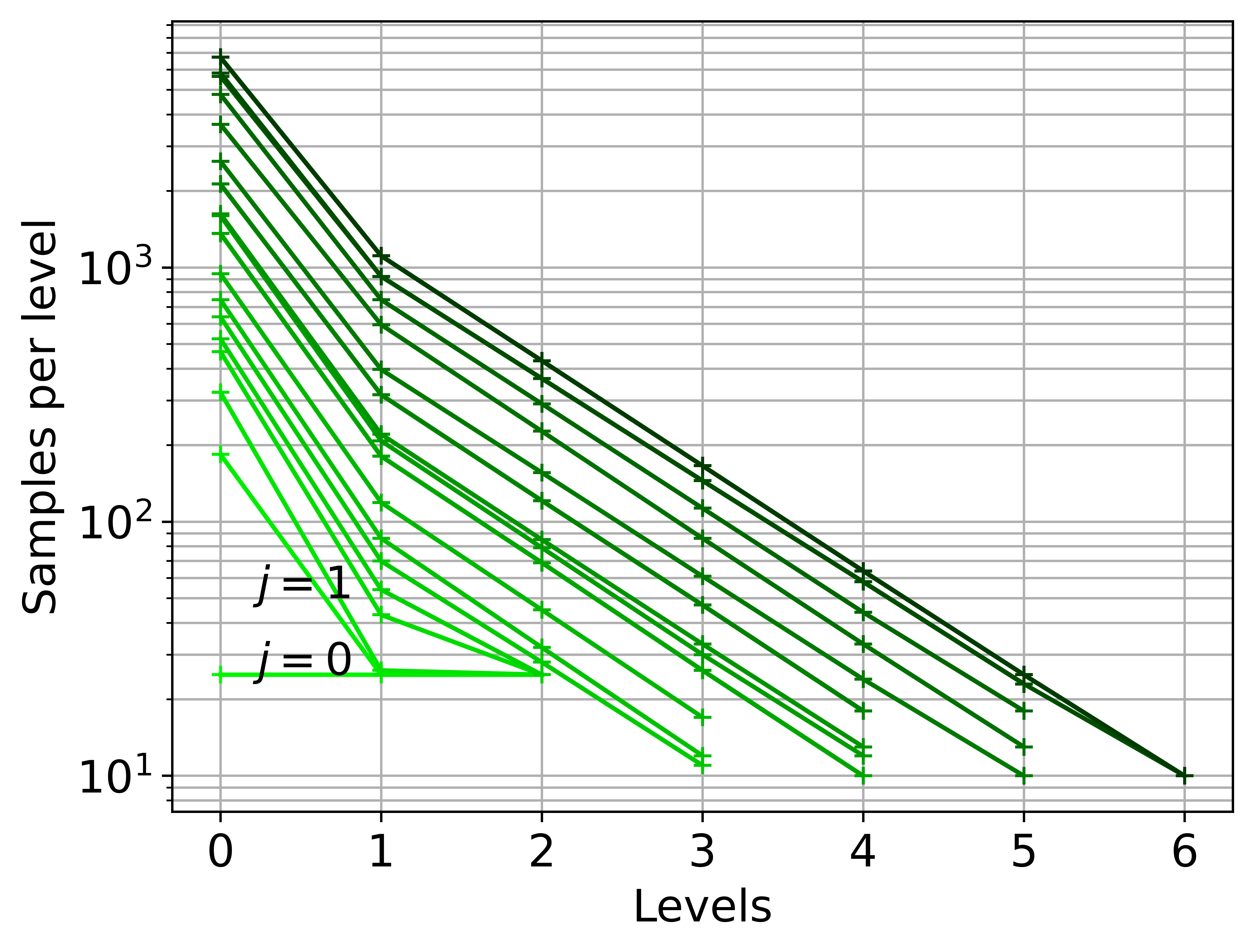}
	\caption{Level-wise sample sizes}
    \label{fig:pol_hierarchy}
  \end{subfigure}
  \begin{subfigure}{0.48\textwidth}
    \centering
    \includegraphics[width=\textwidth]{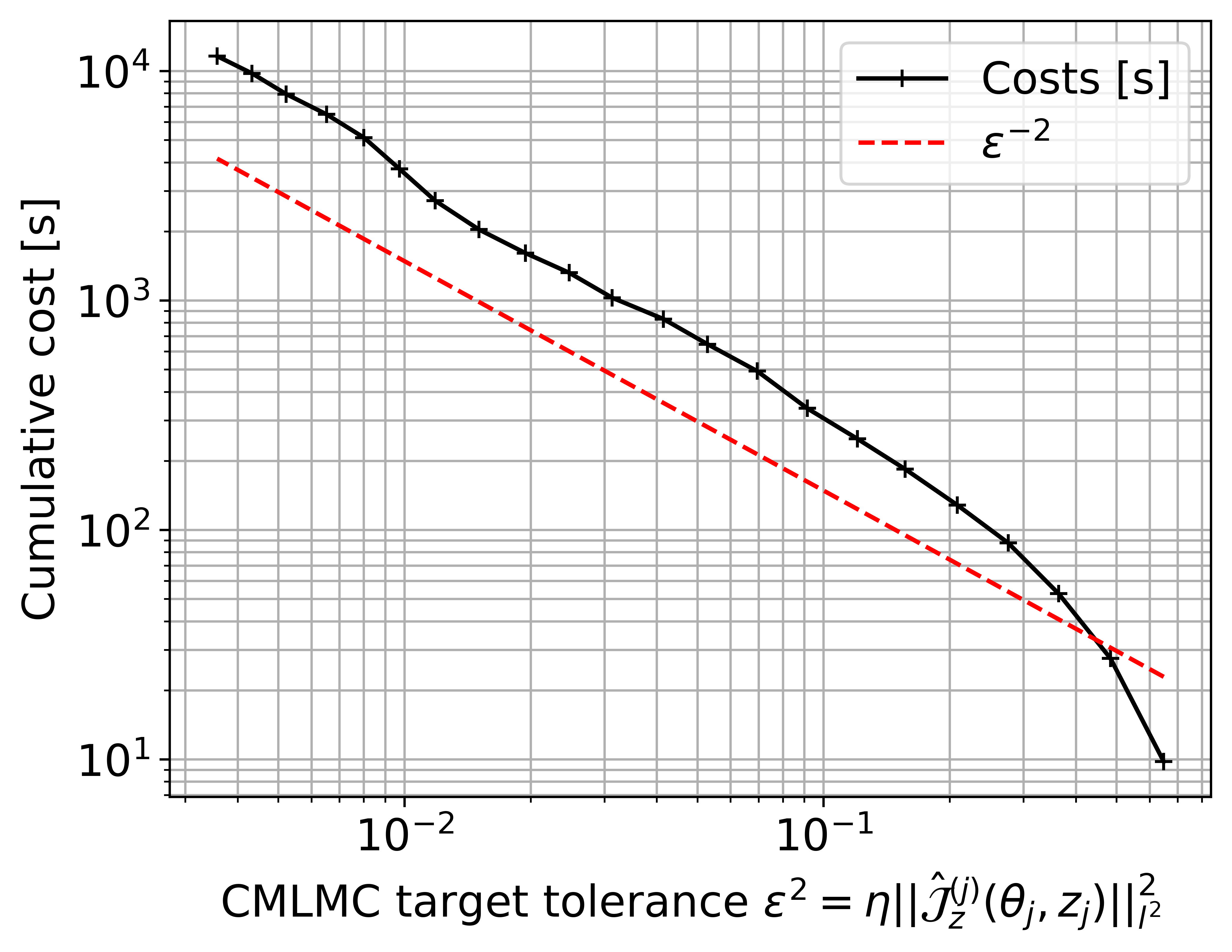}
	\caption{Cumulative cost}
    \label{fig:pol_opti_complexity}
  \end{subfigure}
  \caption{Hierarchy and complexity behaviour for different iterations for the pollutant transport problem}
  \label{fig:pol_comp_performance}
\end{figure}

We now wish to study the performance of the \gls{amgd} algorithm for different significances $\tau$.
To this end, we compare the performance of the algorithm for $\tau=0.7$ and $\tau=0.9$.
Since the performance of the algorithm in terms of objective function and gradient decay in the $\tau=0.9$ case are nearly identical to the performance observed in Fig.~\ref{fig:pol_comp_performance} for the $\tau=0.7$, the corresponding results are not presented here.
Fig.~\ref{fig:pol_hierarchy_sig} shows the optimal hierarchy produced by the \gls{cmlmc} algorithm at each optimisation iteration for the two significances tested.
We observe that the level-wise sample sizes $N_l$ decay at the same rate in the levels $l$ for both tested significances, however with a larger constant for the $\tau=0.9$ case.
Additionally, Fig.~\ref{fig:pol_opti_complexity_sig} shows the cumulative cost for a given gradient magnitude, for both significances.
We observe that the cumulative cost grows as $\norm{\objestgrad(w_j)}_{\sltwo}^{-2}$ in both cases, following an initial pre-asymptotic regime.
However, the $\tau=0.9$ case shows a larger constant.
In this case, the interval $\Theta$ is changed with each optimisation iteration such that it is centered on the quantile estimate corresponding to the previous optimisation iteration.
It can be shown, for the case of a simple Monte Carlo estimator, that the constant is expected to scale in this case as $(1-\tau)^{-1}$.
The interested reader is referred to \cite{Ganesh2022a} for further discussion.
We note that we observe a similar scaling in this case.

\begin{figure}[h]
  \centering
  \begin{subfigure}{0.48\textwidth}
    \centering
    \includegraphics[width=\textwidth]{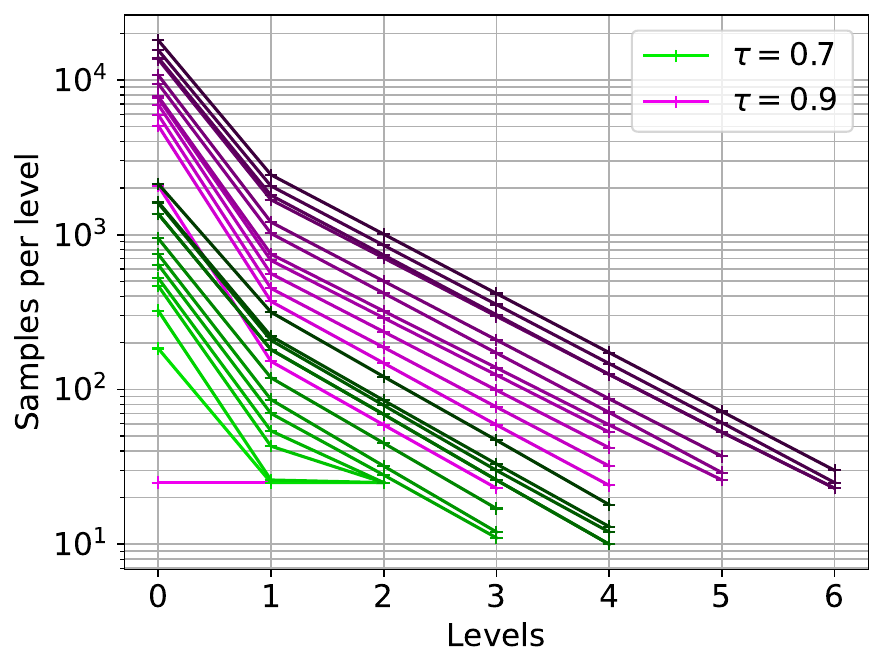}
    \caption{Level-wise sample sizes}
    \label{fig:pol_hierarchy_sig}
  \end{subfigure}
  \begin{subfigure}{0.48\textwidth}
    \centering
    \includegraphics[width=\textwidth]{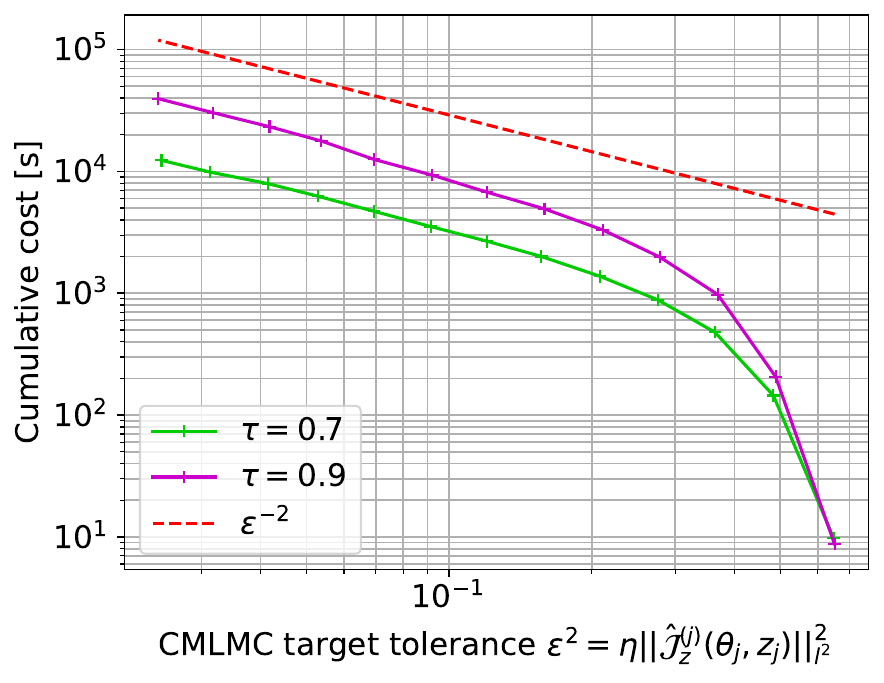}
    \caption{Cumulative cost}
    \label{fig:pol_opti_complexity_sig}
  \end{subfigure}
  \caption{Hierarchy and complexity behaviour for different significances for pollutant transport problem}
  \label{fig:pol_comp_performance_sig}
\end{figure}

\FloatBarrier

\section{Conclusions}
The aim of this work was to tackle the challenge of minimising the \gls{cvar} of a random \gls{qoi}, typically the output of a differential model with random inputs, over a suitable design space, using gradient-based optimisation techniques. 
A main challenge in utilising gradient-based techniques was the differentiability of the \gls{cvar} in terms of the design variables.
A differentiability result was presented in Section~\ref{sec:prob_form}, which was a generalisation of the one presented in \cite{hong2009simulating}, showing that gradient-based algorithms could still be used to directly minimise the \gls{cvar} without requiring smoothing.

The expression for the sensitivities of the \gls{cvar} with respect to design parameters required the computation of expectations of discontinuous functions of the \gls{qoi}; namely, the indicator function.
Estimating this expectation naively using \gls{mlmc} estimators could become impractically expensive, and possibly result in non-optimal complexity behaviour of the corresponding \gls{mlmc} estimator.
A similar issue was discussed and tackled in \cite{Ganesh2022a}, and an alternative was proposed using the framework of parametric expectations.
We presented a modified expression for the sensitivities of the \gls{cvar}, based on derivatives of parametric expectations, thereby allowing us to use the work in \cite{Ganesh2022a}.
Based on this modification, we also presented a novel optimisation algorithm consisting of an alternating minimisation-gradient procedure.
We demonstrated a theoretical result that, under additional assumptions on the combined objective function in Eq.~\eqref{eq:opt_form_combined}, the novel algorithm would achieve Q-linear convergence of the design iterates towards the optimal design in the optimisation iterations.

To enable the use of the work in \cite{Ganesh2022a}, we presented modifications of the \gls{mlmc} estimator, the error estimation procedure and adaptive hierarchy selection procedure specific to computing the sensitivities of the \gls{cvar}.
Namely, a relation was derived between the \gls{mse} of the sensitivities and the \gls{mse} of the parametric expectations in Section~\ref{sec:grad_est_mlmc}.
In addition, a modification of the \gls{kde} smoothing procedure presented in \cite{Ganesh2022a} was presented, specific to \gls{cvar} minimisation. 
The combination of the \gls{mse} relation and \gls{kde} modification allowed us to trivially extend the error estimation and hierarchy adaptivity procedure of \cite{Ganesh2022a} to the current application. 
Lastly, a minor modification of the \gls{cmlmc} procedure of \cite{Ganesh2022a} was presented in Algorithm~\ref{alg:opti_cmlmc}, wherein the \gls{cmlmc} was restarted from the optimal hierarchy of the previous design iterate.

The combination of gradient-based optimisation and \gls{mlmc} estimation of the sensitivities of the \gls{cvar} was tested on two problems of practical relevance; namely the FitzHugh--Nagumo oscillator and a more applied problem of advection-reaction-diffusion problem used to model pollutant transport.
In both cases, it was observed that the novel error estimation procedure provided tight bounds on the \gls{mse} of the gradient as defined in Eq.~\eqref{eq:grad_error_def}.
In addition, the \gls{cmlmc} algorithm was shown to produce the best-case complexity behaviour for the \gls{mlmc} estimators of the sensitivities.
The \gls{ouu} algorithm was shown to converge Q-linearly in the optimisation iterations, while also preserving the best case \gls{mlmc} cost complexity.

The numerical examples considered in this work demonstrated that the \gls{amgd} procedure performs well for the cases presented here.
However, one may wish to improve on the performance of the algorithm by considering alternatives to the \gls{amgd} algorithm.
Such variations could, for example, include higher-order optimisation methods such as the Newton method.
It still remains to be seen whether higher-order methods can directly be used with objective functions of the type in problem~\eqref{eq:opt_form_combined}, as well as whether the framework of parametric expectations can be combined with such an algorithm. 
We plan to explore such questions in future works. 

 \section*{Acknowledgments}
 This project has received funding from the European Union's Horizon
 2020 research and innovation programme under grant agreement
 No. 800898 and the King Abdullah University of Science and Technology
 (KAUST) Office of Sponsored Research (OSR) under Award
 No. OSR-2019-CRG8-4033.

\begin{appendices}
\section{Proof of Theorem~\ref{thm:q_t_diff}} \label{sec:proof-q-t-diff}
To prove Theorem~\ref{thm:q_t_diff} on the Fr\'echet differentiability of the objective function $\obj(\theta,z)$, we first prove an important result in Lemma~\ref{lemma:dom_conv}.
We recall that $\Gamma \subset \lp(\Omega,\setR)$ is the set of $\lp$-integrable random variables whose measures are atom-free.
\begin{lemma}\label{lemma:dom_conv}
Consider random variables $Y \in \Gamma \subset \lp(\Omega, \setR)$ and $\delta Y \in \lp(\Omega, \setR)$.
We then have the following:
\begin{align}
\lim_{\norm{\delta Y}_{\lp} \to 0} \expec{\indicator_{\{0 \leq Y \leq -\delta Y\}}} &= 0, \label{eq:lem_pos}\\
\text{and}\quad \lim_{\norm{\delta Y}_{lp} \to 0} \expec{\indicator_{\{-\delta Y \leq Y \leq 0\}}} &= 0.\label{eq:lem_neg}
\end{align}
\begin{proof}
We begin with the proof for Eq.~\eqref{eq:lem_pos}, since the proof for Eq.~\eqref{eq:lem_neg} follows from identical arguments.
We make use of the following result; for any $X \in \lp(\Omega, \setR)$, the following holds for any $\epsilon > 0$ and $p \geq 0$:
\begin{align}
\expec{\indicator_{\{\abs{X} \geq \epsilon\}}} \leq \expec{\frac{\abs{X}^p}{\epsilon^p}} = \norm{X}^p_{\lp} \epsilon^{-p}.
\end{align}
Setting $\epsilon = \norm{X}_{\lp}^{\beta}$ for some $\beta \in [0,1)$, we have that 
\begin{align}
\expec{\indicator_{\{\abs{X} \geq \norm{X}_{\lp}^{\beta}\}}} \leq \norm{X}^{p-\beta p}_{\lp} = \norm{X}^{\gamma}_{\lp},
\end{align}
where $\gamma \coloneqq p (1-\beta )$.
We rewrite the term within the limit in Eq.~\eqref{eq:lem_pos} as follows:
\begin{align}
\expec{\indicator_{\{0 \leq Y \leq -\delta Y\}}} &= \expec{\indicator_{\{0 \leq Y \leq -\delta Y\}} \left(\indicator_{\{\abs{\delta Y} < \norm{\delta Y}_{\lp}^{\beta}\}} + \indicator_{\{\abs{\delta Y} \geq \norm{\delta Y}_{\lp}^{\beta}\}} \right)} \\
&= \expec{\indicator_{\{0 \leq Y \leq -\delta Y\}} \indicator_{\{\abs{\delta Y} < \norm{\delta Y}_{\lp}^{\beta}\}} }+\expec{\indicator_{\{0 \leq Y \leq -\delta Y\}} \indicator_{\{\abs{\delta Y} \geq \norm{\delta Y}_{\lp}^{\beta}\}} }.\label{eq:two_terms}
\end{align}
The first term can be bounded as follows:
\begin{align}
\expec{\indicator_{\{0 \leq Y \leq -\delta Y\}} \indicator_{\{\abs{\delta Y} < \norm{\delta Y}_{\lp}^{\beta}\}} } & \leq \expec{\indicator_{\{0 \leq Y \leq \norm{\delta Y}_{\lp}^{\beta}\}} }
\end{align}
Due to dominated convergence, we can pass the limit into the expectation, resulting in the following:
\begin{align}
\lim_{\norm{\delta Y}_{\lp} \to 0} \expec{\indicator_{\{0 \leq Y \leq \norm{\delta Y}_{\lp}^{\beta}\}} } &= \expec{\lim_{\norm{\delta Y}_{\lp} \to 0}\indicator_{\{0 \leq Y \leq \norm{\delta Y}_{\lp}^{\beta}\}}} = \expec{\indicator_{\{ Y = 0 \}}} =0,
\end{align}
since $Y \in \Gamma$ is atom-free. 
The second term can be bounded as follows, where we use a H\"older inequality:
\begin{align}
\expec{\indicator_{\{0 \leq Y \leq -\delta Y\}} \indicator_{\{\abs{\delta Y} \geq \norm{\delta Y}_{\lp}^{\beta}\}} } &\leq \norm{\indicator_{\{0 \leq Y \leq -\delta Y\}}}_{\linf} \norm{\indicator_{\{\abs{\delta Y} \geq \norm{\delta Y}_{\lp}^{\beta}\}}}_{\lebesgue^1} \\
&\leq \expec{\indicator_{\{\abs{\delta Y} \geq \norm{\delta Y}_{\lp}^{\beta}\}}} \\
&\leq \norm{\delta Y}^{\gamma}_{\lp}.
\end{align}
Hence, we have that the second term in Eq.~\eqref{eq:two_terms} goes to zero as well with the application of the limit, thus concluding the proof for Eq.~\eqref{eq:lem_pos}. 
The proof for Eq.~\eqref{eq:lem_neg} follows from identical arguments.
\end{proof}
\end{lemma}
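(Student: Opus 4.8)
The plan is to prove Eq.~\eqref{eq:lem_pos} in full, after which Eq.~\eqref{eq:lem_neg} follows by an entirely symmetric argument (applying the result to $-Y$, which is again atom-free). The central difficulty is that $\norm{\delta Y}_{\lp} \to 0$ gives only $\lp$-convergence of $\delta Y$ to $0$ and \emph{not} almost-sure convergence, so one cannot simply pass the limit inside $\expec{\indicator_{\{0 \leq Y \leq -\delta Y\}}}$ via dominated convergence: the random upper endpoint $-\delta Y$ need not converge pointwise. The key idea is to replace this random threshold by a deterministic one on the bulk of the sample space, and to dispatch the remainder by a Markov-type tail estimate.

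First I would note that on the event $\{0 \leq Y \leq -\delta Y\}$ one necessarily has $\delta Y \le 0$, hence $-\delta Y = \abs{\delta Y}$, so that $\indicator_{\{0 \leq Y \leq -\delta Y\}} \le \indicator_{\{0 \leq Y \leq \abs{\delta Y}\}}$; it thus suffices to bound the expectation of this symmetric indicator. The main tool is the elementary Markov inequality: for any $X \in \lp(\Omega,\setR)$ and $\epsilon > 0$, $\measure(\abs{X} \ge \epsilon) \le \epsilon^{-p}\norm{X}_{\lp}^{p}$. I would apply it to $X = \delta Y$ with the scale-adapted threshold $\epsilon = \norm{\delta Y}_{\lp}^{\beta}$ for a fixed $\beta \in (0,1)$, obtaining $\measure(\abs{\delta Y} \ge \norm{\delta Y}_{\lp}^{\beta}) \le \norm{\delta Y}_{\lp}^{p(1-\beta)}$, where the exponent $p(1-\beta)$ is strictly positive.

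Next I would split according to whether $\abs{\delta Y}$ lies below or above this deterministic threshold:
\begin{align*}
\expec{\indicator_{\{0 \leq Y \leq \abs{\delta Y}\}}}
= \expec{\indicator_{\{0 \leq Y \leq \abs{\delta Y}\}}\indicator_{\{\abs{\delta Y} < \norm{\delta Y}_{\lp}^{\beta}\}}}
+ \expec{\indicator_{\{0 \leq Y \leq \abs{\delta Y}\}}\indicator_{\{\abs{\delta Y} \ge \norm{\delta Y}_{\lp}^{\beta}\}}}.
\end{align*}
On the first event the random upper bound is dominated by the deterministic one, so the first term is at most $\expec{\indicator_{\{0 \le Y \le \norm{\delta Y}_{\lp}^{\beta}\}}}$; since $\norm{\delta Y}_{\lp}^{\beta} \to 0$ \emph{deterministically} as $\norm{\delta Y}_{\lp} \to 0$, the integrands are dominated by the integrable constant $1$ and decrease pointwise to $\indicator_{\{Y=0\}}$, whence dominated convergence yields the limit $\expec{\indicator_{\{Y=0\}}} = \measure(Y=0) = 0$, using that $Y \in \Gamma$ is atom-free. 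The second term is bounded crudely by $\measure(\abs{\delta Y} \ge \norm{\delta Y}_{\lp}^{\beta}) \le \norm{\delta Y}_{\lp}^{p(1-\beta)} \to 0$. Combining the two bounds establishes Eq.~\eqref{eq:lem_pos}, and applying the same reasoning to $-Y$ in place of $Y$ gives Eq.~\eqref{eq:lem_neg}.

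I expect the delicate point to be the handling of the first term: one must ensure the random band $[0,\abs{\delta Y}]$ is genuinely contained in a \emph{non-random} band $[0,\norm{\delta Y}_{\lp}^{\beta}]$ so that dominated convergence may be applied to a family of sets shrinking to $\{Y=0\}$. The two-region decomposition governed by the exponent $\beta$ is exactly the device that converts the $\lp$-control of $\delta Y$ into this deterministic shrinking of the band, while the atom-freeness of $Y$ is what forces the limiting probability to vanish.
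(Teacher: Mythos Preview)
Your proposal is correct and follows essentially the same route as the paper: the same Markov-inequality tail bound with threshold $\norm{\delta Y}_{\lp}^{\beta}$, the same two-region decomposition, dominated convergence plus atom-freeness on the ``small $\abs{\delta Y}$'' part, and the Markov tail estimate on the ``large $\abs{\delta Y}$'' part. Your preliminary reduction to $\abs{\delta Y}$ and the remark that Eq.~\eqref{eq:lem_neg} follows by applying the argument to $-Y$ are minor clarifications, but the substance is identical to the paper's proof.
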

We now use the above result and present a proof of Theorem~\ref{thm:q_t_diff}.
We note that the function $\risk(\theta, \qoi)$ is a composition of two functions.
We define the functions $l_1 : \Gamma \to \setR$ and $l_2 : \setR \times \Gamma \to \Gamma$ as follows:
\begin{align}
l_1(Y) &\coloneq \expec{Y^+}\\
l_2(\theta, \qoi) &\coloneqq \qoi - \theta,\\
\implies \risk(\theta, \qoi) &= \theta + \frac{l_1 \circ l_2 (\theta, \qoi)}{1-\tau}.
\end{align}
Hence, to show that $\risk$ is Fr\'echet differentiable, it suffices to show that each of the functions $l_1$ and $l_2$ are Fr\'echet differentiable. 

It is straightforward to see that $l_2$ is Fr\'echet differentiable (being linear and bounded) with Fr\'echet derivative $Dl_2(\theta, \qoi)$ in the direction $(\delta \theta, \delta \qoi) \in \setR \times \lp(\Omega, \setR)$ given by:
\begin{align}
Dl_2(\theta, \qoi)(\delta \theta, \delta \qoi) = \delta \qoi - \delta \theta.
\end{align}
The Fr\'echet derivative of $l_1$ however, requires some consideration. 
We argue that the Fr\'echet derivative of $l_1$ exists at any point $Y \in \Gamma$ and is given by $Dl_1(Y)(\delta Y) = \expec{\indicator_{\{Y \geq 0\}} \delta Y}$.
To prove this statement, we must verify the following limit:
\begin{align}
\lim_{\norm{\delta Y}_{\lp}\to 0 } \frac{\abs{\expec{(Y+\delta Y)^+} - \expec{Y^+}-\expec{\indicator_{\{Y \geq 0\}}\delta Y}}}{\norm{\delta Y}_{\lp}} = 0 \label{eq:limit}
\end{align}
To show the above, we begin by re-writing the numerator as follows:
\begin{align}
\expec{(Y+\delta Y)^+ - Y^+ - \indicator_{\{Y \geq 0\}}\delta Y} &= \expec{\delta Y\indicator_{\{Y+\delta Y \geq 0, Y \geq 0\}}-\indicator_{\{Y \geq 0\}}\delta Y}\nonumber\\
&+ \expec{(Y+\delta Y)\indicator_{\{Y+\delta Y \geq 0, Y < 0\}}}\nonumber\\
&- \expec{Y \indicator_{\{Y+\delta Y < 0, Y \geq 0\}}}.\label{eq:three_split}
\end{align}
Inserting Eq.~\eqref{eq:three_split} into Eq.~\eqref{eq:limit}, we have the following:
\begin{align}
\frac{\abs{\expec{(Y+\delta Y)^+} - \expec{Y^+}-\expec{\indicator_{\{Y \geq 0}\delta Y\}}}}{\norm{\delta Y}_{\lp}} \leq \frac{T_1 + T_2 + T_3}{\norm{\delta Y}_{\lp}},\label{eq:three_bound}
\end{align}
with the terms $T_1$, $T_2$ and $T_3$ given by:
\begin{align}
T_1 &\coloneqq \abs{\expec{\delta Y\indicator_{\{Y+\delta Y \geq 0, Y \geq 0\}}-\delta Y \indicator_{\{Y \geq 0\}}}},\\
T_2 &\coloneqq \abs{\expec{(Y+\delta Y)\indicator_{\{Y+\delta Y \geq 0, Y < 0\}}} },\\
T_3 &\coloneqq \abs{ \expec{Y\indicator_{\{Y+\delta Y < 0, Y \geq 0\}}}}.
\end{align}
We then begin with the term $T_1$.
We first note that $T_1$ can be rewritten in the following manner:
\begin{align}
T_1 &= \abs{- \expec{\delta Y \indicator_{\{0 \leq Y < -\delta Y\} }}} \leq \expec{\abs{\delta Y}\indicator_{\{0 \leq Y < -\delta Y \}}}\\
&\leq \norm{\delta Y}_{\lp} \norm{\indicator_{\{0 \leq Y < -\delta Y \}}}_{\ellq} = \norm{\delta Y}_{\lp} \expec{\indicator_{\{0 \leq Y < -\delta Y \}}}^{1/q},\\
&\leq \norm{\delta Y}_{\lp} \expec{\indicator_{\{0 \leq Y \leq -\delta Y \}}}^{1/q}.\label{eq:t1_bound}
\end{align}
The term $T_2$ can be bounded as follows:
\begin{align}
T_2 &= \abs{\expec{(Y+\delta Y) \indicator_{\{Y+\delta Y \geq 0\}} \indicator_{\{Y < 0\}}}} \leq \expec{|\delta Y| \indicator_{\{-\delta Y \leq Y < 0\}} },\\
&\leq \norm{\delta Y}_{\lp} \norm{\indicator_{\{-\delta Y \leq Y < 0\}}}_{\ellq} = \norm{\delta Y}_{\lp} \expec{\indicator_{\{-\delta Y \leq Y \leq 0\}}}^{1/q}. \label{eq:t2_bound}
\end{align}
Similarly, the term $T_3$ can be bounded as follows:
\begin{align}
T_3 &= \abs{\expec{Y \indicator_{\{Y+\delta Y < 0\}} \indicator_{\{Y \geq 0\}}} } \leq  \expec{ \abs{\delta Y} \indicator_{\{0 \leq Y< -\delta Y\}} }\\
&\leq \norm{\delta Y}_{\lp} \norm{\indicator_{\{0 \leq Y \leq -\delta Y\}} }_{\ellq}= \norm{\delta Y}_{\lp} \expec{\indicator_{\{0 \leq Y \leq -\delta Y\}}}^{1/q}.\label{eq:t3_bound}
\end{align}
Inserting Eqs.~\eqref{eq:t1_bound}, \eqref{eq:t2_bound} and~\eqref{eq:t3_bound} into Eq.~\eqref{eq:three_bound}, and applying the limit using Lemma~\ref{lemma:dom_conv}, we have that:
\begin{align}
\lim_{\norm{\delta Y}_{\lp}\to 0 } \frac{\abs{\expec{(Y+\delta Y)^+} - \expec{Y^+}-\expec{\indicator_{\{Y \geq 0\}}\delta Y}}}{\norm{\delta Y}_{\lp}} = 0.
\end{align}
This concludes the proof.

\section{Adjoint of first-order \texorpdfstring{\glsname{ode}}{ODE} with additive noise}
\label{sec:proof-oscillator}
We present here the derivation of the adjoints for a first-order \gls{ode} with white noise forcing for an objective function containing the \gls{cvar} of a time-averaged quantity of the trajectory. 
Let $(\Omega, \mathcal{F}, \measure)$ be a complete probability space, $\omega \in \Omega$ denote an elementary random event, and $z \in \setR^d$ the set of design variables.
Let $u(t, z, \omega) \in U \subset \setR^{N_u}$ be the state vector at time $t \in[0,T]$ for a given random input $\omega$ and design $z$. The state vector $u$ is governed by the following \gls{ode} with additive noise.
\begin{align}
  \dot{u}(t,z,\omega) &= g(u,z) + \tau \dot{W}(t,\omega) \quad\text{over } (0,T],\\
  u(0, z,\omega) &= u^0,
\end{align}
where $g : U \times \setR^d \to \setR^{N_u}$, and \(W:[0,T] \times \Omega \to\setR^{N_u}\) is a $N_u$-dimensional standard Wiener process.

We discretise the problem on a uniform temporal grid \(\mathbb{T}\) where the interval \([0,T]\) is divided into \(N\in\setN\) segments of step size $\Delta t = T/N$, \(\mathbb{T}\coloneqq \{t_{n}\coloneqq n\Delta t : n\in\Zint{0}{N_{l}}\}\).
The \gls{ode} is discretised using the Euler--Maruyama scheme, which reads as follows:
\begin{align*}
  u^{n+1} &= u^{n} + \Delta t g(u^n, z) + \tau \sqrt{\Delta t} \xi^n,\\
  u^0 &= u_0,
\end{align*}
where $u^n$ denotes the approximation to $u(t_n, z,\omega)$, $\xi^n \in \setR^{N_u}$ are $N_u$-dimensional random vectors whose components are independent identically distributed standard normal variables. 
We are interested in computing the statistics of time-averages of functions of the trajectory. 
\begin{align}
  \qoi = \tavg[T]{f(u)}.
\end{align}
We approximate the time integral using the trapezoid rule on the aforementioned temporal grid, leading to 
\begin{align}
  \qoi(z,\omega) \approx \qoi_h(z,\omega) \coloneqq \sum_{n=0}^{N-1} \left( \frac{f(u^n) + f(u^{n+1})}{2} \right) \frac{\Delta t}{T}.
\end{align}
We are interested in minimising the \gls{cvar} of this quantity over the parameters $z$ but use the combined formulation in Eq.~\eqref{eq:opt_form_combined}.
The corresponding Lagrangian for the problem reads 
\begin{align}
  \lagrangian(\theta,z,\{u^n\},\{\lambda^n\}) = \theta + \frac{\expec{(\qoi(z,\cdot)-\theta)^{+}}}{1-\tau} + \expec{\sum_{n=0}^{N-1}\lambda^{n+1}\left(u^{n} + \Delta t g^n + \tau \sqrt{\Delta t} \xi^n - u^{n+1} \right) - \lambda^0(u^0-u_0)},
\end{align}
where we use $g^n \coloneqq g(u^n,z)$, and $\lambda^{n} \in \setR^{N_u}$, $n\in\Zint{0}{N}$ denote the Lagrange multipliers for the initial condition and the steps of the discretised equations. 

Differentiating with respect to $z$ gives
\begin{align}
  \deriv{\lagrangian}{z} &= \begin{multlined}[t]
    \expec{ \frac{\indicator_{\qoi_h\geq \theta}}{(1-\tau)T} \sum_{n=0}^{N-1} \left(\frac{f^n_u u^n_z + f^{n+1}_u u^{n+1}_z}{2}\right) \Delta t } \\
    + \expec{\sum_{n=0}^{N-1}\lambda^{n+1}\left(u_z^{n} + \Delta t (g_u^n u_z^n + g_z^n) - u_z^{n+1} \right)}
  \end{multlined} \\
  & \eqqcolon \expec{\hat{\lagrangian}}.
\end{align}
Re-arranging the terms leads to 
\begin{multline}
  \hat{\lagrangian} = u_z^0 \left[ \lambda^1 (1+\Delta t g_u^0) +\frac{\indicator_{\qoi_h\geq \theta}}{(1-\tau)T} \frac{f_u^0 \Delta t}{2} \right] + \Delta t \lambda^1 g_z^0 + u_z^N \left[\frac{\indicator_{\qoi_h\geq \theta}}{(1-\tau)T}\frac{f_u^N \Delta t}{2}-\lambda^N\right]\\
  + \sum_{n=1}^{N-1} u_z^n \left[ \lambda^{n+1}(1+\Delta t g_u^n)-\lambda^n + \frac{\indicator_{\qoi_h\geq \theta}}{(1-\tau)T} \Delta t f_u^n \right] + \Delta t \lambda^{n+1} g_z^n,
\end{multline}
where we have used the subscript notation for partial derivatives.

We have in our case that $u_z^0 = 0$. 
To remove terms dependent on $u_z^n$, we set 
\begin{align}
  \lambda^n &= \lambda^{n+1}(1+\Delta t g_u^n)+ \frac{\indicator_{\qoi_h\geq \theta}}{(1-\tau)T} \Delta t f_u^n, \quad n=1,...,N-1 \label{eq:lambda_time}\\
  \lambda^N &= \frac{\indicator_{\qoi_h\geq \theta}}{(1-\tau)T}\frac{f_u^N \Delta t}{2}.
\end{align}
This gives us the adjoint equations which are solved backwards in time. 
It is noteworthy to mention that since Eq.~\eqref{eq:lambda_time} is linear, that it can be solved for $\{\lambda^n\}$ without the factor $\frac{\indicator_{\qoi_h \geq \theta}}{(1-\tau)T}$, and equivalently, the sensitivities can be computed as:
\begin{align}
  \deriv{\lagrangian}{z} = \frac{\indicator_{\qoi_l \geq \theta}}{(1-\tau)T} \expec{ \sum_{n=0}^{N-1} \Delta t \lambda^{n+1} g_z^n}.
\end{align}
That is, setting 
\begin{align}
  \obj(\theta,z) = \theta + \frac{\expec{(\qoi_h(z,\cdot)-\theta)^{+}}}{1-\tau},
\end{align}
we have that $\obj_z(\theta,z) = \expec{ \sum_{n=0}^{N-1} \Delta t \lambda^{n+1} g_z^n}$.
\end{appendices}

\printbibliography

\end{document}